\documentclass[letter,11pt]{article}

\usepackage{authblk}
\usepackage{amsmath}
\usepackage{amssymb}
\usepackage{amsthm}
\usepackage{bm}
\usepackage{enumitem}
\usepackage{fullpage}
\usepackage{hyperref}
\hypersetup{
citecolor=black,
filecolor=black,
linkcolor=red,
urlcolor=black,
linktoc=all
}
\usepackage{lmodern}
\usepackage{mathalpha}
\usepackage{mathtools}
\usepackage{microtype}
\usepackage{accents}
\usepackage{booktabs}
\usepackage{array}
\usepackage{xcolor,colortbl}

\usepackage{tikz}
\usetikzlibrary{arrows}

\usepackage[capitalise]{cleveref}
	
\newcommand{\ie}{\textit{i.e.}}
\newcommand{\eg}{\textit{e.g.}}

\newcommand{\etc}{\textit{etc}}

\newcommand{\eps}{\varepsilon}
\newcommand{\thet}{\vartheta}

\newcommand{\R}{\mathbb{R}}

\newcommand{\T}{\mathbb{T}}

\newcommand{\Ccal}{\mathcal{C}}
\newcommand{\Kcal}{\mathcal{K}}
\newcommand{\Mcal}{\mathcal{M}}
\newcommand{\Ncal}{\mathcal{N}}
\newcommand{\Pcal}{\mathcal{P}}

\newcommand{\Tcal}{\mathcal{T}}

\newcommand{\tplus}{\vee}

\newcommand{\onevector}{\text{e}}
\DeclareMathOperator{\tsegm}{\mathrm{tsegm}}
\DeclareMathOperator{\tbar}{\mathrm{tbary}}

\newcommand{\distInf}{{d_{\infty}}}

\newcommand{\Pt}{\Pcal_t}
\newcommand{\Ptrop}{\mathsf{P}}
\newcommand{\K}{\Kcal}
\newcommand{\Kt}{\K_t}
\newcommand{\Ktrop}{\mathsf{K}}
\newcommand{\ctrop}{\mathsf{c}}
\newcommand{\cp}{\mathcal{C}}
\newcommand{\cptrop}{\mathsf{C}}
\newcommand{\cpt}{\mathcal{C}_t}

\newcommand{\Minimize}{\textrm{Minimize}}
\newcommand{\SubjectTo}{\textrm{subject to}}
\DeclareMathOperator{\linr}{linr}

\newcommand{\e}{\text{e}}
\newcommand{\transpose}[1]{#1^\top}
\newcommand{\ubar}[1]{\underaccent{\bar}{#1}}
\newcommand{\cex}{\textsc{cex}}
\newcommand{\tcex}{\textsc{tcex}}
\newcommand{\optval}{\mathrm{val}}
\newcommand{\multlow}{{\ubar m}}
\newcommand{\multupp}{{\bar m}}
\newcommand{\Mpc}{M^{\mathrm{pc}}}

\DeclareMathOperator{\Diag}{\mathrm{Diag}}
\DeclareMathOperator{\val}{\mathrm{val}}

\DeclareMathOperator{\supp}{\mathrm{supp}}

\DeclareMathOperator{\interior}{\mathrm{int}}

\DeclarePairedDelimiterX{\inp}[2]{\langle}{\rangle}{#1, #2}
\DeclarePairedDelimiterX{\abs}[1]{\lvert}{\rvert}{#1}
\DeclarePairedDelimiterX{\norm}[1]{\lVert}{\rVert}{#1}
\newcommand{\inpT}[3][]{\inp[#1]{#2}{#3}_\T}

\renewcommand{\geq}{\geqslant}
\renewcommand{\leq}{\leqslant}

\newcommand{\unitAM}{e}
\newtheorem{theorem}{Theorem}
\newtheorem{proposition}[theorem]{Proposition}
\newtheorem{lemma}[theorem]{Lemma}
\newtheorem{corollary}[theorem]{Corollary}
\newtheorem{assumption}[theorem]{Assumption}

\theoremstyle{remark}
\newtheorem{remark}[theorem]{Remark}

\title{No self-concordant barrier interior point method\\
is strongly polynomial}

\author[1,2]{Xavier Allamigeon}
\author[1,2]{Stéphane Gaubert}
\author[2,1]{Nicolas Vandame}
\affil[1]{Inria, France}
\affil[2]{CMAP, CNRS, Ecole Polytechnique, IP Paris, France}
\date{November 4, 2021}

\begin{document}

\maketitle

\begin{abstract}
It is an open question to determine if the theory of self-concordant barriers can provide an interior point method with strongly polynomial complexity in linear programming. In the special case of the logarithmic barrier, it was shown in [Allamigeon, Benchimol, Gaubert and Joswig, SIAM J.~on Applied Algebra and Geometry, 2018] that the answer is negative. In this paper, we show that \emph{none} of the self-concordant barrier interior point methods is strongly polynomial. This result is obtained by establishing that, on parametric families of convex optimization problems, the log-limit of the central path degenerates to a piecewise linear curve, independently of the choice of the barrier function. We provide an explicit linear program that falls in the same class as the Klee--Minty counterexample, \ie, in dimension $n$ with $2n$~constraints, in which the number of iterations is $\Omega(2^n)$.  
\end{abstract}

\section{Introduction}

The theory of self-concordant barriers, introduced by Nesterov and Nemirovskii~\cite{NesterovN94} in the nineties, is the cornerstone of modern interior point methods (IPM) for convex optimization. It is the basis of some of the most recent breakthroughs in the complexity of linear programming and combinatorial optimization,
like~\cite{LeeSidford14}. 
Given a convex optimization program of the form
\begin{equation}
\Minimize \enspace \inp{c}{x} \enspace \SubjectTo \enspace x \in \Kcal \, , \label{eq:convex}
\end{equation}
where $\Kcal \subset \R^n$ is a convex body and $c \in \R^n$, interior point methods based on self-concordant barriers consist in solving the following penalized problems:
\begin{equation}
\Minimize \enspace \eta \inp{c}{x} + f(x) \enspace \SubjectTo \enspace x \in \interior \Kcal \label{eq:penalized}
\end{equation}
where $f$ is a self-concordant barrier over the interior $\interior \Kcal$ of $\Kcal$, and $\eta \geq 0$ is a parameter. In more details, the function $f$ is strictly convex, and tends to $+\infty$ when $x$ goes to the boundary of $\Kcal$. Thus, every problem of the form~\eqref{eq:penalized} has a unique optimal solution $\cp(\eta)$, and the function $\eta \mapsto \cp(\eta)$ defines a parametric curve called the \emph{central path}. This curve converges to an optimal solution of the original problem~\eqref{eq:convex} when $\eta \to +\infty$. 
Self-concordant barrier interior point methods consists in approximately following the central path up to sufficiently large value of $\eta$. In a nutshell, starting from an approximation of the point $\cp(\eta)$, they perform a certain number of Newton iterations in order to get an approximation of $\cp(\eta')$, where the ratio $\eta / \eta'$ is bounded by a constant less than $1$. One of the most important results in the theory of self-concordant barriers is that the total number of iterations performed to get an $\eps$-approximation\footnote{\ie, a point $x \in \K$ such that $\inp{c}{x} \leq \mathrm{val} + \eps$, where $\mathrm{val}$ is the optimal value.} of the optimal value of~\eqref{eq:convex} is in $O(\thet_f \log (1 / \eps))$, where $\thet_f$ is the so-called \emph{complexity value} of the barrier $f$, see~\cite[\S~2.3.1]{Renegar01}. In the case of linear programming, \ie, when $\Kcal$ is a polyhedron, we seek for an exact solution of~\eqref{eq:convex}. In this case, the precision $\eps$ is set to $O(1/2^{2L})$, where $L$ is the bitsize of the input (\ie, the total bitsize of the numerical entries of the linear program), after which a ``rounding'' method can be applied to find an exact solution in the boundary of the polyhedron, starting from the $\eps$-approximation point that lies in the interior. Then, the complexity bound reduces to $O(\thet_f \log L)$.

Improving the complexity value $\thet_f$ of the barrier as well as the complexity of every Newton iteration has received a considerable attention over the last few years (see~\cite{BubeckEldan15, LeeSidford14, LeeYue18, LeeSidford19,Brand} for a nonexhaustive list). While this has led to significant improvements in the complexity of linear programming, these contributions cover only one part of the complexity bound of interior point methods. Making any substantial progress on the understanding of the number of iterations performed for general self-concordant barriers is a notorious open problem. The main motivation is that the number of iterations is the key factor to possibly get an algorithm that solves linear programming within strongly polynomial complexity. Recall that a \emph{strongly polynomial algorithm} is a polynomial time algorithm that uses a number of arithmetic operations bounded only by a polynomial in the \emph{number of numerical inputs} (rather than the bitsize of these inputs). Finding a strongly polynomial algorithm for linear programming is known as Smale's ninth problem~\cite{Smale98}, and is one of the major open problems in computational optimization. 

\paragraph{Contributions.} In this paper, we show that no interior point method based on self-concordant barrier has strongly polynomial complexity. In more details, we consider the following linear optimization problem over a $n$-dimensional polytope defined by $2n$ inequalities
\begin{equation}\label{eq:cex} 
\allowdisplaybreaks
\begin{array}{rl}
\Minimize & x_n \\
\SubjectTo &
\begin{aligned}[t]
t^{-u_i} \Big(\sum_{j = 1}^{i-1} x_j\Big) + t^{-u_{i+1} + 1} x_i & \leq \sum_{j = i+1}^{n-1} t^{-u_j} x_j + x_n + t^{-u_n} && \text{for all}\enspace i = 1, \dots, n-1\\
\sum_{j = 1}^n x_j & \leq 1 \\
0 \leq x_1 \leq x_2 & \leq \dots \leq x_{n-1} \leq t^{u_n} x_n \, ,
\end{aligned} 
\end{array} \tag{$\cex_n(t)$}
\end{equation}
where $u_k \coloneqq 3 \cdot 2^{k-2} - 1$ for all $k \geq 1$, and $t > 1$ is a real parameter. Note that, in the case of~\eqref{eq:cex}, the input bitsize $L$ is $\Omega(u_n \log_2 t)$.

This is the main result of the paper. 
\begin{theorem}\label{th:cex_intro}
When $t \gg 1$, every self-concordant barrier interior point method requires at least $2^n - 1$ iterations to reduce the value of the objective function from $\Omega(1)$ to $1/2^{2 L}$ in the linear program~\eqref{eq:cex}.
\end{theorem}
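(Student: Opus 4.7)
My plan rests on analyzing the log-limit (tropicalization) of the central path, which is asserted in the abstract to converge to a piecewise linear curve independently of the barrier. The target is to turn this structural statement into a lower bound on the number of Newton iterations.

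First, I would set up the log-log coordinates $\lambda = \log_t \eta$ and $x(\lambda) = \log_t \cp_t(t^\lambda)$, where $\cp_t$ denotes the central path of~\eqref{eq:cex} associated with an \emph{arbitrary} self-concordant barrier $f$ of the feasible polytope $\Kcal_t$. I would then invoke (and, in the full paper, prove) the key structural theorem: as $t \to \infty$, the curve $\lambda \mapsto x(\lambda)$ converges uniformly on compact intervals to a piecewise linear curve $\cptrop$ whose breakpoints and slopes are determined only by the tropicalization of the constraints of~\eqref{eq:cex} and by the objective, and not by $f$. This universality is the whole reason the argument can defeat every self-concordant barrier simultaneously.

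Second, I would perform a combinatorial analysis of $\cptrop$ for the specific data of $\cex_n(t)$. The valuations $u_k = 3 \cdot 2^{k-2} - 1$ are chosen so that the tropical feasible set inherits the staircase pattern of a Klee--Minty cube, and the tropical analogue of the centrality condition forces $\cptrop$ to visit $\Omega(2^n)$ distinct cells of the tropical polyhedral complex induced by the constraint arrangement. I would establish, by induction on $n$, that the curve has at least $2^n - 1$ breakpoints, each corresponding to a transition where the support of the active-constraint set changes, and I would track the values of $\lambda$ at which those breakpoints occur, observing that they are spread across an interval of length $\Theta(u_n)$ compatible with driving the objective down to $1/2^{2L}$.

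Third, I would connect the breakpoints of $\cptrop$ to iteration counts. The standard analysis of self-concordant IPMs guarantees that consecutive iterates satisfy $\eta_{k+1}/\eta_k \leq 1 + C/\sqrt{\thet_f}$, i.e.\ in log-log coordinates each iterate advances $\lambda$ by a quantity that is $o(1)$ as $t \to \infty$ (for fixed $f$ and $n$). Consequently, an iterate sequence that starts near $\cp_t(\eta_0)$ with objective $\Omega(1)$ and ends near $\cp_t(\eta_N)$ with objective $\leq 1/2^{2L}$ must trace out a discrete path whose projection to log-log coordinates approximates $\cptrop$ on an interval containing all $2^n - 1$ breakpoints; since crossing each breakpoint requires at least one iteration once $t$ is large enough for the approximation to be tight, one concludes $N \geq 2^n - 1$.

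The main obstacle I expect is the universality step. For the specific case of the logarithmic barrier treated in the cited prior work, the Hessian has an explicit sum-of-squares expression that allows a direct tropicalization. For a general self-concordant $f$, no such closed form is available, and one only has access to the differential inequalities defining self-concordance together with the $\thet_f$-normality condition. Converting these purely inequational data into sufficient control of the dominant monomials of the optimality condition $\eta c + \nabla f(x) = 0$ along the path, uniformly over all admissible barriers, is the delicate part. My working idea is to exploit the fact that any self-concordant barrier over $\Kcal_t$ is bracketed between fixed multiples of the canonical barrier, so that the tropical dominant terms of $\nabla f$ are forced to match those of the logarithmic barrier of any simplicial refinement of the constraints; this should allow the tropical limit of the optimality system to be identified without ever using an explicit formula for $f$.
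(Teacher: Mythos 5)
Your overall skeleton (tropicalize the central path, prove the limit is barrier-independent, then count pieces of the limit curve for this Klee--Minty-like instance) is indeed the paper's strategy, and your inductive count of $2^n-1$ pieces matches Theorem~\ref{th:cex_gamma} in spirit. But your third step contains a genuine gap that would invalidate the theorem as stated. You convert breakpoints into iterations by assuming that every iterate satisfies $\eta_{k+1}/\eta_k \leq 1 + C/\sqrt{\thet}$, i.e.\ an \emph{upper} bound on the per-iteration progress. No such bound holds for the methods the theorem is about: long-step and predictor--corrector schemes (as described by Renegar) may increase $\eta$ by arbitrarily large factors in a single iteration, and ruling out exactly such ``long jumps'' is the whole difficulty. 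Moreover, ``crossing each breakpoint costs one iteration'' is not a valid accounting: the $\log_t$-image of a single Euclidean segment is (up to $\log_t 2$) a \emph{tropical} segment, which is itself a polygonal curve that can contain up to $n$ ordinary breakpoints, so one iteration can in principle pass several breakpoints. The correct argument makes no assumption on step sizes at all: one shows that the entire polygonal trajectory of the IPM lies in a multiplicative neighborhood $\Mcal_{\multlow,\multupp,t}$ of the central path (this covers Renegar's short-step, long-step and predictor--corrector neighborhoods, Proposition~\ref{prop:mult_neigh}), that the log-limit of this neighborhood collapses uniformly onto the tropical central path (Theorem~\ref{thm:unif_conv_neighborhood}), and then invokes the tube proposition of~\cite{ABGJ18} (Proposition~\ref{prop:tube}): any concatenation of tropical segments staying in an $\eps$-tube around $\cptrop$ and joining its two ends must use at least $\gamma$ tropical segments, where $\gamma$ is the minimal number of tropical segments describing the path --- and it is $\gamma$, not the number of breakpoints, that is shown to equal $2^n-1$.

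There is also a weaker but real issue in your universality step. A general self-concordant barrier is \emph{not} bracketed between fixed multiples of a canonical barrier, and tropicalizing the optimality condition $\eta c + \nabla f(x)=0$ requires knowing the dominant monomials of $\nabla f$, which is precisely the information unavailable for an abstract barrier; the paper explicitly notes that the algebraic-curve argument used for the log-barrier in~\cite{ABGJ18} does not extend. What does work, and what you would need, are two-sided \emph{Hessian} comparisons with the log-barrier: $H(x) \succcurlyeq \frac{1}{n}\Diag(1/x^2)$ whenever the domain sits in the positive orthant, and $H(x) \preccurlyeq 4nK^2\Diag(1/x^2)$ at points sufficiently far from the boundary (Sections~\ref{sec:log_like_barrier}). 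These feed a variational argument: if $\log_t \cpt(t^\lambda)$ strayed from the tropical barycenter $\cptrop(\lambda)$, one could construct a deep feasible point with strictly smaller penalized objective, a contradiction --- no tropicalization of the first-order optimality system is ever performed. Without replacing your bracketing claim by such quantitative Hessian control (and without the segment-counting mechanism above), the proof does not go through for arbitrary barriers or for arbitrary step rules.
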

The detailed form of this result is \Cref{th:cex_lower_bound}. It requires technical definitions on the neighborhood of the central path used by IPMs.

We provide in Figure~\ref{fig:cube} an illustration of the feasible set of~\eqref{eq:cex} when $n = 3$ with $t = 2.5$. As we can already see there, the vertices obey to rather different asymptotic behaviors when the parameter $t$ grows: some of them get ``very close'', while some others get ``very far''. Indeed, the entries of the vertices are given by rational functions of $t$, and each of them has its own asymptotics $\Theta(t^\alpha)$ ($\alpha \in \R \cup \{-\infty\}$) when $t \gg 1$. In such a case, a common approach that originates from tropical geometry is to apply a logarithmic rescaling, where the base of the logarithm is set to $t$. This leads to introduce the map $\log_t (\cdot) = \frac{\log(\cdot)}{\log t}$, and to examine the image under $\log_t$ of objects such as the feasible set of~\eqref{eq:cex}.\footnote{Here, the map $\log_t$ is understood entrywise: given a point $x \in \R_{\geq 0}^n$, its image under $\log_t$ stands for the point $(\log_t x_i)_i$. We use the convention $\log_t 0 = -\infty$.} This process is loosely referred to as the \emph{tropicalization}.  Results in tropical convexity entail that the image under $\log_t$ of the feasible set of~\eqref{eq:cex} has a limit, and that this limit is a ``tropical polytope.'' 
We point out that no prior knowledge of tropical geometry is necessary to understand the scheme of our approach, and the elementary tools and notations required are motivated and introduced when needed. 

\begin{figure}
\begin{center}
\begin{minipage}{.45\textwidth}
\begin{center}
\begin{tikzpicture}[x  = {(0.9cm,-0.076cm)},
                    y  = {(-0.06cm,0.95cm)},
                    z  = {(-0.44cm,-0.29cm)},
                    scale = 4.5,
                    color = {lightgray},>=stealth']

\begin{scope}
\draw[gray!80!black,->] (0,0,0) -- (0.7,0,0) node[below] {$x_1$};
\draw[gray!80!black,->] (0,0,0) -- (0,1.2,0) node[left] {$x_2$};
\draw[gray!80!black,->] (0,0,0) -- (0,0,1.2) node[left] {$x_3$};	
\end{scope}

  \definecolor{pointcolor_p}{rgb}{ 1,0,0 }
  \tikzstyle{pointstyle_p} = [fill=orange]

  \coordinate (v0_p) at (0.406338, 0.525437, 0.0682252);
  \coordinate (v1_p) at (0.451, 0.451, 0.098);
  \coordinate (v2_p) at (0.0445682, 0.0445682, 0.000456379);
  \coordinate (v3_p) at (0.0569325, 0.0736196, 0.000753865);
  \coordinate (v4_p) at (0, 0.666667, 0.00682667);
  \coordinate (v5_p) at (0, 0, 0);
  \coordinate (v6_p) at (0, 0, 1);
  \coordinate (v7_p) at (0, 0.985023, 0.0149766);

  \definecolor{edgecolor_p}{rgb}{ 0,0,0 }

  \definecolor{facetcolor_p}{rgb}{ 0.4667,0.9255,0.6196 }

  \tikzstyle{facestyle_p} = [fill=gray!30, fill opacity=0.8, draw=edgecolor_p, thick, line cap=round, line join=round]

  \draw[facestyle_p] (v5_p) -- (v6_p) -- (v7_p) -- (v4_p) -- (v5_p) -- cycle;
  \draw[facestyle_p] (v0_p) -- (v1_p) -- (v2_p) -- (v3_p) -- (v0_p) -- cycle;
  \draw[facestyle_p] (v7_p) -- (v0_p) -- (v3_p) -- (v4_p) -- (v7_p) -- cycle;
  \draw[facestyle_p] (v2_p) -- (v5_p) -- (v4_p) -- (v3_p) -- (v2_p) -- cycle;

  \fill[pointstyle_p] (v4_p) circle (0.25 pt);
  \fill[pointstyle_p] (v3_p) circle (0.25 pt);

  \draw[facestyle_p] (v1_p) -- (v6_p) -- (v5_p) -- (v2_p) -- (v1_p) -- cycle;

  \fill[pointstyle_p] (v5_p) circle (0.25 pt);
  \fill[pointstyle_p] (v2_p) circle (0.25 pt);

  \draw[facestyle_p] (v7_p) -- (v6_p) -- (v1_p) -- (v0_p) -- (v7_p) -- cycle;

  \fill[pointstyle_p] (v7_p) circle (0.25 pt);
  \fill[pointstyle_p] (v6_p) circle (0.25 pt);
  \fill[pointstyle_p] (v1_p) circle (0.25 pt);
  \fill[pointstyle_p] (v0_p) circle (0.25 pt);


\end{tikzpicture}
\end{center}
\caption{The feasible set of $\cex_3(t)$ with $t = 2.5$.}\label{fig:cube}
\end{minipage}
\hfill
\begin{minipage}{.45\textwidth}
\begin{center}
\include{trop_cp}
\end{center}
\caption{The tropical central path (in blue) of the family of linear programs $(\cex_3(t))$.}\label{fig:trop_cp}
\end{minipage}
\end{center}
\end{figure}

Following the same idea, we investigate the complexity of interior point methods in a more general setting, considering
parametric families of convex optimization problems of the form
\begin{equation}\label{eq:convex_t}
\Minimize \enspace \inp{c_t}{x} \enspace \SubjectTo \enspace x \in \Kt \, , \tag{$\textsc{CP}(t)$}
\end{equation}
where $(\Kt)_{t > 1}$ is a parametric family of closed convex sets included in $\R_{\geq 0}^n$, and $c_t \in \R_{\geq 0}^n$ for all $t > 1$. We suppose that these problems are well-posed, in the sense that the ``log-limits'' of the sets $\Kt$
and of the vectors $c_t$ are required to exist, \ie, the limit when $t \to +\infty$ of their image by the map $\log_t$. 
We denote by $\Ktrop$ and $\ctrop$ these limits.  The choice of an arbitrary self-concordant barrier $f_t$ over $\interior \Kt$ gives rise to a central path that we denote by $\cpt$. Then, under mild assumptions, which apply in particular to parametric families of convex polyhedra and more generally convex semialgebraic sets, we prove that the log-limit of these central paths does exist and has an explicit characterization. The latter uses the notion of \emph{tropical barycenter} of a compact set $S \subset \R^n$, which is defined as the supremum (in the sense of the entrywise ordering of $\R^n$) of the elements of $S$.  
\begin{theorem}[see~\Cref{thm:unif_conv_cp}]
\label{thm:unif_conv_cp_intro}
Suppose that~\Cref{assmpt:assmpt_K_c} holds. Then, the family of functions ${(\log_t \cpt)}_t$ uniformly converges to the map $\cptrop$, where $\cptrop(\lambda)$ is the tropical barycenter of the set $\{ x \in \Ktrop \colon \inpT{\ctrop}{x} \leq -\lambda \}$, and $\inpT{x}{y} \coloneqq \max_i (x_i + y_i)$. 
\end{theorem}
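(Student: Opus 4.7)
My approach is a sandwich argument on each coordinate, comparing $\log_t \cpt(\lambda)$ against $\cptrop(\lambda)$ from above and below, followed by upgrading pointwise convergence to uniformity.

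\textit{Setup and upper bound.} First I reparametrize so that $\cpt(\lambda)$ denotes the unique point on the classical central path associated to the logarithmic time $\lambda$ (e.g.\ via the rule $\eta = t^{\lambda}$, or equivalently such that the duality gap has log-valuation $-\lambda$, matching the convention implicit in $\cptrop$). By \Cref{assmpt:assmpt_K_c}, the sublevel set $\Pt(\lambda) := \{x \in \Kt : \inp{c_t}{x} \leq t^{-\lambda}\}$ is the intersection of $\Kt$ with a single halfspace whose log-limit is $\{x : \inpT{\ctrop}{x} \leq -\lambda\}$; standard tropicalization of linear constraints then gives that $\log_t \Pt(\lambda)$ converges (in Hausdorff distance) to $S(\lambda) := \{x \in \Ktrop : \inpT{\ctrop}{x} \leq -\lambda\}$. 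Since $\cpt(\lambda) \in \Pt(\lambda)$ up to a $\theta_{f_t}/\eta = t^{-\lambda + o(1)}$ objective slack coming from the self-concordance duality gap, passing to the limit yields $\limsup_t \log_t \cpt(\lambda)_i \leq \cptrop(\lambda)_i$ for each $i$.

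\textit{Lower bound via self-concordance.} This is the main step. For each coordinate $i$, pick a point $y \in S(\lambda)$ with $y_i = \cptrop(\lambda)_i$ and a lift $y_t \in \Pt(\lambda)$ with $\log_t y_t \to y$. I need to show $\cpt(\lambda)_i \geq y_{t,i} \cdot t^{-o(1)}$. The key ingredient is the self-concordant barrier property that the Dikin ellipsoid $E$ at $\cpt(\lambda)$ lies in $\Kt$, while $\sqrt{\theta_{f_t}}\cdot E$ (suitably symmetrized) contains $\Kt$. Translating this into the $i$-th coordinate direction, one shows that moving from $\cpt(\lambda)$ toward $y_t$ along coordinate $i$ cannot increase the ratio $y_{t,i}/\cpt(\lambda)_i$ beyond a polynomial factor in $\theta_{f_t}$ without incurring an objective increase beyond the duality-gap budget. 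Since $\theta_{f_t}$ grows at worst polynomially (certainly $t^{o(1)}$), this blow-up is negligible in log-scale, and one concludes $\liminf_t \log_t \cpt(\lambda)_i \geq \cptrop(\lambda)_i$.

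\textit{From pointwise to uniform.} Combining the two bounds gives pointwise convergence. To promote to uniform convergence on a compact interval of $\lambda$, I use that $\lambda \mapsto \cptrop(\lambda)$ is continuous and monotone (in the entrywise sense) because $\lambda \mapsto S(\lambda)$ is monotone decreasing in $\lambda$, and similarly each $\log_t \cpt(\cdot)$ is continuous; a Dini-type compactness argument, or direct Hausdorff estimates on the level sets, then yields uniform convergence.

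\textit{Main obstacle.} The hard part is clearly the coordinate-wise lower bound in step two: the classical self-concordance bounds (duality gap, Newton decrement) control scalar quantities, not individual coordinates, and one must extract a \emph{barrier-independent} coordinate-wise statement. This will require leveraging the universal Dikin ellipsoid inclusion together with the tropical structure of $\Ktrop$ (so that the worst-case direction $y - \cpt(\lambda)$ gets controlled by the geometry of the tropical level set $S(\lambda)$), and verifying that the $\sqrt{\theta_{f_t}}$-factor loss washes out on log-scale.
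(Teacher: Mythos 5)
Your upper bound and the general architecture (sandwich in log-scale) are fine, but the lower bound—the step you yourself flag as the main obstacle—has a genuine gap that your Dikin-ellipsoid route does not close. The containment results available at a central path point (Renegar's Theorem 2.3.4, which the paper uses inside \Cref{thm:hessian_ineq_ub}) only say that feasible points $y$ with $\inp{c_t}{y} \leq \inp{c_t}{\cpt(\eta)}$ lie in the ball $B_{\cpt(\eta)}(\cpt(\eta);4\thet+1)$, hence $y \leq (4\thet+2)\,\cpt(\eta)$ coordinate-wise; your claim that a $\sqrt{\thet}$-scaled ellipsoid contains all of $\Kt$ is false along the central path. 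So this argument dominates $\cpt(\eta)$ only over points at objective level $\inp{c_t}{\cpt(\eta)}$, not over points at level $1/\eta = t^{-\lambda}$. With the parametrization $\eta = t^{\lambda}$ (which is the one in \Cref{thm:unif_conv_cp}), you would therefore only conclude $\log_t \cpt(t^\lambda) \gtrsim \cptrop(\lambda')$ with $\lambda' = -\log_t \inp{c_t}{\cpt(t^\lambda)}$, and you still need the lower bound $\inp{c_t}{\cpt(\eta)} \geq t^{-o(1)}/\eta$ to identify $\lambda'$ with $\lambda$. Your parenthetical ``or equivalently such that the duality gap has log-valuation $-\lambda$'' assumes exactly this equivalence; but in the paper it is \Cref{prop:value_central_path}, which is \emph{deduced from} the convergence theorem, so your plan is circular at the decisive point.

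The paper avoids this by never proving a coordinate-wise lower bound directly: it proves a lower bound on the \emph{sum} $\sum_i \log_t (\cpt(t^\lambda))_i$ by comparing the penalized objective $f_{\eta,t}$ at $\cpt(\eta)$ with its value at a comparison point $x$ constructed in \Cref{lemma:good_lift}—a lift of (an erosion of) the tropical barycenter that is quantitatively deep inside $\Kt$, so that the log-like Hessian bounds of Section~2 (\Cref{thm:hessian_ineq_lb}, \Cref{thm:hessian_ineq_ub}, \Cref{cor:log_bound}) apply at $x$; optimality of $\cpt(\eta)$ then yields the bound, and \Cref{lemma:component_ineq} combines it with the entrywise upper bound (from \Cref{lemma:cp_val_ineq} and the $1$-Lipschitz property of $\cptrop$) to get the $d_\infty$ estimate. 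None of this machinery—the regularity/semilinearity of $\Ktrop$ from \Cref{assmpt:assmpt_K_c}, the erosion and non-archimedean lifting, and the barrier-independent log-like estimates—appears in your sketch, yet it is precisely what makes a barrier-independent quantitative comparison possible. A secondary issue: your Dini/compactness upgrade would give uniformity only on compact $\lambda$-intervals (and Dini needs monotonicity in $t$ that you do not have), whereas the paper establishes an explicit $O(1/\log t)$ rate uniform over all $\lambda \in \R$, which is what the complexity lower bounds in Section~4 actually use.
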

We provide in Figure~\ref{fig:trop_cp} an illustration of the log-limits of the feasible set of~\eqref{eq:cex} (in grey) and their central paths (in blue), when $n = 3$.

Theorem~\ref{thm:unif_conv_cp_intro} generalizes
the result of~\cite{ABGJ18} which described
the tropicalization of the central path
in the special case of the logarithmic barrier and of linear programming.
\Cref{thm:unif_conv_cp_intro} now applies to {\em arbitrary} self-concordant
barriers, and also to convex programming problems that are
more general than linear programming.
This is made possible by showing that any self-concordant barrier essentially behaves like the logarithmic barrier over $\R_{\geq 0}^n$, provided that the domain of the barrier is included in $\R_{\geq 0}^n$. This ``log-like'' property, which is of independent interest, is proved in Section~\ref{sec:log_like_barrier}.

We now informally explain how this characterization can lead to lower bounds on the iteration complexity of IPMs like the one of Theorem~\ref{thm:unif_conv_cp_intro}. As shown in~\cite{ABGJ18} and illustrated in Figure~\ref{fig:trop_cp}, the tropical central path is a piecewise linear curve, and the number of its nondifferentiability points roughly controls the number of linear pieces needed to approximate it. The latter quantity may be large because the tropical central path can lie in the boundary of the tropical feasible set. We prove in Section~\ref{sec:complexity} that the trajectory followed by interior point methods, as described by Renegar in his monograph~\cite{Renegar01}, is fully contained in a multiplicative neighborhood of the central path, and that the log-limit of the latter also coincides with the tropical central path $\cptrop$. In other words, provided that $t \gg 1$, the image by $\log_t$ of the trajectory of the IPM is a tight approximation of the tropical central path. This allows to prove that this trajectory has to contain ``many'' segments, or, equivalently, that  IPM must perform many iterations. More accurately, Theorems~\ref{th:complexity} and~\ref{th:complexity2} prove that the number of iterations is bounded from below by the minimal number of tropical segments needed to describe the tropical central path; see Section~\ref{sec:uniform} for a definition of tropical segments. 

Another novelty of the present work lies in the counterexample of linear program for which interior points methods are not strongly polynomial.
The counterexample described in~\cite{ABGJ18} involved a specific linear program with $3r+1$ inequalities in dimension $2r$, leading to a lower
bound for the iteration complexity in $\Omega(2^{n/2})$ in terms of the number $n$ of variables. The present example leads to a lower bound of $\Omega(2^{n})$ iterations (a stronger bound even in the special case of the logarithmic barrier). Interestingly, the numbers of variables and constraints, $(n,2n)$,
are the same as in the example of Klee and Minty~\cite{KleeMinty} or other cubes (see~\eg~\cite{amenta1999deformed}) arising in the study of the worst-case complexity of the simplex method, as well as of the combinatorial diameter of polytopes~\cite{KleeWalkup67}. In fact, the analysis of the tropical central path of~\eqref{eq:cex} bears resemblance with that of the simplex path in  deformed cubes like the one of Klee--Minty: the tropical central path~\eqref{eq:cex} consists of two copies of the tropical central path of~($\cex_{n-1}(t)$) located in the neighborhood of disjoint facets of~\eqref{eq:cex}, separated by an extra tropical segment. In this way, we prove that the tropical central path consists in the concatenation of $2^{n}-1$ tropical segments (Theorem~\ref{th:cex_gamma}). 

\paragraph{Related work.} The theory of self-concordant barriers has been introduced by Nesterov and Nemirovskii, and we refer to~\cite{NesterovN94} and Renegar's monograph~\cite{Renegar01} for a complete account. One notable result of~\cite{NesterovN94} is the definition of the universal barrier for every convex body, with complexity value $O(n)$. This bound has been since refined  into~$n$ by Lee and Yue~\cite{LeeYue18}. Another related result is the work of Bubeck and Eldan~\cite{BubeckEldan15} on the entropic barrier, where they prove that its complexity value is $(1 + o(1))n$. A difficulty with these barriers is that they are defined implicitly, and rather difficult to compute with. To this respect, a remarkable breakthrough has been done by Lee and Sidford~\cite{LeeSidford14,LeeSidford19} (see also~\cite{Fazel2021}) who introduced a polynomial-time computable barrier based on Lewis weights, with complexity value $O(n \log^5 m)$ for $n$-dimensional polytopes defined by $m$ inequalities.

Concerning the study of the iteration complexity of IPMs,  Vavasis and Ye~\cite{VavasisYe} improved the log-barrier predictor-corrector approach~\cite{MizunoToddYe} by taking into account the geometry of the central path. They reduced the number of iterations along ``straight parts'' (this is called the \emph{layered least squares} (LLS) step). This yields a bound $O(\sqrt{n} \log \chi)$ on the number of iterations, where $\chi$ is a condition number of the matrix associated with the constraints. This bound has been later refined by Monteiro and Tsuchiya~\cite{MonteiroTsuchiya} into a condition number that is invariant under diagonal scaling. More recently, Dadush, Huiberts, Natura and Végh~\cite{dadush2020} developed a scaling invariant LLS IPM, and improved the bound of Monteiro and Tsuchiya by exploiting an imbalance measure of the circuits of the matroid associated with the constraints of the linear program. Another iteration complexity measure for the case of the logarithmic barrier, and that takes the form of an integral of a certain ``curvature'' along the central path, has been introduced in the work of Sonnevend, Stoer and Zhao~\cite{sonnevend1991complexity}. 

The link between the complexity of the simplex method and that of (log-barrier) IPMs was initially suggested by Deza, Nematollahi and Terlaky~\cite{dezakleeminty}. They showed that for a Klee--Minty cube with exponentially many redundant constraints, the central path can be forced to visit a neighborhood of each of the $2^n$ vertices of the cube. Linear programs with $(n,2n)$ variables and constraints were also considered by Mut and Terlaky~\cite{mut} in the context of IPMs. They showed that considering these instances is enough to obtain worst case bounds for the Sonnevend's curvature of the central path. 

The idea of applying tropical geometry to the complexity analysis of IPMs was initially developed in the previous work~\cite{ABGJ18} in the case of the logarithmic barrier; see also~\cite{sirev} for a more introductive presentation. A key notion that we use from~\cite{ABGJ18} is the complexity measure of the tropical central path in terms of number of tropical segments. In contrast, the techniques used to study the log-limit of the central path for general barriers have to be completely different. Indeed, the proof of~\cite{ABGJ18} relies in an essential way on the nature of the log-barrier central path that is a piece of a real algebraic curve. This is no longer true for general barriers. We develop here a new approach in which the log-limit of the central path is studied directly from the geometric properties of self-concordant barriers, without any further requirements. We note that the study of the log-limit of the entropic barrier central path appeared in~\cite{entropic}. There again, the proof was tied to the specific form of the barrier (Cramér transform). 

\paragraph{Outline.} The log-like properties of self-concordant barriers are established in~\Cref{sec:log_like_barrier}. In~\Cref{sec:uniform}, we study the log-limit of the central path, and prove~\Cref{thm:unif_conv_cp_intro}. In~\Cref{sec:complexity}, we exploit this result to establish a general lower bound for IPMs. We finally analyze the linear program~\eqref{eq:cex} and its tropical central path in~\Cref{sec:cube}, and prove \Cref{th:cex_intro}. Appendix contains auxiliary results and some of the proofs.

\paragraph{General notation.} We write $[n]$ for the set $\{1, \dots, n\}$. The notation $e$ stands for the all one vector, and for $I \subset [n]$, $\onevector^I$ is the vector whose $i$th component is equal to $1$ if $i \in I$ and $0$ otherwise. Similarly, $e^i$ is the $i$th element of the canonical basis. 

For $n$-vectors $x, y$, we write $x \leq y$ if for all $i \in [n]$, $x_i \leq y_i$, and $x < y$ if for all $i \in [n]$, $x_i < y_i$. We often use the notation $f(x)$ for the vector $(f(x_i))_i$ obtained by applying the function $f$ entrywise.

\section{Log-like properties of self-concordant barriers}
\label{sec:log_like_barrier}

We start by recalling some basic elements of the theory of self concordant barriers, initially developed in~\cite{NesterovN94}. We follow the exposition of~\cite{Renegar01}.

Let $f$ be a real valued $\Ccal^2$ function with domain $D_f$, where $D_f$ is an open convex subset of $\R^n$. We denote its gradient by $g(x)$ and its Hessian by $H(x)$, and we suppose that the latter is positive definite for all $x \in D_f$ (subsequently, $f$ is strictly convex). Every Hessian $H(x)$ ($x \in D_f$) gives rise to an inner product, defined by
\[
\inp{u}{v}_x \coloneqq \inp{u}{H(x)v} \, .
\]
This inner product induces a norm $\norm{\cdot}_x$. We denote by $B_x(y, r) \coloneqq \{ z \in \R^n \colon \norm{y - z}_x < r \}$ the ball with center $y$ and radius $r$ in the sense of $\norm{\cdot}_x$. We denote by $g_x$ and $H_x$ the gradient and Hessian induced by the inner product $\inp{\cdot}{\cdot}_x$, \ie, $g_x(y) \coloneqq H(x)^{-1} g(y)$ and $H_x(y) \coloneqq H(x)^{-1} H(y)$.

The function $f$ is \emph{(strongly nondegenerate) self-concordant} if for all $x \in D_f$ we have $B_x(x,1) \subset D_f$ and for all $y \in B_x(x,1)$ and $v \neq 0$,
\[
1 - \norm{y-x}_x \leq \frac{\norm{v}_y}{\norm{v}_x} \leq \frac{1}{1-\norm{y-x}_x} \, .
\]
The intuition behind this condition is that the change of the norm $\norm{\cdot}_x$ into $\norm{\cdot}_y$ is well controlled when $x$ and $y$ are close to each other. By \cite[Theorem 2.2.9]{Renegar01}, this implies that $f(x) \to +\infty$ and $x \to +\infty$. If $f$ is thrice differentiable, then, for each choice
of $x,d\in \R^n$ the restriction $\phi(t)=f(x+td)$ satisfies $\phi'''(t)\leq 2\phi''(t)^{3/2}$, the property required in the original definition of~\cite{NesterovN94}, see~\cite[\S~2.5]{Renegar01} for a detailed discussion. The function $f$ is furthermore said to be a \emph{$\thet$-self-concordant barrier function} if
\[
\thet \coloneqq \sup_{x \in D_f} \norm{g_x(x)}^2_x < \infty \, .
\]
The quantity $\thet$ is called the \textit{complexity value of $f$}.

The main result of this section is to show that the Hessian of $f$ is well approximated by that of the logarithmic barrier function $x \mapsto \log x$ over the positive orthant $\R_{> 0}^n$, when the latter contains $D_f$. We start by a lemma on the norms $\norm{\cdot}_x$ in this setting, where we denote by $|x|$ the vector with entries $|x_i|$.
\begin{lemma}
\label{lemma:ellipsoid_positive_orthant}
Suppose that $D_f \subset \R_{> 0}^n$. Then for all $x \in D_f$ and $z \in B_x(0;1)$, we have $|z| < x$. In consequence, for any $r>0$ and $y \in B_x(x;r)$, we have $y < (r+1)x$.
\end{lemma}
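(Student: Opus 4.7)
The plan is to exploit the symmetry of the Dikin ellipsoid $B_x(x;1)$ around its center $x$, together with the self-concordance property that $B_x(x;1) \subset D_f$, and the hypothesis $D_f \subset \R_{>0}^n$.

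For the first assertion, suppose $z \in B_x(0;1)$, i.e.\ $\norm{z}_x < 1$. The two translates $x+z$ and $x-z$ both lie in $B_x(x;1)$, since
\[
\norm{(x+z) - x}_x = \norm{z}_x < 1 \quad \text{and} \quad \norm{(x-z) - x}_x = \norm{-z}_x = \norm{z}_x < 1.
\]
By self-concordance, $B_x(x;1) \subset D_f$, and by hypothesis $D_f \subset \R_{>0}^n$, so both $x+z$ and $x-z$ have strictly positive coordinates. Componentwise this reads $-x < z < x$, which is precisely $\abs{z} < x$.

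For the second assertion, given $y \in B_x(x;r)$, set $z \coloneqq (y-x)/r$. Then $\norm{z}_x = \norm{y-x}_x / r < 1$, so the first part applies and yields $\abs{y - x} = r \abs{z} < r x$. In particular $y - x < r x$, hence $y < (r+1) x$, as desired.

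There is no real obstacle here: the only ingredients are the fact, built into the definition of self-concordance, that the unit Dikin ball at $x$ lies in the domain $D_f$, and the elementary observation that this ball is symmetric around $x$ in the $\norm{\cdot}_x$-norm. The symmetry is what upgrades the one-sided conclusion $x + z > 0$ (which would follow from $D_f \subset \R_{>0}^n$ alone) to the two-sided bound $\abs{z} < x$, which is the substantive content of the statement and the form needed for the subsequent ``log-like'' comparison with the standard logarithmic barrier.
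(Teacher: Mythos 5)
Your proof is correct and follows essentially the same route as the paper's: both conclude $x \pm z \in B_x(x;1) \subset D_f \subset \R_{>0}^n$ to get the two-sided bound $\abs{z} < x$, and then handle the second claim by rescaling $z = (y-x)/r$. No discrepancies to note.
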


\begin{proof}
Since $z \in B_x(0;1)$, we have $x+z \in B_x(x; 1)$. As $B_x(x; 1) \subset D_f \subset \R^n_{> 0}$, we deduce $x + z > 0$. As $-z \in B_x(0;1)$, we similarly have $x - z > 0$. Therefore, $|z_i| < x_i$ for all $i \in [n]$. The second statement follows from the first one applied to $z = \frac{1}{r} (y - x)$.
\end{proof}

Given a point $x \in \R_{> 0}^n$, we denote by $1 / x$ (resp.~$1/x^2$) the vector with entries $1 / x_i$ (resp.~$1 / x_i^2$). Besides, if $v$ is a vector, the notation $\Diag(v)$ stands for the diagonal matrix with diagonal entries~$v_i$.
We denote by $\succcurlyeq $ the Loewner order on the space of symmetric matrices,
so that $A\succcurlyeq B$ if $z^\top A z\geq z^\top Bz$ holds for
all $z\in \R^n$. 
The following statement provides a lower bound of the Hessian of $f$:
\begin{proposition}
\label{thm:hessian_ineq_lb}
Suppose that $D_f \subset \R_{> 0}^n$. Then for all $x \in D_f$, we have
\[
H(x) \succcurlyeq \frac{1}{n} \Diag \Big(\frac{1}{x^2}\Big) \, .
\]
\end{proposition}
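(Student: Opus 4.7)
The plan is to turn the inclusion provided by \Cref{lemma:ellipsoid_positive_orthant} into a pointwise norm comparison, and then pass from the $\ell_\infty$ norm to the $\ell_2$ norm at the cost of a factor $1/n$.

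First, I would observe that \Cref{lemma:ellipsoid_positive_orthant}, applied to $B_x(0,1)$, asserts that every $z \in \R^n$ with $\norm{z}_x < 1$ satisfies $|z_i| < x_i$ for all $i$. Equivalently, setting $\norm{z}_{\infty,x} \coloneqq \max_i |z_i|/x_i$ (a weighted $\ell_\infty$-norm), the open unit ball of $\norm{\cdot}_x$ is contained in the open unit ball of $\norm{\cdot}_{\infty,x}$. Since both $\norm{\cdot}_x$ and $\norm{\cdot}_{\infty,x}$ are positively homogeneous, this yields the pointwise comparison
\[
\norm{z}_x \geq \max_{i \in [n]} \frac{|z_i|}{x_i} \qquad \text{for all } z \in \R^n \, .
\]
(A short contrapositive/homogeneity argument handles the strict-vs.-non-strict issue: if $\max_i |z_i|/x_i > 1$ then $(1-\eps) z$ still lies outside the box for $\eps$ small enough, hence outside $B_x(0,1)$, so $\norm{z}_x \geq 1$; rescaling gives the inequality above.)

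Next, squaring and using the elementary bound $\max_i a_i^2 \geq \frac{1}{n} \sum_i a_i^2$ applied to $a_i = z_i/x_i$, I get
\[
\inp{z}{H(x) z} = \norm{z}_x^2 \; \geq \; \max_{i \in [n]} \frac{z_i^2}{x_i^2} \; \geq \; \frac{1}{n} \sum_{i=1}^n \frac{z_i^2}{x_i^2} = \frac{1}{n} \inp[\big]{z}{\Diag(1/x^2) z} \, ,
\]
valid for every $z \in \R^n$. This is exactly the Loewner inequality $H(x) \succcurlyeq \frac{1}{n} \Diag(1/x^2)$.

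There is no real obstacle here: the entire content of the proposition is already packaged in \Cref{lemma:ellipsoid_positive_orthant}. The one place to be a bit careful is the passage from the open inclusion $B_x(0,1) \subset \prod_i (-x_i, x_i)$ to the pointwise norm inequality $\norm{z}_x \geq \norm{\cdot}_{\infty,x}$, which must be justified by homogeneity since \Cref{lemma:ellipsoid_positive_orthant} is stated with strict inequalities.
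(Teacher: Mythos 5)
Your proof is correct and takes essentially the same route as the paper: both arguments reduce the statement to \Cref{lemma:ellipsoid_positive_orthant} and then use homogeneity to pass from the inclusion of the open unit ball $B_x(0;1)$ in the box $\{|z|<x\}$ to the quadratic inequality, with the factor $1/n$ coming from the same elementary $\ell_\infty$--$\ell_2$ comparison (the paper writes it as $\sum_i (z_i/x_i)^2 < n$ inside the ball, you write it as $\max_i^2 \geq \frac{1}{n}\sum_i$). Your handling of the strict-versus-nonstrict issue by rescaling is the same device as the paper's ``taking closures and using homogeneity.''
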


\begin{proof}
  Let $x \in D_f$. By \cref{lemma:ellipsoid_positive_orthant}, for all $z \in B_x(0; 1)$, we have $|z| < x$, and therefore $\sum_i \big( \frac{z_i}{x_i} \big)^2 < n$.
In other words,
\[
B_x(0; 1) = \big\{ z \in \R^n \colon \transpose{z} H(x) z < 1 \big\} \subset \bigg\{ z \in \R^n \colon \sum_i {\Big( \frac{z_i}{x_i} \Big)}^2 < n \bigg\} \, .
\]
Taking the closure of these two sets, and using the homogeneity in $z$,
we deduce that $\sum_i {\Big( \frac{z_i}{x_i} \Big)}^2 \leq n z^\top H(x)z$
holds for all $z\in \R^n$, meaning that $\frac{1}{n} \Diag\big(\frac{1}{x^2}\big)\preccurlyeq H(x)$.
\end{proof}

Given $x, y \in D_f$, an elementary computation from the lower bound on the second derivative of $s \coloneqq f(x + s(y-x))$ provided by \Cref{thm:hessian_ineq_lb} yields the following statement, whose proof is given in Appendix~\ref{app:proofs}. 
\begin{corollary}\label{cor:log_bound}
Suppose that $D_f \subset \R_{> 0}^n$. Then for all $x,y \in D_f$, we have
\[
f(y) - f(x) \geq \inp[\Big]{g(x) + \frac{1}{nx}}{y-x} + \frac{1}{n} \sum_i \big(\log x_i - \log y_i \big) \, .
\]
\end{corollary}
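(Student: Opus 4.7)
The plan is to reduce the inequality to a one‑dimensional Taylor estimate along the segment from $x$ to $y$. Since $D_f$ is convex, the segment $\{x + s(y-x): s\in[0,1]\}$ lies entirely in $D_f$, so $\phi(s) \coloneqq f(x + s(y-x))$ is a well-defined $\mathcal{C}^2$ function on an open neighborhood of $[0,1]$, with $\phi'(0) = \inp{g(x)}{y-x}$ and $\phi''(s) = (y-x)^\top H(x+s(y-x))(y-x)$. Taylor's formula with integral remainder then gives
\[
f(y) - f(x) \;=\; \phi(1) - \phi(0) \;=\; \inp{g(x)}{y-x} \;+\; \int_0^1 (1-s)\,\phi''(s)\,ds.
\]

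Next I apply \Cref{thm:hessian_ineq_lb} pointwise along the segment. Since $x + s(y-x) \in D_f \subset \R^n_{>0}$ for every $s \in [0,1]$, the proposition yields $H(x+s(y-x)) \succcurlyeq \tfrac{1}{n}\Diag(1/(x+s(y-x))^2)$, hence
\[
\phi''(s) \;\geq\; \frac{1}{n}\sum_{i=1}^n \frac{(y_i - x_i)^2}{(x_i + s(y_i - x_i))^2}.
\]
Substituting this lower bound into the integral remainder and exchanging sum and integral reduces the problem to evaluating, for each index $i$, the scalar integral $I_i \coloneqq \int_0^1 (1-s)(y_i-x_i)^2/(x_i + s(y_i-x_i))^2\,ds$.

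The last step is the elementary computation of $I_i$. When $y_i = x_i$ the integrand is zero and $I_i = 0$; otherwise, the substitution $u = x_i + s(y_i - x_i)$ (so that $du = (y_i-x_i)\,ds$ and $1-s = (y_i - u)/(y_i - x_i)$) yields
\[
I_i \;=\; \int_{x_i}^{y_i} \frac{y_i - u}{u^2}\,du \;=\; y_i\Bigl(\frac{1}{x_i} - \frac{1}{y_i}\Bigr) - \log\frac{y_i}{x_i} \;=\; \frac{y_i - x_i}{x_i} + \log x_i - \log y_i,
\]
which is a valid identity in both cases $y_i > x_i$ and $y_i < x_i$. Summing over $i$, the first term assembles into $\inp{1/x}{y-x}$ and the logarithmic term into $\sum_i(\log x_i - \log y_i)$; combining with the leading $\inp{g(x)}{y-x}$ term and the prefactor $1/n$ yields exactly the claimed inequality. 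No real obstacle is anticipated: the only delicate point is the sign/orientation bookkeeping in the substitution (which works uniformly thanks to the factor $(y_i - x_i)^2$), and the fact that $D_f\subset\R^n_{>0}$ is what guarantees both the integrability of $1/u^2$ on $[x_i,y_i]$ and the applicability of \Cref{thm:hessian_ineq_lb} at every point of the segment.
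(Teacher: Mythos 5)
Your proof is correct and follows essentially the same route as the paper: restrict $f$ to the segment via $\phi(s)=f(x+s(y-x))$, bound $\phi''$ from below using \Cref{thm:hessian_ineq_lb}, and integrate. The only (cosmetic) difference is that you package the double integration as Taylor's formula with integral remainder and evaluate the resulting scalar integrals explicitly, whereas the paper integrates the inequality twice; both yield the identical bound.
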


The next statement provides an upper bound on the Hessian of $f$ at the point $x$, under the condition that this point is well ``inside'' the domain of $f$. We denote $K \coloneqq 4 \thet + 1$.

\begin{theorem}
\label{thm:hessian_ineq_ub}
Suppose that $D_f \subset \R_{> 0}^n$. Consider $x \in D_f$ such that for all $i \in [n]$, $x + (K+1) x_i e_i \in D_f$ and $x-\frac{1}{2} x_i e_i \in D_f$. Then
\[
H(x) \preccurlyeq
4nK^2 \Diag\Big(\frac{1}{x^2}\Big)  \, .
\]
\end{theorem}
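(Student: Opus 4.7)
The plan is to reduce the full matrix inequality to a diagonal bound $H(x)_{ii} \leq 4K^2/x_i^2$ for each $i \in [n]$, and then to prove this diagonal bound by a one-dimensional argument on the restriction $\phi(t) \coloneqq f(x + te_i)$.

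\textbf{Reduction to the diagonal.} Granting $H(x)_{ii} \leq 4K^2/x_i^2$ for every $i$, the Cauchy--Schwarz inequality for the inner product induced by $H(x)$ yields $|H(x)_{ij}| \leq \sqrt{H(x)_{ii}\,H(x)_{jj}} \leq 4K^2/(x_i x_j)$. Hence, for any $z \in \R^n$,
\[
z^\top H(x) z \;\leq\; 4K^2 \Bigl(\sum_i \tfrac{|z_i|}{x_i}\Bigr)^2 \;\leq\; 4nK^2 \sum_i \tfrac{z_i^2}{x_i^2}\,,
\]
the last step being $(\sum_i a_i)^2 \leq n\sum_i a_i^2$; this is exactly the required matrix bound.

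\textbf{One-dimensional core.} The restriction $\phi$ is a one-dimensional $\theta$-self-concordant barrier on an open interval $(a_i, b_i)$; convexity of $D_f$ and the hypotheses give $|a_i| \geq x_i/2$ and $b_i \geq (K+1) x_i$. Set $M \coloneqq \sqrt{\phi''(0)}$; the goal is $M^2 \leq 4K^2/x_i^2$. Two ingredients combine. First, the classical barrier inequality $\langle g(x'), y - x'\rangle \leq \theta$ for $x', y \in D_f$---which follows from the ODE $(\phi')^2 \leq \theta \phi''$ together with $\phi' \to +\infty$ at $b_i$, see \cite{Renegar01}---applied to $y = x - (x_i/2) e_i$ and to the pair $(x', y) = (x + se_i,\, x + (b_i-\epsilon) e_i)$ with $\epsilon \to 0^+$, specializes to
\[
\phi'(0) \;\geq\; -2\theta/x_i \quad\text{and}\quad \phi'(s) \;\leq\; \theta/(b_i - s) \ \text{ for all } s \in [0, b_i)\,.
\]
Second, self-concordance gives $H(x + s e_i) \succeq (1-sM)^2 H(x)$ for $s \in [0, 1/M)$, whence $\phi''(s) \geq M^2(1-sM)^2$; integrating over $(0, 1/M)$ yields $\phi'(1/M) - \phi'(0) \geq M/3$. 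Now combine both bounds at $s = 1/M$. In the case $M \geq 2/b_i$, we have $b_i - 1/M \geq b_i/2$, so $\phi'(1/M) \leq 2\theta/b_i \leq 2\theta/((K+1)x_i) \leq 1/(2x_i)$ (using $K+1 = 4\theta+2$, whence $\theta/(2\theta+1) \leq 1/2$), so
\[
M/3 \;\leq\; \phi'(1/M) - \phi'(0) \;\leq\; 1/(2x_i) + 2\theta/x_i \;=\; K/(2x_i)\,,
\]
giving $M^2 \leq 9K^2/(4x_i^2) \leq 4K^2/x_i^2$. In the complementary case $M < 2/b_i \leq 2/((K+1)x_i) \leq 2/x_i$, the bound $M^2 \leq 4K^2/x_i^2$ is immediate since $K \geq 1$.

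\textbf{Main obstacle.} The delicate point is pinning down the numerical constants: the self-concordance integration provides a gain proportional to $M$ (namely $M/3$), which must absorb the far-field barrier gradient bounds $\theta/x_i$ and $\theta/b_i$. The specific choice $K + 1 = 4\theta + 2$ in the hypothesis is exactly what makes the term $2\theta/b_i$ negligible relative to $2\theta/x_i$, so that $M/3 \leq K/(2x_i)$ closes up to deliver the stated constant $4K^2$.
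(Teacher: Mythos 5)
Your proof is correct, but it takes a genuinely different route from the paper's. The paper argues globally: using the hypothesis $x+(K+1)x_ie_i\in D_f$ together with \cite[Theorem~2.3.4]{Renegar01} (any $y$ with $\inp{g(x)}{y-x}\geq 0$ lies in $B_x(x;K)$) and \Cref{lemma:ellipsoid_positive_orthant}, it first shows $g(x)<0$ entrywise; it then applies the same theorem to $y=x-\tfrac{1}{2}x_ie_i$ to get $\pm\tfrac{x_i}{2}e_i\in B_x(0;K)$, so by convexity the scaled $\ell_1$-ball $\{z\colon\sum_i|z_i|/x_i\leq 1/2\}$ is contained in $B_x(0;K)$, and Cauchy--Schwarz yields $H(x)\preccurlyeq 4nK^2\Diag(1/x^2)$. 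You instead reduce to the diagonal bound $H(x)_{ii}\leq 4K^2/x_i^2$ (losing the same factor $n$ in the $\ell_1$--$\ell_2$ step) and prove it by a one-dimensional analysis along $e_i$, combining the elementary barrier inequality $\inp{g(x')}{y-x'}\leq\thet$ (Renegar's Theorem~2.3.3, which the paper itself invokes in \Cref{lemma:cp_val_ineq}) with direct integration of the self-concordance bound $\phi''(s)\geq M^2(1-sM)^2$, plus a case split on $M$ versus $2/b_i$. I checked the details: in the case $M\geq 2/b_i$ the point $x+(1/M)e_i$ is interior since $1/M\leq b_i/2$, so $\phi'(1/M)$ is defined and the integration is legitimate, and the constants close as you state ($2\thet/((K+1)x_i)\leq 1/(2x_i)$, then $M/3\leq K/(2x_i)$), while the other case is immediate from $K\geq 1$; when $b_i=+\infty$ your bound $\phi'(s)\leq\thet/(b_i-s)$ should just be read as $\phi'(s)\leq 0$, and your parenthetical derivation of $\inp{g(x')}{y-x'}\leq\thet$ does not actually need $\phi'\to+\infty$ at $b_i$ — minor points that do not affect correctness. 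As for what each route buys: the paper's is shorter once Theorem~2.3.4 is granted and also delivers the independently useful fact $g(x)<0$; yours uses only definition-level ingredients (the $\thet$-bound and the Hessian comparison inside Dikin ellipsoids), never needs Theorem~2.3.4, and never uses $D_f\subset\R_{>0}^n$ beyond $x>0$, so it adapts immediately to the weakened hypotheses discussed in the remark following the theorem.
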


\begin{proof}
Consider $x \in D_f$ as in the theorem. Let us show that all the components of $g(x)$ are negative. By contradiction, suppose that there exists $i_0 \in [n]$ such that $g_{i_0}(x) \geq 0$. By~\cite[Theorem~2.3.4]{Renegar01}, we know that all $y \in D_f$ satisfying $\inp{g(x)}{y-x} \geq 0$ belong to $B_x(x; K)$. We apply this result to $y = x + (K+1) x_{i_0} e_{i_0}$. Since $\inp{g(x)}{y-x} = (K+1) g_{i_0}(x) x_{i_0} \geq 0$, then $(K+1) x_{i_0} \onevector_{i_0} \in B_x(0;K)$. By~\cref{lemma:ellipsoid_positive_orthant}, this implies that $(K+1) x_{i_0} \onevector_{i_0} < (K+1) x$, which is a contradiction.

This implies that for all $i \in [n]$, we have $\inp{g(x)}{-\frac{x_i}{2} \onevector_i} \geq 0$ as the inner product of two vectors with negative components. Applying once more \cite[Theorem 2.3.4]{Renegar01}  to $y = x-\frac{x_i}{2}$ gives that $-\frac{x_i}{2} \onevector_i \in B_x(0;K)$. By symmetry, $\frac{x_i}{2} \onevector_i \in B_x(0;K)$ as well. Since the ball $B_x(0; K)$ is convex, we deduce that it contains the convex hull of the points $\pm \frac{x_i}{2} \onevector_i$ for $i \in [n]$, which is precisely the scaled closed $L_1$ ball, $\bar{B}^1_x(0,1/2)
:= \{z \in \R^n|\sum_i |z_i/x_i|\leq 1/2\}$. By the Cauchy-Schwarz inequality,
$\sum_i |z_i/x_i|\leq \sqrt{\sum_i(z_i/x_i)^2}\sqrt{n}$, and so,
$ \sum_i(z_i/x_i)^2 n \leq 1/4 $ implies $z^\top H(x)z \leq K^2$.
By homogeneity, this entails that $z^\top H(x)z \leq 4nK^2 \sum_i(z_i/x_i)^2$, i.e., $H(x)\preccurlyeq 4nK^2 \Diag(\frac{1}{x^2})$.
\end{proof}

\begin{remark}
  The assumption $D_f \subset \R_{> 0}^n$ can be weakened.
  It suffices that $D_f$ be included in a simplicial cone,
  i.e., a cone $C$ of $\R^n$ generated by $n$ linearly independent vectors
  $e_1,\dots,e_n$. Such a cone yields a lattice
  order in $\R^n$, $\leq_C$ defined by $u\leq_C v$ iff $v-u\in C$.
  Equivalently, denoting by $e_1^*,\dots,e_n^*$ the dual canonical basis
  of $e_1,\dots,e_n$, this order is defined by $e_i^*(v)\geq e_i^*(u)$
  for all $i\in [n]$. Then, the notion of absolute value 
  used in~\Cref{lemma:ellipsoid_positive_orthant} is still well defined
  ($|z|:= z\vee -z$ with respect to the lattice order), and the inequalities stated in~\Cref{thm:hessian_ineq_lb} and~\Cref{thm:hessian_ineq_ub} carry over, replacing $\Diag(1/x^2)$
  by the matrix $\sum_i (e_i^*)^\top e_i^*  /(e_i^*(x))^2$, in which $e_i^*$
  is identified to a row vector. The assumption
  that $D_f$ be included in a simplicial cone is satisfied in particular
  if $D_f$ is bounded.
\end{remark}

\section{The log-limit of the central path}\label{sec:uniform}

As explained in the introduction, we consider a parametric family of convex optimization problems of the form~\eqref{eq:convex_t}, where for all $t > 1$, $\Kt$ is a closed convex subset of $\R_{\geq 0}^n$ and $c_t \in \R_{\geq 0}^n$. For the sake of the simplicity, we assume that $0 \in \Kt$, meaning that the optimal value is equal to $0$. 

We suppose that every problem~\eqref{eq:convex_t} comes with a $\thet$-self-concordant barrier $f_t \colon \interior \Kt \to \R$, where $\thet$ is a quantity independent of $t$. For instance, in the case where the sets $\K_t$ are $n$-dimensional convex bodies, we can choose for $f_t$ the universal barrier of $\Kt$~\cite{NesterovN94}, so that $\thet = n$~\cite{LeeYue18}. We note that our assumption also covers the case where the barriers $f_t$ come from different families of barriers (logarithmic, entropic, \etc) with distinct complexity values, since any $\thet$-self-concordant barrier is a $\thet'$-self-concordant barrier for all $\thet' \geq \thet$.

In this setting, every convex optimization program~\eqref{eq:convex_t} gives rise to a central path that we denote by $\cpt(\cdot)$. More precisely, for all $\eta > 0$, the point $\cpt(\eta)$ is the unique optimal solution of
\begin{equation}
\label{eq:param_interior_point_program}
\Minimize \enspace \eta \inp{c_t}{x} + f_t(x) \enspace \SubjectTo \enspace x \in \interior \Kt \, .
\end{equation}
The purpose of this section is to study the log-limit of the family of the central paths $\cpt$. In order to ensure its existence, we make some assumptions on the existence and the properties of the log-limits of the feasible sets $\Kt$ and cost vectors $c_t$. Note that these limits should range over the domain $(\R \cup \{-\infty\})^n$ (recall the convention $\log_t 0 = -\infty$). We set $\T \coloneqq \R \cup \{-\infty\}$ for short. We denote by $d_\infty$ the $\ell_\infty$-metric,\footnote{We point out that $d_\infty$ can be extended to points $u, v \in \T^n$ by setting $\distInf(u,v) \coloneqq \inf \{ \lambda \geq 0 \colon u - \lambda \onevector \leq v \leq u + \lambda \onevector \}$.}
 and, given two closed sets $U, V$, by $d_\infty(U, V)$ the induced Hausdorff distance, \ie, 
\[
\distInf(U,V) \coloneqq \max\Big(\sup_{u \in U} \inf_{v \in V} \distInf(u,v), \sup_{v \in V} \inf_{u \in U} \distInf(u,v)\Big) \, .
\]
\begin{assumption}
There exist $\Ktrop \subset \T^n$ and $\ctrop \in \T^n$ such that:
\label{assmpt:assmpt_K_c}
\begin{enumerate}[label=(\roman*)]
\item\label{item:i} $\distInf(\log_t \Kt, \Ktrop) = O\Big(\frac{1}{\log t}\Big)$ and $\distInf(\log_t c_t, \ctrop) = O\Big(\frac{1}{\log t}\Big)$;
\item\label{item:ii} $\Ktrop$ is a regular set, \ie, it is equal to the closure of its interior;
\item\label{item:iii} $\Ktrop \cap \R^n$ is a semilinear set, \ie, is is a finite union of polyhedra.
\end{enumerate}
\end{assumption}

Let us explain this assumption. As we show below in \Cref{prop:trop_conv}, the log-limit $\Ktrop$ of the convex sets $(\Kt)_{t > 1}$ has the remarkable property of being convex in the tropical sense.
Let us recall that the \emph{tropical (max-plus) semifield} refers to the set $\T$ where the addition is defined as $x \vee y \coloneqq \max(x,y)$, and the multiplication as the usual sum $x+ y$. In this context, the zero and unit elements are respectively given by $-\infty$ and $0$. Then, given $u, v \in \T^n$, the \emph{tropical segment} $\tsegm(u,v)$ between $u$ and $v$ is defined as the set of points of the form $(u + \lambda e) \vee (v + \mu e)$, where $\lambda, \mu \in \T$ satisfies $\lambda \vee \mu = 0$. In more details, the term $u + \lambda e$ is the tropical analogue of the multiplication of the vector $u$ by the scalar $\lambda$. Then, the term $(u + \lambda e) \vee (v + \mu e)$ is analogous to the weighted addition of the vectors $u, v$.\footnote{We extent the addition $\vee$ entrywise, meaning that $u \vee v = (u_i \vee v_i)_i$. Equivalently, $u \vee v$ is the supremum of the two vectors $u, v$ w.r.t. the entrywise ordering.} The condition $\lambda \vee \mu = 0$ represents the fact that the two weights $\lambda, \mu$ sums to $1$ in the tropical sense. These weights are implicitly nonnegative, since $\lambda, \mu \geq -\infty$. To summarize, the tropical segment consists of the tropical analogues of the convex combinations of the vectors $u$ and $v$. The set $\tsegm(u, v)$ is a polygonal curve, and the direction vector supporting every ordinary segment has its entries in $\{0,\pm 1\}$~\cite[proposition 3]{Develin2004}. We refer to~\Cref{fig:tropical_segments} for an illustration of tropical segments in dimension $2$. We say that a set $S \subset \T^n$ is \emph{tropically convex} if for all $u, v \in S$, the tropical segment $\tsegm(u, v)$ lies in $S$. The relevance
of tropical geometry to the study of log-limits can be readily seen by considering the following elementary properties, which hold for all $a,b\in\R$,
\begin{align}
  \lim_{t \to \infty}\log_t (t^a+t^b) = \max(a,b),\qquad
\log_t (t^a\times t^b)   \equiv a+b \enspace.\label{e:morphism}
\end{align}

\begin{figure}
	\begin{center}
	\begin{tikzpicture}
\tikzstyle{segm}=[red!50]	
	
\draw [step=1cm,gray!30,very thin] (0.5,0.5) grid (6.5,6.5) ;

\draw [segm, very thick] (4,6) -- (6,6) -- (6,4) ;
\draw [segm, very thick] (3,6) -- (1,4) -- (1,2) ;
\draw [segm, very thick] (2,1) -- (4,1) -- (6,3) ;

\fill [segm] (4,6) circle (2pt);
\fill [segm] (6,4) circle (2pt);
\fill [segm] (3,6) circle (2pt);
\fill [segm] (1,2) circle (2pt);
\fill [segm] (2,1) circle (2pt);
\fill [segm] (6,3) circle (2pt);

\end{tikzpicture}
	\end{center}
	\caption{The shapes of tropical segments in dimension 2.}\label{fig:tropical_segments}
\end{figure}

The following result is a straightforward consequence of~\eqref{e:morphism}.
\begin{proposition}
\label{prop:trop_conv}
The set $\Ktrop$ obtained in~\eqref{assmpt:assmpt_K_c}
as the limit of the family $\log_t \Kt$
is tropically convex. \hfill\qed
\end{proposition}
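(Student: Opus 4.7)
The plan is to show that for any $u,v \in \Ktrop$ and any tropical weights $\lambda, \mu \in \T$ with $\lambda \vee \mu = 0$, the tropical convex combination $w := (u+\lambda e) \vee (v+\mu e)$ belongs to $\Ktrop$. The strategy is to realize $w$ as the log-limit of a suitably weighted sequence of ordinary convex combinations of points of $\Kt$.

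First I would use~\Cref{assmpt:assmpt_K_c}\ref{item:i} to select, for each sufficiently large $t$, points $u^{(t)}, v^{(t)} \in \Kt$ such that $\distInf(\log_t u^{(t)}, u) = O(1/\log t)$ and $\distInf(\log_t v^{(t)}, v) = O(1/\log t)$. Then, choosing the convex weight $\alpha_t := t^\lambda/(t^\lambda + t^\mu) \in [0,1]$ (with the convention $t^{-\infty} = 0$; the condition $\lambda \vee \mu = 0$ guarantees the denominator is at least $1$), the convexity of $\Kt$ ensures that
\begin{equation*}
w^{(t)} \;:=\; \alpha_t\, u^{(t)} + (1-\alpha_t)\, v^{(t)} \;=\; \frac{t^\lambda u^{(t)} + t^\mu v^{(t)}}{t^\lambda + t^\mu} \;\in\; \Kt \, .
\end{equation*}

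The core of the proof is then a coordinatewise application of the ``morphism'' relations~\eqref{e:morphism}. For each $i$, the numerator satisfies $\log_t(t^\lambda u^{(t)}_i + t^\mu v^{(t)}_i) \to (\lambda + u_i) \vee (\mu + v_i)$ as $t \to \infty$, while $\log_t(t^\lambda + t^\mu) \to \lambda \vee \mu = 0$. Thus $\log_t w^{(t)} \to w$ entrywise, and hence in the $\distInf$ metric on $\T^n$. Combining this with the Hausdorff convergence $\log_t \Kt \to \Ktrop$ and the fact that $\Ktrop$ is closed (as a Hausdorff limit of closed sets in the compact space $\T^n$), we conclude that $w \in \Ktrop$, as required.

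The only subtle part is bookkeeping when $-\infty$ entries occur among the components of $u, v$ or among $\lambda, \mu$. In such coordinates, the corresponding term $t^{\lambda + \log_t u^{(t)}_i}$ (or its analogue for $v$) vanishes or becomes negligible compared with the other term, and the limit~\eqref{e:morphism} extends naturally by continuity. I do not expect any genuine obstacle here beyond checking that the uniform estimate $O(1/\log t)$ on the Hausdorff distance survives these degenerate cases; this should follow from the definition of $\distInf$ on $\T^n$.
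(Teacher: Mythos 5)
Your argument is correct and is precisely the ``straightforward consequence of~\eqref{e:morphism}'' that the paper leaves unproved: lift $u,v$ to nearby points of $\Kt$ via \Cref{assmpt:assmpt_K_c}\ref{item:i}, take the ordinary convex combination with weights $t^\lambda/(t^\lambda+t^\mu)$, and pass to the log-limit coordinatewise (your bookkeeping of the $-\infty$ coordinates is also fine, since finiteness of $\distInf$ forces matching supports, so the corresponding lifted coordinates vanish exactly). The only point to repair is the justification of closedness of $\Ktrop$: $\T^n$ with the extended $\distInf$ metric is not compact, and Hausdorff proximity alone does not force the chosen representative $\Ktrop$ to be closed; instead, closedness follows directly from \Cref{assmpt:assmpt_K_c}\ref{item:ii}, a regular set being the closure of its interior.
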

Albeit being seemingly technical, \Cref{assmpt:assmpt_K_c} is natural from the point of view of tropical geometry, and its statements are either automatically satisfied or easy to check for canonical classes of parametric convex programs,
including parametric linear programs, like the one we consider in our counter
example~\eqref{eq:cex}. 
Indeed, it is shown in \cite[Theorem~12]{ABGJ18} that if $\Kt$ is a parametric
family of linear programs,
then the log-limit $\Ktrop$ of the family $\Kt$
in the sense of Hausdorff metric always exists, with a $1/(\log t)$ rate of convergence,
\ie, $\distInf(\log_t \Kt, \Ktrop) = O(1/(\log t))$.
Moreover, general results, building on o-minimal geometry~\cite{alessandrini}
or on the model theory of real valued fields~\cite[Th.~3.1]{tropicalspectrahedra}
entail that $\Ktrop$ is always semilinear. 
The regularity condition is guaranteed when the leading exponents
of the coefficients of the parametric linear program are sufficiently generic.
Some of these results can be extended to parametric semialgebraic
sets~\cite{alessandrini}, \cite{tropicalspectrahedra}, so that
the scope of our approach is not limited to linear programming. 

We refer to \Cref{fig:log-limit} for an illustration of the convergence of $\log_t \Kt$ to the tropically convex set $\Ktrop$ in the case where the sets $\K_t$ are polyhedra.
The semilinear character of the log-limit is visible on this picture.
Note that, for parametric linear programs, the log-limit has
a well studied combinatorial structure, being a tropical polyhedron;
in particular, it is a finite union of alcoved polyhedra, i.e., ordinary
polyhedra defined by constraints of the form $x_i-x_j\leq a_{ij}$, $ x_i\leq b_i$,
$-x_i\leq b'_i$, with $a_{ij},b_i,b'_i\in \R\cup\{\infty\}$, see~\cite{Develin2004}.

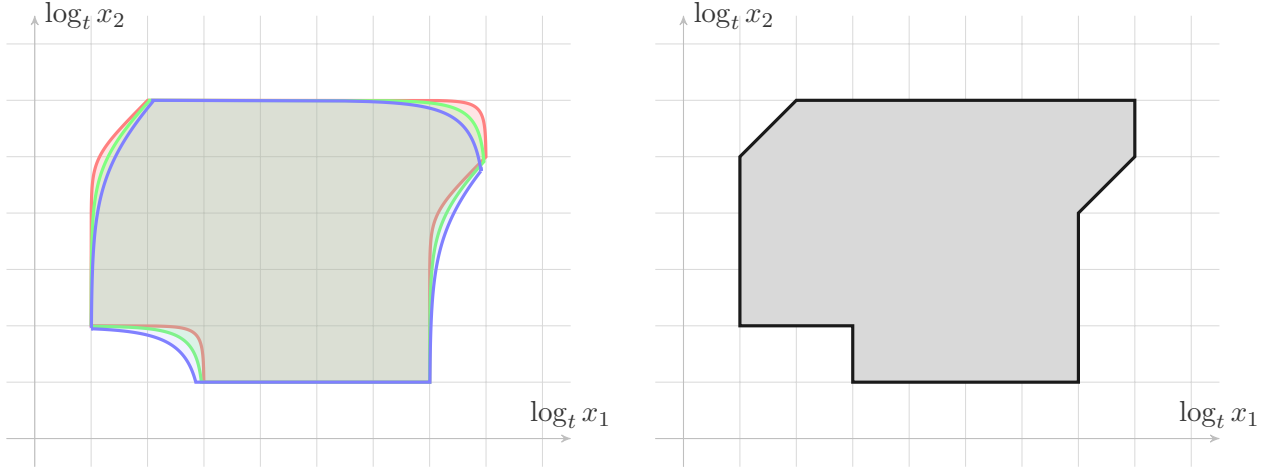
\begin{figure}[t!]

\begin{center}
\begin{tikzpicture}[scale=0.75]
\begin{scope}

	\draw[gray!30,very thin] (-0.5,-0.5) grid (9.5,7.5);
	\draw[gray!50,->,>=stealth'] (-0.5,0) -- (9.5,0) node[color=black!80,above] {$\log_t x_1$};
	\draw[gray!50,->,>=stealth'] (0,-0.5) -- (0,7.5) node[color=black!80,right] {$\log_t x_2$};
	
\filldraw[very thick, red!50, fill=red!50,fill opacity=0.2] (1,1.99998) -- (2.15377,1.99555) -- (2.30375,1.99102) -- (2.39162,1.9864) -- (2.454,1.98168) -- (2.5024,1.97685) -- (2.54195,1.97192) -- (2.5754,1.96687) -- (2.60438,1.9617) -- (2.62994,1.9564) -- (2.65281,1.95097) -- (2.67349,1.9454) -- (2.69238,1.93968) -- (2.70975,1.93381) -- (2.72584,1.92778) -- (2.74081,1.92158) -- (2.75482,1.91519) -- (2.76799,1.90861) -- (2.78039,1.90182) -- (2.79213,1.89481) -- (2.80327,1.88757) -- (2.81386,1.88008) -- (2.82396,1.87232) -- (2.83361,1.86428) -- (2.84285,1.85593) -- (2.85171,1.84724) -- (2.86023,1.83819) -- (2.86842,1.82874) -- (2.87632,1.81887) -- (2.88393,1.80852) -- (2.89129,1.79766) -- (2.89841,1.78622) -- (2.90531,1.77415) -- (2.91199,1.76137) -- (2.91847,1.74779) -- (2.92476,1.73331) -- (2.93088,1.71778) -- (2.93683,1.70107) -- (2.94262,1.68295) -- (2.94826,1.66319) -- (2.95375,1.64145) -- (2.95912,1.61728) -- (2.96435,1.59008) -- (2.96946,1.55899) -- (2.97445,1.52268) -- (2.97933,1.47906) -- (2.9841,1.42442) -- (2.98877,1.35123) -- (2.99334,1.24002) -- (2.99782,1) -- (2.99782,1) -- (6.1549,1) -- (6.30542,1) -- (6.39346,1) -- (6.45593,1) -- (6.50439,1) -- (6.54398,1) -- (6.57745,1) -- (6.60645,1) -- (6.63202,1) -- (6.6549,1) -- (6.6756,1) -- (6.69449,1) -- (6.71187,1) -- (6.72797,1) -- (6.74295,1) -- (6.75696,1) -- (6.77013,1) -- (6.78254,1) -- (6.79428,1) -- (6.80542,1) -- (6.81601,1) -- (6.82611,1) -- (6.83577,1) -- (6.84501,1) -- (6.85387,1) -- (6.86239,1) -- (6.87058,1) -- (6.87848,1) -- (6.8861,1) -- (6.89346,1) -- (6.90058,1) -- (6.90748,1) -- (6.91416,1) -- (6.92064,1) -- (6.92694,1) -- (6.93305,1) -- (6.939,1) -- (6.94479,1) -- (6.95043,1) -- (6.95593,1) -- (6.96129,1) -- (6.96653,1) -- (6.97164,1) -- (6.97663,1) -- (6.98151,1) -- (6.98628,1) -- (6.99095,1) -- (6.99552,1) -- (7,1) -- (7,1) -- (7.00258,3.03846) -- (7.00512,3.18896) -- (7.00764,3.27701) -- (7.01013,3.33947) -- (7.01259,3.38793) -- (7.01502,3.42752) -- (7.01743,3.46099) -- (7.0198,3.48999) -- (7.02216,3.51556) -- (7.02449,3.53844) -- (7.02679,3.55914) -- (7.02907,3.57803) -- (7.03132,3.59541) -- (7.03356,3.61151) -- (7.03577,3.62649) -- (7.03795,3.6405) -- (7.04012,3.65367) -- (7.04226,3.66608) -- (7.04439,3.67782) -- (7.04649,3.68896) -- (7.04857,3.69955) -- (7.05063,3.70965) -- (7.05268,3.7193) -- (7.0547,3.72855) -- (7.05671,3.73741) -- (7.05869,3.74593) -- (7.06066,3.75412) -- (7.06261,3.76202) -- (7.06455,3.76964) -- (7.06647,3.777) -- (7.06837,3.78412) -- (7.07025,3.79102) -- (7.07212,3.7977) -- (7.07397,3.80418) -- (7.0758,3.81047) -- (7.07763,3.81659) -- (7.07943,3.82254) -- (7.08122,3.82833) -- (7.083,3.83397) -- (7.08476,3.83947) -- (7.08651,3.84483) -- (7.08824,3.85006) -- (7.08996,3.85517) -- (7.09166,3.86017) -- (7.09336,3.86505) -- (7.09504,3.86982) -- (7.0967,3.87449) -- (7.09836,3.87906) -- (7.1,3.88354) -- (7.1,3.88354) -- (7.12234,3.93944) -- (7.14259,3.98386) -- (7.16112,4.02072) -- (7.17819,4.05222) -- (7.19401,4.07972) -- (7.20876,4.10413) -- (7.22257,4.12607) -- (7.23556,4.14599) -- (7.24781,4.16424) -- (7.25941,4.18108) -- (7.27041,4.1967) -- (7.28089,4.21127) -- (7.29089,4.22493) -- (7.30044,4.23778) -- (7.3096,4.24991) -- (7.31838,4.2614) -- (7.32682,4.27231) -- (7.33494,4.2827) -- (7.34278,4.29261) -- (7.35034,4.30209) -- (7.35764,4.31118) -- (7.36471,4.3199) -- (7.37155,4.32828) -- (7.37819,4.33635) -- (7.38463,4.34413) -- (7.39088,4.35165) -- (7.39696,4.35891) -- (7.40287,4.36594) -- (7.40863,4.37274) -- (7.41423,4.37934) -- (7.4197,4.38575) -- (7.42503,4.39197) -- (7.43024,4.39802) -- (7.43532,4.4039) -- (7.44028,4.40963) -- (7.44514,4.41521) -- (7.44989,4.42065) -- (7.45453,4.42596) -- (7.45908,4.43114) -- (7.46354,4.43621) -- (7.46791,4.44115) -- (7.47219,4.44599) -- (7.47639,4.45072) -- (7.4805,4.45535) -- (7.48455,4.45988) -- (7.48851,4.46432) -- (7.49241,4.46867) -- (7.49624,4.47294) -- (7.5,4.47712) -- (7.5,4.47712) -- (7.53625,4.51705) -- (7.5673,4.55076) -- (7.59447,4.57994) -- (7.61861,4.60565) -- (7.64033,4.62864) -- (7.66008,4.64943) -- (7.67818,4.6684) -- (7.69488,4.68585) -- (7.7104,4.70199) -- (7.72487,4.71702) -- (7.73845,4.73108) -- (7.75122,4.74428) -- (7.76328,4.75673) -- (7.77471,4.7685) -- (7.78557,4.77966) -- (7.79591,4.79028) -- (7.80578,4.8004) -- (7.81522,4.81007) -- (7.82427,4.81933) -- (7.83295,4.82821) -- (7.8413,4.83675) -- (7.84935,4.84496) -- (7.8571,4.85287) -- (7.86459,4.8605) -- (7.87183,4.86787) -- (7.87883,4.875) -- (7.88562,4.88191) -- (7.8922,4.8886) -- (7.89858,4.89509) -- (7.90479,4.90139) -- (7.91082,4.90752) -- (7.91669,4.91348) -- (7.9224,4.91927) -- (7.92797,4.92492) -- (7.9334,4.93043) -- (7.93869,4.93579) -- (7.94386,4.94103) -- (7.94891,4.94615) -- (7.95385,4.95114) -- (7.95867,4.95603) -- (7.96339,4.96081) -- (7.96801,4.96548) -- (7.97254,4.97006) -- (7.97697,4.97454) -- (7.98131,4.97893) -- (7.98557,4.98323) -- (7.98974,4.98745) -- (7.99384,4.99159) -- (7.99786,4.99566) -- (7.99786,4.99566) -- (7.99338,5.23863) -- (7.98881,5.35043) -- (7.98414,5.42387) -- (7.97937,5.47865) -- (7.97449,5.52235) -- (7.9695,5.55872) -- (7.96439,5.58986) -- (7.95915,5.61709) -- (7.95379,5.64128) -- (7.9483,5.66305) -- (7.94265,5.68283) -- (7.93686,5.70096) -- (7.93091,5.71769) -- (7.9248,5.73322) -- (7.9185,5.74772) -- (7.91202,5.76131) -- (7.90534,5.7741) -- (7.89844,5.78617) -- (7.89132,5.79761) -- (7.88396,5.80848) -- (7.87634,5.81883) -- (7.86844,5.82871) -- (7.86025,5.83816) -- (7.85173,5.84721) -- (7.84287,5.85591) -- (7.83363,5.86426) -- (7.82397,5.87231) -- (7.81387,5.88007) -- (7.80328,5.88756) -- (7.79214,5.89481) -- (7.7804,5.90182) -- (7.76799,5.9086) -- (7.75482,5.91519) -- (7.74081,5.92158) -- (7.72583,5.92779) -- (7.70973,5.93382) -- (7.69235,5.93969) -- (7.67346,5.94541) -- (7.65276,5.95098) -- (7.62988,5.95641) -- (7.60431,5.96171) -- (7.57531,5.96688) -- (7.54184,5.97193) -- (7.50225,5.97687) -- (7.45379,5.9817) -- (7.39132,5.98642) -- (7.30328,5.99104) -- (7.15276,5.99557) -- (2.00218,6) -- (2.00216,6) -- (1.99813,5.99593) -- (1.99402,5.99178) -- (1.98984,5.98755) -- (1.98557,5.98324) -- (1.98122,5.97884) -- (1.97678,5.97435) -- (1.97224,5.96976) -- (1.96761,5.96507) -- (1.96288,5.96029) -- (1.95804,5.95539) -- (1.95309,5.95038) -- (1.94803,5.94525) -- (1.94284,5.94) -- (1.93753,5.93462) -- (1.93209,5.9291) -- (1.9265,5.92343) -- (1.92077,5.91762) -- (1.91488,5.91164) -- (1.90883,5.9055) -- (1.9026,5.89918) -- (1.8962,5.89266) -- (1.88959,5.88595) -- (1.88278,5.87902) -- (1.87575,5.87187) -- (1.86848,5.86447) -- (1.86096,5.85681) -- (1.85318,5.84886) -- (1.8451,5.84062) -- (1.83671,5.83205) -- (1.82798,5.82313) -- (1.81889,5.81383) -- (1.8094,5.80411) -- (1.79947,5.79394) -- (1.78907,5.78326) -- (1.77815,5.77203) -- (1.76665,5.76019) -- (1.75451,5.74767) -- (1.74164,5.73438) -- (1.72797,5.72023) -- (1.71337,5.70509) -- (1.69773,5.68881) -- (1.68086,5.67121) -- (1.66258,5.65206) -- (1.64262,5.63106) -- (1.62063,5.6078) -- (1.59616,5.58175) -- (1.56859,5.55215) -- (1.53699,5.51786) -- (1.5,5.47712) -- (1.5,5.47712) -- (1.49624,5.47294) -- (1.49241,5.46867) -- (1.48851,5.46432) -- (1.48455,5.45988) -- (1.4805,5.45535) -- (1.47639,5.45072) -- (1.47219,5.44599) -- (1.46791,5.44115) -- (1.46354,5.43621) -- (1.45908,5.43114) -- (1.45453,5.42596) -- (1.44989,5.42065) -- (1.44514,5.41521) -- (1.44028,5.40963) -- (1.43532,5.4039) -- (1.43024,5.39802) -- (1.42503,5.39197) -- (1.4197,5.38575) -- (1.41423,5.37934) -- (1.40863,5.37274) -- (1.40287,5.36594) -- (1.39696,5.35891) -- (1.39088,5.35165) -- (1.38463,5.34413) -- (1.37819,5.33635) -- (1.37155,5.32828) -- (1.36471,5.3199) -- (1.35764,5.31118) -- (1.35034,5.30209) -- (1.34278,5.29261) -- (1.33494,5.2827) -- (1.32682,5.27231) -- (1.31838,5.2614) -- (1.3096,5.24991) -- (1.30044,5.23778) -- (1.29089,5.22493) -- (1.28089,5.21127) -- (1.27041,5.1967) -- (1.25941,5.18108) -- (1.24781,5.16424) -- (1.23556,5.14599) -- (1.22257,5.12607) -- (1.20876,5.10413) -- (1.19401,5.07972) -- (1.17819,5.05222) -- (1.16112,5.02072) -- (1.14259,4.98386) -- (1.12234,4.93944) -- (1.1,4.88354) -- (1.1,4.88354) -- (1.09836,4.87906) -- (1.0967,4.87449) -- (1.09504,4.86982) -- (1.09336,4.86505) -- (1.09166,4.86017) -- (1.08996,4.85517) -- (1.08824,4.85006) -- (1.08651,4.84483) -- (1.08476,4.83947) -- (1.083,4.83397) -- (1.08122,4.82833) -- (1.07943,4.82254) -- (1.07763,4.81659) -- (1.0758,4.81047) -- (1.07397,4.80418) -- (1.07212,4.7977) -- (1.07025,4.79102) -- (1.06837,4.78412) -- (1.06647,4.777) -- (1.06455,4.76964) -- (1.06261,4.76202) -- (1.06066,4.75412) -- (1.05869,4.74593) -- (1.05671,4.73741) -- (1.0547,4.72855) -- (1.05268,4.7193) -- (1.05063,4.70965) -- (1.04857,4.69955) -- (1.04649,4.68896) -- (1.04439,4.67782) -- (1.04226,4.66608) -- (1.04012,4.65367) -- (1.03795,4.6405) -- (1.03577,4.62649) -- (1.03356,4.61151) -- (1.03132,4.59541) -- (1.02907,4.57803) -- (1.02679,4.55914) -- (1.02449,4.53844) -- (1.02216,4.51556) -- (1.0198,4.48999) -- (1.01743,4.46099) -- (1.01502,4.42752) -- (1.01259,4.38793) -- (1.01013,4.33947) -- (1.00764,4.27701) -- (1.00512,4.18896) -- (1.00258,4.03846) -- (1,1.99998) -- cycle;

\filldraw[very thick, green!50, fill=green!50,fill opacity=0.2] (1,1.99123) -- (1.44968,1.98319) -- (1.66583,1.97501) -- (1.80949,1.96667) -- (1.91726,1.95816) -- (2.00353,1.94949) -- (2.07547,1.94064) -- (2.13717,1.9316) -- (2.19119,1.92238) -- (2.23922,1.91295) -- (2.28247,1.90331) -- (2.3218,1.89346) -- (2.35786,1.88337) -- (2.39115,1.87305) -- (2.42207,1.86247) -- (2.45094,1.85163) -- (2.47801,1.84052) -- (2.50348,1.82911) -- (2.52755,1.81739) -- (2.55035,1.80535) -- (2.57202,1.79296) -- (2.59265,1.78021) -- (2.61235,1.76708) -- (2.6312,1.75354) -- (2.64926,1.73956) -- (2.66659,1.72511) -- (2.68327,1.71017) -- (2.69932,1.69469) -- (2.71481,1.67865) -- (2.72976,1.66199) -- (2.74421,1.64466) -- (2.7582,1.62661) -- (2.77175,1.60778) -- (2.78489,1.5881) -- (2.79765,1.56748) -- (2.81004,1.54584) -- (2.82209,1.52306) -- (2.83381,1.49901) -- (2.84522,1.47356) -- (2.85635,1.44652) -- (2.86719,1.41769) -- (2.87777,1.38681) -- (2.8881,1.35356) -- (2.89819,1.31755) -- (2.90805,1.27829) -- (2.91769,1.23512) -- (2.92712,1.18718) -- (2.93635,1.13328) -- (2.94539,1.07173) -- (2.95424,1) -- (2.95424,1) -- (5.31211,1) -- (5.61218,1) -- (5.78796,1) -- (5.91274,1) -- (6.00955,1) -- (6.08867,1) -- (6.15557,1) -- (6.21353,1) -- (6.26465,1) -- (6.31039,1) -- (6.35177,1) -- (6.38954,1) -- (6.42429,1) -- (6.45646,1) -- (6.48642,1) -- (6.51444,1) -- (6.54076,1) -- (6.56558,1) -- (6.58905,1) -- (6.61132,1) -- (6.63251,1) -- (6.65271,1) -- (6.67201,1) -- (6.69049,1) -- (6.70822,1) -- (6.72525,1) -- (6.74163,1) -- (6.75743,1) -- (6.77266,1) -- (6.78738,1) -- (6.80162,1) -- (6.81541,1) -- (6.82877,1) -- (6.84173,1) -- (6.85432,1) -- (6.86655,1) -- (6.87845,1) -- (6.89003,1) -- (6.90131,1) -- (6.91231,1) -- (6.92303,1) -- (6.93349,1) -- (6.94371,1) -- (6.9537,1) -- (6.96345,1) -- (6.973,1) -- (6.98234,1) -- (6.99148,1) -- (7.00043,1) -- (7.00043,1) -- (7.00271,1.79684) -- (7.00498,2.06174) -- (7.00723,2.22509) -- (7.00947,2.34351) -- (7.01171,2.43646) -- (7.01392,2.51299) -- (7.01613,2.57803) -- (7.01833,2.63458) -- (7.02052,2.68461) -- (7.02269,2.72947) -- (7.02486,2.77013) -- (7.02701,2.8073) -- (7.02915,2.84154) -- (7.03128,2.87328) -- (7.03341,2.90286) -- (7.03552,2.93054) -- (7.03762,2.95657) -- (7.03971,2.98113) -- (7.04179,3.00437) -- (7.04386,3.02643) -- (7.04592,3.04743) -- (7.04798,3.06745) -- (7.05002,3.0866) -- (7.05205,3.10493) -- (7.05407,3.12253) -- (7.05609,3.13943) -- (7.05809,3.15571) -- (7.06009,3.17139) -- (7.06207,3.18653) -- (7.06405,3.20116) -- (7.06602,3.21531) -- (7.06798,3.22902) -- (7.06993,3.24231) -- (7.07187,3.2552) -- (7.0738,3.26772) -- (7.07573,3.27989) -- (7.07764,3.29172) -- (7.07955,3.30325) -- (7.08145,3.31447) -- (7.08334,3.32542) -- (7.08522,3.33609) -- (7.0871,3.34651) -- (7.08897,3.35668) -- (7.09082,3.36662) -- (7.09268,3.37634) -- (7.09452,3.38585) -- (7.09635,3.39515) -- (7.09818,3.40425) -- (7.1,3.41317) -- (7.1,3.41317) -- (7.1132,3.47388) -- (7.12601,3.52713) -- (7.13845,3.57456) -- (7.15054,3.61732) -- (7.16231,3.65624) -- (7.17377,3.69196) -- (7.18493,3.72496) -- (7.19581,3.75563) -- (7.20643,3.78428) -- (7.21679,3.81115) -- (7.22691,3.83646) -- (7.2368,3.86037) -- (7.24647,3.88303) -- (7.25593,3.90458) -- (7.26519,3.9251) -- (7.27426,3.94469) -- (7.28314,3.96344) -- (7.29184,3.98142) -- (7.30037,3.99868) -- (7.30874,4.01528) -- (7.31694,4.03127) -- (7.325,4.04669) -- (7.33291,4.06158) -- (7.34068,4.07598) -- (7.34831,4.08991) -- (7.35581,4.10342) -- (7.36318,4.11651) -- (7.37043,4.12922) -- (7.37756,4.14157) -- (7.38458,4.15358) -- (7.39148,4.16527) -- (7.39828,4.17665) -- (7.40497,4.18774) -- (7.41156,4.19855) -- (7.41805,4.2091) -- (7.42444,4.2194) -- (7.43074,4.22946) -- (7.43696,4.2393) -- (7.44308,4.24891) -- (7.44912,4.25832) -- (7.45508,4.26753) -- (7.46095,4.27655) -- (7.46675,4.28538) -- (7.47247,4.29404) -- (7.47812,4.30253) -- (7.48369,4.31085) -- (7.4892,4.31902) -- (7.49463,4.32704) -- (7.5,4.33491) -- (7.5,4.33491) -- (7.51655,4.35891) -- (7.5325,4.38165) -- (7.54788,4.40326) -- (7.56273,4.42385) -- (7.57709,4.4435) -- (7.59099,4.4623) -- (7.60446,4.48032) -- (7.61753,4.49763) -- (7.63022,4.51427) -- (7.64254,4.5303) -- (7.65452,4.54575) -- (7.66619,4.56068) -- (7.67754,4.57511) -- (7.68861,4.58907) -- (7.6994,4.6026) -- (7.70993,4.61572) -- (7.72022,4.62846) -- (7.73026,4.64083) -- (7.74008,4.65286) -- (7.74968,4.66457) -- (7.75907,4.67597) -- (7.76826,4.68708) -- (7.77727,4.69791) -- (7.78609,4.70847) -- (7.79473,4.71879) -- (7.80321,4.72887) -- (7.81152,4.73871) -- (7.81968,4.74834) -- (7.82768,4.75777) -- (7.83555,4.76699) -- (7.84327,4.77601) -- (7.85085,4.78486) -- (7.85831,4.79353) -- (7.86564,4.80203) -- (7.87285,4.81036) -- (7.87994,4.81854) -- (7.88692,4.82657) -- (7.89379,4.83445) -- (7.90055,4.84219) -- (7.90721,4.8498) -- (7.91376,4.85727) -- (7.92022,4.86462) -- (7.92659,4.87185) -- (7.93286,4.87896) -- (7.93904,4.88595) -- (7.94514,4.89283) -- (7.95115,4.89961) -- (7.95708,4.90628) -- (7.96293,4.91285) -- (7.96293,4.91285) -- (7.95397,5.00241) -- (7.94483,5.07662) -- (7.93549,5.13998) -- (7.92595,5.19527) -- (7.91619,5.2443) -- (7.9062,5.28836) -- (7.89598,5.32835) -- (7.88552,5.36497) -- (7.87479,5.39874) -- (7.8638,5.43008) -- (7.85252,5.4593) -- (7.84093,5.48668) -- (7.82904,5.51244) -- (7.8168,5.53675) -- (7.80421,5.55978) -- (7.79125,5.58164) -- (7.77788,5.60246) -- (7.76409,5.62233) -- (7.74985,5.64132) -- (7.73513,5.65952) -- (7.71989,5.67699) -- (7.7041,5.69378) -- (7.68771,5.70995) -- (7.67067,5.72554) -- (7.65294,5.74058) -- (7.63446,5.75512) -- (7.61516,5.7692) -- (7.59495,5.78283) -- (7.57376,5.79604) -- (7.55149,5.80887) -- (7.52801,5.82132) -- (7.50318,5.83343) -- (7.47685,5.84521) -- (7.44883,5.85668) -- (7.41886,5.86786) -- (7.38668,5.87875) -- (7.35192,5.88938) -- (7.31413,5.89975) -- (7.27273,5.90988) -- (7.22698,5.91979) -- (7.17583,5.92947) -- (7.11783,5.93893) -- (7.05089,5.9482) -- (6.97171,5.95728) -- (6.8748,5.96616) -- (6.74986,5.97487) -- (6.57377,5.98341) -- (6.27276,5.99178) -- (2.03743,6) -- (2.03743,5.99564) -- (2.03109,5.98865) -- (2.02466,5.98155) -- (2.01813,5.97434) -- (2.0115,5.967) -- (2.00477,5.95954) -- (1.99793,5.95195) -- (1.99099,5.94422) -- (1.98393,5.93635) -- (1.97675,5.92833) -- (1.96946,5.92017) -- (1.96204,5.91185) -- (1.95449,5.90336) -- (1.9468,5.89471) -- (1.93898,5.88588) -- (1.93102,5.87687) -- (1.9229,5.86767) -- (1.91463,5.85827) -- (1.9062,5.84866) -- (1.89761,5.83883) -- (1.88884,5.82877) -- (1.87989,5.81848) -- (1.87075,5.80794) -- (1.86142,5.79713) -- (1.85188,5.78605) -- (1.84212,5.77468) -- (1.83214,5.763) -- (1.82193,5.751) -- (1.81147,5.73866) -- (1.80075,5.72596) -- (1.78977,5.71287) -- (1.77849,5.69938) -- (1.76692,5.68545) -- (1.75503,5.67107) -- (1.7428,5.65619) -- (1.73022,5.64078) -- (1.71727,5.62481) -- (1.70391,5.60823) -- (1.69014,5.59099) -- (1.67591,5.57303) -- (1.6612,5.55431) -- (1.64597,5.53473) -- (1.63019,5.51424) -- (1.61382,5.49273) -- (1.5968,5.47009) -- (1.57909,5.44622) -- (1.56063,5.42095) -- (1.54134,5.39412) -- (1.52116,5.36552) -- (1.5,5.33491) -- (1.5,5.33491) -- (1.49463,5.32704) -- (1.4892,5.31902) -- (1.48369,5.31085) -- (1.47812,5.30253) -- (1.47247,5.29404) -- (1.46675,5.28538) -- (1.46095,5.27655) -- (1.45508,5.26753) -- (1.44912,5.25832) -- (1.44308,5.24891) -- (1.43696,5.2393) -- (1.43074,5.22946) -- (1.42444,5.2194) -- (1.41805,5.2091) -- (1.41156,5.19855) -- (1.40497,5.18774) -- (1.39828,5.17665) -- (1.39148,5.16527) -- (1.38458,5.15358) -- (1.37756,5.14157) -- (1.37043,5.12922) -- (1.36318,5.11651) -- (1.35581,5.10342) -- (1.34831,5.08991) -- (1.34068,5.07598) -- (1.33291,5.06158) -- (1.325,5.04669) -- (1.31694,5.03127) -- (1.30874,5.01528) -- (1.30037,4.99868) -- (1.29184,4.98142) -- (1.28314,4.96344) -- (1.27426,4.94469) -- (1.26519,4.9251) -- (1.25593,4.90458) -- (1.24647,4.88303) -- (1.2368,4.86037) -- (1.22691,4.83646) -- (1.21679,4.81115) -- (1.20643,4.78428) -- (1.19581,4.75563) -- (1.18493,4.72496) -- (1.17377,4.69196) -- (1.16231,4.65624) -- (1.15054,4.61732) -- (1.13845,4.57456) -- (1.12601,4.52713) -- (1.1132,4.47388) -- (1.1,4.41317) -- (1.1,4.41317) -- (1.09818,4.40425) -- (1.09635,4.39515) -- (1.09452,4.38584) -- (1.09267,4.37634) -- (1.09082,4.36662) -- (1.08896,4.35668) -- (1.0871,4.3465) -- (1.08522,4.33608) -- (1.08334,4.32541) -- (1.08145,4.31446) -- (1.07955,4.30324) -- (1.07764,4.29171) -- (1.07573,4.27987) -- (1.0738,4.2677) -- (1.07187,4.25518) -- (1.06993,4.24229) -- (1.06798,4.229) -- (1.06602,4.21529) -- (1.06405,4.20114) -- (1.06207,4.18651) -- (1.06008,4.17137) -- (1.05809,4.15568) -- (1.05608,4.1394) -- (1.05407,4.12249) -- (1.05205,4.1049) -- (1.05001,4.08656) -- (1.04797,4.06741) -- (1.04592,4.04738) -- (1.04386,4.02639) -- (1.04179,4.00432) -- (1.03971,3.98107) -- (1.03761,3.95651) -- (1.03551,3.93048) -- (1.0334,3.90278) -- (1.03128,3.8732) -- (1.02915,3.84145) -- (1.027,3.8072) -- (1.02485,3.77001) -- (1.02268,3.72934) -- (1.02051,3.68447) -- (1.01832,3.63441) -- (1.01613,3.57783) -- (1.01392,3.51275) -- (1.0117,3.43618) -- (1.00947,3.34315) -- (1.00722,3.2246) -- (1.00497,3.06101) -- (1.0027,2.79548) -- (1.00043,1.99123) -- cycle;

\filldraw[very thick, blue!50, fill=blue!50,fill opacity=0.1] (1,1.94819) -- (1.20177,1.93819) -- (1.35384,1.92802) -- (1.47591,1.91768) -- (1.57789,1.90717) -- (1.66546,1.89648) -- (1.74221,1.8856) -- (1.81051,1.87453) -- (1.87204,1.86325) -- (1.92802,1.85177) -- (1.97937,1.84007) -- (2.0268,1.82815) -- (2.07087,1.81599) -- (2.11201,1.80359) -- (2.1506,1.79094) -- (2.18693,1.77802) -- (2.22126,1.76483) -- (2.25379,1.75136) -- (2.2847,1.73758) -- (2.31414,1.7235) -- (2.34225,1.70909) -- (2.36914,1.69433) -- (2.39492,1.67922) -- (2.41967,1.66372) -- (2.44348,1.64784) -- (2.4664,1.63153) -- (2.48851,1.61479) -- (2.50986,1.59758) -- (2.5305,1.57988) -- (2.55048,1.56167) -- (2.56983,1.5429) -- (2.5886,1.52355) -- (2.60682,1.50358) -- (2.62452,1.48294) -- (2.64173,1.4616) -- (2.65847,1.43949) -- (2.67478,1.41657) -- (2.69067,1.39277) -- (2.70616,1.36802) -- (2.72128,1.34225) -- (2.73604,1.31536) -- (2.75045,1.28726) -- (2.76454,1.25782) -- (2.77831,1.22692) -- (2.79179,1.1944) -- (2.80498,1.16008) -- (2.8179,1.12376) -- (2.83055,1.08518) -- (2.84295,1.04405) -- (2.85511,1) -- (2.85511,1) -- (4.62324,1) -- (5.0356,1) -- (5.2813,1) -- (5.45691,1) -- (5.59367,1) -- (5.70569,1) -- (5.80057,1) -- (5.88285,1) -- (5.95551,1) -- (6.02055,1) -- (6.07942,1) -- (6.1332,1) -- (6.18269,1) -- (6.22852,1) -- (6.27121,1) -- (6.31115,1) -- (6.34868,1) -- (6.38407,1) -- (6.41755,1) -- (6.44932,1) -- (6.47954,1) -- (6.50837,1) -- (6.53591,1) -- (6.56228,1) -- (6.58758,1) -- (6.61189,1) -- (6.63529,1) -- (6.65784,1) -- (6.67959,1) -- (6.70061,1) -- (6.72094,1) -- (6.74063,1) -- (6.75972,1) -- (6.77823,1) -- (6.79621,1) -- (6.81368,1) -- (6.83068,1) -- (6.84722,1) -- (6.86333,1) -- (6.87904,1) -- (6.89436,1) -- (6.90931,1) -- (6.92391,1) -- (6.93817,1) -- (6.95212,1) -- (6.96575,1) -- (6.9791,1) -- (6.99216,1) -- (7.00496,1) -- (7.00495,1) -- (7.00704,1.22008) -- (7.00913,1.38227) -- (7.01121,1.51076) -- (7.01328,1.61719) -- (7.01534,1.70802) -- (7.0174,1.78725) -- (7.01945,1.85751) -- (7.0215,1.92063) -- (7.02353,1.97792) -- (7.02557,2.03037) -- (7.02759,2.07874) -- (7.02961,2.12361) -- (7.03162,2.16546) -- (7.03362,2.20467) -- (7.03562,2.24155) -- (7.03761,2.27636) -- (7.0396,2.30933) -- (7.04158,2.34063) -- (7.04355,2.37043) -- (7.04552,2.39887) -- (7.04748,2.42606) -- (7.04943,2.45212) -- (7.05138,2.47712) -- (7.05332,2.50116) -- (7.05526,2.5243) -- (7.05719,2.54661) -- (7.05911,2.56814) -- (7.06103,2.58896) -- (7.06294,2.6091) -- (7.06485,2.62861) -- (7.06675,2.64752) -- (7.06864,2.66588) -- (7.07053,2.68371) -- (7.07242,2.70104) -- (7.07429,2.7179) -- (7.07617,2.73431) -- (7.07803,2.75031) -- (7.07989,2.7659) -- (7.08175,2.78111) -- (7.0836,2.79596) -- (7.08544,2.81046) -- (7.08728,2.82462) -- (7.08911,2.83848) -- (7.09094,2.85203) -- (7.09276,2.86529) -- (7.09458,2.87828) -- (7.09639,2.891) -- (7.0982,2.90346) -- (7.1,2.91568) -- (7.1,2.91568) -- (7.11135,2.98834) -- (7.1225,3.05338) -- (7.13346,3.11226) -- (7.14422,3.16604) -- (7.1548,3.21553) -- (7.16521,3.26137) -- (7.17544,3.30406) -- (7.18551,3.344) -- (7.19541,3.38153) -- (7.20516,3.41692) -- (7.21476,3.4504) -- (7.22422,3.48217) -- (7.23353,3.5124) -- (7.2427,3.54122) -- (7.25174,3.56877) -- (7.26066,3.59514) -- (7.26944,3.62044) -- (7.2781,3.64475) -- (7.28665,3.66815) -- (7.29508,3.6907) -- (7.30339,3.71245) -- (7.3116,3.73347) -- (7.31969,3.75381) -- (7.32769,3.7735) -- (7.33558,3.79258) -- (7.34338,3.81109) -- (7.35107,3.82907) -- (7.35868,3.84655) -- (7.36619,3.86354) -- (7.37361,3.88009) -- (7.38094,3.8962) -- (7.38819,3.91191) -- (7.39535,3.92723) -- (7.40244,3.94218) -- (7.40944,3.95678) -- (7.41636,3.97104) -- (7.42321,3.98498) -- (7.42999,3.99862) -- (7.43669,4.01197) -- (7.44332,4.02503) -- (7.44988,4.03783) -- (7.45637,4.05036) -- (7.46279,4.06265) -- (7.46915,4.0747) -- (7.47544,4.08653) -- (7.48167,4.09813) -- (7.48784,4.10951) -- (7.49395,4.1207) -- (7.5,4.13168) -- (7.5,4.13168) -- (7.51178,4.15283) -- (7.52334,4.17329) -- (7.53469,4.19309) -- (7.54584,4.21228) -- (7.55679,4.2309) -- (7.56755,4.24897) -- (7.57813,4.26653) -- (7.58853,4.28361) -- (7.59876,4.30024) -- (7.60882,4.31643) -- (7.61872,4.33221) -- (7.62847,4.3476) -- (7.63807,4.36261) -- (7.64752,4.37727) -- (7.65683,4.3916) -- (7.666,4.4056) -- (7.67504,4.41929) -- (7.68395,4.43269) -- (7.69273,4.4458) -- (7.70139,4.45865) -- (7.70993,4.47123) -- (7.71836,4.48356) -- (7.72667,4.49566) -- (7.73488,4.50752) -- (7.74297,4.51916) -- (7.75096,4.53059) -- (7.75886,4.54181) -- (7.76665,4.55283) -- (7.77434,4.56366) -- (7.78194,4.5743) -- (7.78945,4.58476) -- (7.79687,4.59505) -- (7.8042,4.60518) -- (7.81145,4.61514) -- (7.81861,4.62494) -- (7.8257,4.63459) -- (7.8327,4.6441) -- (7.83962,4.65346) -- (7.84647,4.66268) -- (7.85324,4.67177) -- (7.85994,4.68072) -- (7.86657,4.68955) -- (7.87313,4.69826) -- (7.87962,4.70684) -- (7.88604,4.71531) -- (7.8924,4.72366) -- (7.89869,4.7319) -- (7.90492,4.74004) -- (7.91109,4.74807) -- (7.91109,4.74797) -- (7.89827,4.82537) -- (7.88519,4.89418) -- (7.87183,4.95613) -- (7.85818,5.01246) -- (7.84422,5.0641) -- (7.82993,5.11178) -- (7.81531,5.15606) -- (7.80034,5.19739) -- (7.785,5.23615) -- (7.76927,5.27263) -- (7.75314,5.30708) -- (7.73657,5.33972) -- (7.71955,5.37074) -- (7.70204,5.40028) -- (7.68404,5.42848) -- (7.66549,5.45545) -- (7.64637,5.4813) -- (7.62665,5.50612) -- (7.60628,5.52999) -- (7.58522,5.55297) -- (7.56342,5.57513) -- (7.54082,5.59653) -- (7.51738,5.61722) -- (7.49301,5.63724) -- (7.46765,5.65663) -- (7.44121,5.67544) -- (7.4136,5.6937) -- (7.3847,5.71143) -- (7.35439,5.72867) -- (7.32253,5.74545) -- (7.28894,5.76178) -- (7.25344,5.7777) -- (7.21578,5.79322) -- (7.17569,5.80836) -- (7.13283,5.82314) -- (7.0868,5.83758) -- (7.03708,5.85169) -- (6.98304,5.86549) -- (6.92385,5.87898) -- (6.85841,5.89219) -- (6.78526,5.90513) -- (6.70234,5.9178) -- (6.60663,5.93021) -- (6.49343,5.94239) -- (6.35491,5.95433) -- (6.17638,5.96604) -- (5.92489,5.97754) -- (5.49551,5.98883) -- (2.09222,5.99992) -- (2.09222,5.97464) -- (2.08438,5.96515) -- (2.07644,5.95551) -- (2.06839,5.94573) -- (2.06024,5.93579) -- (2.05199,5.92568) -- (2.04362,5.91541) -- (2.03513,5.90497) -- (2.02653,5.89434) -- (2.01781,5.88354) -- (2.00897,5.87254) -- (1.99999,5.86134) -- (1.99089,5.84994) -- (1.98165,5.83833) -- (1.97227,5.82649) -- (1.96274,5.81442) -- (1.95307,5.80212) -- (1.94325,5.78956) -- (1.93326,5.77675) -- (1.92312,5.76367) -- (1.9128,5.7503) -- (1.90232,5.73665) -- (1.89165,5.72268) -- (1.88079,5.70839) -- (1.86975,5.69377) -- (1.8585,5.6788) -- (1.84704,5.66345) -- (1.83537,5.64772) -- (1.82348,5.63158) -- (1.81135,5.61501) -- (1.79899,5.59798) -- (1.78637,5.58047) -- (1.77349,5.56246) -- (1.76034,5.54391) -- (1.7469,5.52479) -- (1.73317,5.50506) -- (1.71912,5.48468) -- (1.70475,5.46361) -- (1.69005,5.4418) -- (1.67498,5.4192) -- (1.65954,5.39575) -- (1.64371,5.37137) -- (1.62746,5.346) -- (1.61077,5.31955) -- (1.59363,5.29193) -- (1.576,5.26301) -- (1.55785,5.23269) -- (1.53916,5.20081) -- (1.51989,5.16721) -- (1.5,5.13168) -- (1.5,5.13168) -- (1.49395,5.1207) -- (1.48784,5.10951) -- (1.48167,5.09813) -- (1.47544,5.08653) -- (1.46915,5.0747) -- (1.46279,5.06265) -- (1.45637,5.05036) -- (1.44988,5.03783) -- (1.44332,5.02503) -- (1.43669,5.01197) -- (1.42999,4.99862) -- (1.42321,4.98498) -- (1.41636,4.97104) -- (1.40944,4.95678) -- (1.40244,4.94218) -- (1.39535,4.92723) -- (1.38819,4.91191) -- (1.38094,4.8962) -- (1.37361,4.88009) -- (1.36619,4.86354) -- (1.35868,4.84655) -- (1.35107,4.82907) -- (1.34338,4.81109) -- (1.33558,4.79258) -- (1.32769,4.7735) -- (1.31969,4.75381) -- (1.3116,4.73347) -- (1.30339,4.71245) -- (1.29508,4.6907) -- (1.28665,4.66815) -- (1.2781,4.64475) -- (1.26944,4.62044) -- (1.26066,4.59514) -- (1.25174,4.56877) -- (1.2427,4.54122) -- (1.23353,4.5124) -- (1.22422,4.48217) -- (1.21476,4.4504) -- (1.20516,4.41692) -- (1.19541,4.38153) -- (1.18551,4.344) -- (1.17544,4.30406) -- (1.16521,4.26137) -- (1.1548,4.21553) -- (1.14422,4.16604) -- (1.13346,4.11226) -- (1.1225,4.05338) -- (1.11135,3.98834) -- (1.1,3.91568) -- (1.1,3.91568) -- (1.09819,3.90341) -- (1.09638,3.8909) -- (1.09456,3.87813) -- (1.09274,3.86509) -- (1.09091,3.85177) -- (1.08907,3.83816) -- (1.08723,3.82425) -- (1.08538,3.81002) -- (1.08353,3.79545) -- (1.08168,3.78053) -- (1.07981,3.76525) -- (1.07795,3.74958) -- (1.07607,3.73351) -- (1.07419,3.717) -- (1.07231,3.70005) -- (1.07042,3.68263) -- (1.06852,3.6647) -- (1.06662,3.64623) -- (1.06471,3.62721) -- (1.0628,3.60758) -- (1.06088,3.58731) -- (1.05895,3.56635) -- (1.05702,3.54467) -- (1.05508,3.5222) -- (1.05314,3.49889) -- (1.05119,3.47467) -- (1.04923,3.44946) -- (1.04727,3.42319) -- (1.0453,3.39577) -- (1.04332,3.36707) -- (1.04134,3.33698) -- (1.03936,3.30537) -- (1.03736,3.27205) -- (1.03536,3.23685) -- (1.03335,3.19954) -- (1.03134,3.15984) -- (1.02932,3.11743) -- (1.02729,3.07191) -- (1.02526,3.02279) -- (1.02322,2.96945) -- (1.02118,2.9111) -- (1.01912,2.8467) -- (1.01706,2.77484) -- (1.015,2.69357) -- (1.01292,2.60005) -- (1.01084,2.48991) -- (1.00875,2.35595) -- (1.00666,2.18494) -- (1.00456,1.94819) -- cycle;

\end{scope}

\begin{scope}[shift={(11.5,0)}]
	\draw[gray!30,very thin] (-0.5,-0.5) grid (9.5,7.5);
	\draw[gray!50,->,>=stealth'] (-0.5,0) -- (9.5,0) node[color=black!80,above] {$\log_t x_1$};
	\draw[gray!50,->,>=stealth'] (0,-0.5) -- (0,7.5) node[color=black!80,right] {$\log_t x_2$};	

\filldraw[very thick, black!90, fill=gray!30] (1,2) -- (3,2) -- (3,1) -- (7,1) -- (7,4) -- (8,5) -- (8,6) -- (2,6) -- (1,5) -- cycle;
\end{scope}
\end{tikzpicture}
\end{center}	

\caption{Left: the image under the map $\log_t$ of the polyhedron defined by the constraints $x_1 + t x_2 \geq t^3$, $x_2 \geq t^{-10} x_1 + t$, $x_2 + t^4 \geq t^{-3} x_1$, $t^8 \geq x_1 + t^2 x_2$, $t^4 x_1 \geq x_2 + t^5$, for $t = 5$ (blue), $10$ (green) and $100$ (red). Right: the log-limit, which is a tropical polytope.}\label{fig:log-limit}
\end{figure}

We now introduce the definition of the tropical central path that will be used in the characterization of the log-limit of the central paths $(\cpt)_t$. To this extent, note that any tropically convex set $S$ is closed under the supremum $u \tplus v$ of any of its two points $u, v \in S$. As a consequence, in the case where $S$ is a closed and bounded from above sets,
the supremum of (all the points of) $S$ belongs to $S$. We call this point the \emph{tropical barycenter} of $S$ (by analogy with the barycenter w.r.t.~the uniform measure), and we denote it by $\tbar(S)$. The \emph{tropical central path} (of the parametric family~\eqref{eq:convex_t}) is defined as the function that maps $\lambda \in \R$ to the tropical barycenter of the set $\Ktrop_\lambda \coloneqq \Ktrop \cap \{ u \in \T^n \colon \inpT{\ctrop}{u} \leq -\lambda \}$, where we recall that $\inpT{x}{y} = \bigvee_i (x_i + y_i)$ (\ie, it is the tropical analogue of the inner product of $x$ and $y$). \Cref{fig:cp} provides an illustration.

\begin{figure}[t]
\begin{center}
\begin{tikzpicture}[scale=0.9]
\draw[gray!30,very thin] (0.5,0.5) grid (-6.5,-6.5);
\draw[gray!50,->,>=stealth'] (-6.5,0) -- (0.5,0) node[color=black!80,right] {$x_1$};
\draw[gray!50,->,>=stealth'] (0,-6.5) -- (0,0.5) node[color=black!80,above] {$x_2$};

\fill[gray!30] (-6.5,0) -- (0,0) -- (0,-1) -- (-1,-2) -- (-1,-3) -- (-4.5,-6.5) -- (-6.5,-6.5);
\draw[black,very thick] (-6.5,0) -- (0,0) -- (0,-1) -- (-1,-2) -- (-1,-3) -- (-4.5,-6.5);
\draw[dashed,orange, line width=3pt] (0,0) -- (0,-1) -- (-1,-2) -- (-1,-3) -- (-4.5,-6.5);
\end{tikzpicture}
\end{center}	
\caption{The tropical central path of~$\cex_2(t)$. The log-limit of the feasible is represented in gray, and the tropical central path is the dotted piecewise linear curve in orange.}\label{fig:cp}
\end{figure}
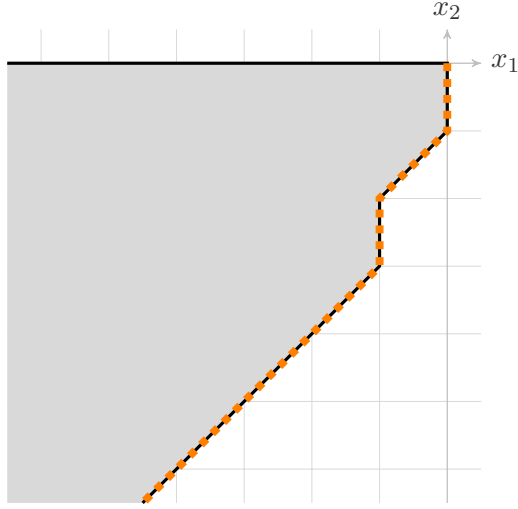

The tropical central path satisfies the following monotonicity and Lipschitzianity properties that will be useful in the proofs:
\begin{lemma}
\label{lemma:cptrop_monoticity}
Let $\lambda' \leq \lambda$, then
\[
\cptrop(\lambda) \leq \cptrop(\lambda') \leq \cptrop(\lambda) + (\lambda - \lambda') \onevector \, .
\]
\end{lemma}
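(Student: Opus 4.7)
\medskip

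\noindent\textbf{Plan of proof.} The first inequality is a matter of set inclusion. When $\lambda'\leq\lambda$, the constraint $\inpT{\ctrop}{u}\leq -\lambda$ is more restrictive than $\inpT{\ctrop}{u}\leq-\lambda'$, so $\Ktrop_\lambda\subseteq\Ktrop_{\lambda'}$. The tropical barycenter is defined as the entrywise supremum, and the supremum over a smaller set is entrywise dominated by the supremum over the larger one, yielding $\cptrop(\lambda)=\tbar(\Ktrop_\lambda)\leq\tbar(\Ktrop_{\lambda'})=\cptrop(\lambda')$.

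\medskip
For the second inequality, set $\delta\coloneqq\lambda-\lambda'\geq0$. The key step is a ``downward translation'' property of $\Ktrop$: for every $u\in\Ktrop$ and every $\delta\geq 0$, the translated point $u-\delta\onevector$ still lies in $\Ktrop$. This is because $0\in\Kt$ and $\Kt$ is convex, so $\alpha x\in\Kt$ whenever $x\in\Kt$ and $\alpha\in[0,1]$; passing to the log-limit (or equivalently, observing that the tropical segment between any $u\in\Ktrop$ and $(-\infty)^n\in\Ktrop$ lies in the tropically convex set $\Ktrop$ by \Cref{prop:trop_conv}) yields the stated property.

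\medskip
Given this, take any $u\in\Ktrop_{\lambda'}$. Since $\inpT{\ctrop}{v}=\max_i(\ctrop_i+v_i)$, one has the elementary identity $\inpT{\ctrop}{u-\delta\onevector}=\inpT{\ctrop}{u}-\delta\leq -\lambda'-\delta=-\lambda$. Combined with $u-\delta\onevector\in\Ktrop$ from the previous step, this shows $u-\delta\onevector\in\Ktrop_\lambda$, so by definition of the tropical barycenter, $u-\delta\onevector\leq\cptrop(\lambda)$, i.e.\ $u\leq\cptrop(\lambda)+(\lambda-\lambda')\onevector$. Taking the entrywise supremum over $u\in\Ktrop_{\lambda'}$ yields $\cptrop(\lambda')\leq\cptrop(\lambda)+(\lambda-\lambda')\onevector$, as required.

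\medskip
The only non-trivial ingredient is the downward translation property of $\Ktrop$; everything else is straightforward manipulation of the definitions of $\inpT{\cdot}{\cdot}$ and of the tropical barycenter. I expect no serious obstacle.
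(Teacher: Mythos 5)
Your proposal is correct and follows essentially the same route as the paper: the first inequality via monotonicity of the tropical barycenter under the inclusion $\Ktrop_\lambda \subseteq \Ktrop_{\lambda'}$, and the second via the downward-translation property of $\Ktrop$ (coming from $0 \in \Kt$, i.e.\ $(-\infty)\onevector \in \Ktrop$, together with tropical convexity from \Cref{prop:trop_conv}) combined with the tropical linearity $\inpT{\ctrop}{u-\delta\onevector}=\inpT{\ctrop}{u}-\delta$. The only cosmetic difference is that the paper translates the single point $\cptrop(\lambda')$ while you translate an arbitrary $u\in\Ktrop_{\lambda'}$ and take the supremum, which is an equivalent formulation.
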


\begin{proof}
Since $\inpT{c}{\cptrop(\lambda)} \leq -\lambda \leq -\lambda'$, then $\cptrop(\lambda) \in \Ktrop_{\lambda'}$, and by definition of the tropical barycenter, $\cptrop(\lambda) \leq \cptrop(\lambda')$. Furthermore, $-\infty \in \Ktrop$ (since $0 \in \Kt$ for all $t > 1$) and $\Ktrop$ is tropically convex by \cref{prop:trop_conv}. Then $\cptrop(\lambda') + (\lambda'-\lambda) \onevector \in \Ktrop$. By linearity, 
\[
\inpT{c}{\cptrop(\lambda') + (\lambda'-\lambda) \onevector} = \inpT{c}{\cptrop(\lambda')} + (\lambda' - \lambda) \leq -\lambda \, .
\]
Therefore $\cptrop(\lambda') + (\lambda'-\lambda) \onevector \leq \cptrop(\lambda)$.
\end{proof}

We can now show that the logarithmic deformation of the central path converges uniformly to the tropical central path, and provide an estimate of the distance between them. Following Assumption~\ref{assmpt:assmpt_K_c}, we define $\delta_\K(t) \coloneqq \distInf(\log_t \Kt;\Ktrop)$ and $\delta_c(t) \coloneqq \distInf(\log_t c_t;\ctrop)$, and we introduce $\delta^* > 0$ such that for all $t > 1$, 
\[
\delta^* \geq \max(\delta_\K(t) , \delta_c(t)) \log t \, .
\]
Moreover, we will make repeated use of the following inequality for $x, y \in \R_{>0}^n$,
\begin{equation}
	\label{eq:log_ineq}
	\tag{\textsc{Log-Ineq}}
	\inpT{\log_t x}{\log_t y} \leq \log_t \big(\inp{x}{y}\big) \leq \inpT{\log_t x}{\log_t y} + \log_t n \, .
\end{equation}

\begin{theorem}[Refinement of \Cref{thm:unif_conv_cp_intro}]
  \label{thm:unif_conv_cp}
 The family of functions $(\lambda \mapsto \log_t \cpt(t^\lambda))_{t}$ converges uniformly to the map $\lambda \mapsto \cptrop(\lambda)$ as $t\to\infty$.
 More precisely, there exists constants $\Gamma>0$ and $t_0>0$ such that for
 all $t\geq t_0$ and for all $\lambda \in \R$,
\begin{align}
  \distInf(\log_t \cpt(t^\lambda), \cptrop(\lambda) )
  \leq \frac{\Gamma}{\log t}
  \, .\label{e:cvg-unif}
\end{align}
Moreover, the constant $\Gamma$
depends only on $\thet$, $n$, $\delta^*$ and on $\Ktrop$.
\end{theorem}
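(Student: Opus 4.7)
I would split the inequality~\eqref{e:cvg-unif} into the two componentwise bounds $\log_t \cpt(t^\lambda) \leq \cptrop(\lambda) + O(1/\log t)\,\onevector$ and $\log_t \cpt(t^\lambda) \geq \cptrop(\lambda) - O(1/\log t)\,\onevector$, with constants depending only on the stated data. The upper bound is the more classical half: the standard complexity estimate for self-concordant barriers yields $\inp{c_t}{\cpt(t^\lambda)} \leq \thet/t^\lambda$ (using $\val_t = 0$, since $0 \in \Kt$ and $c_t \geq 0$). Applying $\log_t$ and \eqref{eq:log_ineq} and then using $\distInf(\log_t c_t, \ctrop) = O(1/\log t)$ yields $\inpT{\ctrop}{\log_t \cpt(t^\lambda)} \leq -\lambda + O(1/\log t)$. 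Combining with the Hausdorff convergence $\log_t \Kt \to \Ktrop$ places $\log_t \cpt(t^\lambda)$ within an $O(1/\log t)$ neighborhood of $\Ktrop_{\lambda - O(1/\log t)}$. Since $\cptrop(\mu)$ is the componentwise supremum of $\Ktrop_\mu$ and, by \Cref{lemma:cptrop_monoticity}, $\cptrop(\lambda - \epsilon) \leq \cptrop(\lambda) + \epsilon\,\onevector$, the upper bound follows.

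The harder lower bound combines the log-like properties of \Cref{sec:log_like_barrier} with a careful competitor construction. Write $x^* = \cpt(t^\lambda)$ and $F(x) = t^\lambda \inp{c_t}{x} + f_t(x)$. First, I would construct a point $\tilde y \in \interior \Kt$ with $\distInf(\log_t \tilde y, \cptrop(\lambda)) = O(1/\log t)$. Here the tropical convexity of $\Ktrop$ (\Cref{prop:trop_conv}) together with $-\infty \in \Ktrop$ is used to push $\cptrop(\lambda)$ into $\interior \Ktrop$ by a shift of order $O(1/\log t)$ along $-\onevector$; the regularity assumption \Cref{assmpt:assmpt_K_c}(ii) then guarantees an actual interior point of $\Ktrop$ nearby, and the Hausdorff convergence of $\log_t \Kt$ produces the required $\tilde y$.

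I would next exploit the optimality inequality $F(x^*) \leq F(\tilde y)$ jointly with \Cref{cor:log_bound} applied at $x = \tilde y$, $y = x^*$. After subtracting $f_t(\tilde y)$ and rearranging, the $f_t$ terms cancel and one is left with
\[
\tfrac{1}{n}\sum_i \log(x^*_i / \tilde y_i) \;\geq\; \inp{t^\lambda c_t + g(\tilde y) + \tfrac{1}{n\tilde y}}{x^* - \tilde y}\,,
\]
where $g$ denotes the gradient of $f_t$. The right-hand side is $O(1)$: the complexity value bound $\thet$ combined with the two-sided Hessian estimates of \Cref{thm:hessian_ineq_lb,thm:hessian_ineq_ub} yields the componentwise control $|g_i(\tilde y)\,\tilde y_i| = O(1)$; the upper bound already proved forces $x^*_i/\tilde y_i = O(1)$; and the log-inequality combined with $\inpT{\ctrop}{\cptrop(\lambda)} \leq -\lambda$ controls the cost contributions $t^\lambda \inp{c_t}{\tilde y}$ and $t^\lambda \inp{c_t}{x^*}$. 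Hence $\sum_i \log(x^*_i/\tilde y_i) \geq -C$ with $C = C(n, \thet, \delta^*, \Ktrop)$. Combining this aggregate bound with the coordinate-wise upper bound $\log(x^*_i/\tilde y_i) = O(1)$ for each individual $i$ then forces every summand to satisfy $\log(x^*_i/\tilde y_i) \geq -C'$, whence $\log_t x^*_i \geq \log_t \tilde y_i - O(1/\log t) \geq \cptrop(\lambda)_i - O(1/\log t)$.

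The main obstacle I anticipate is the construction of the interior competitor $\tilde y$ with the prescribed tropical coordinates: this genuinely leverages both the regularity and the tropical convexity of $\Ktrop$, and ensuring that $\tilde y$ is simultaneously ``deep enough'' in $\Kt$ to apply the upper Hessian estimate \Cref{thm:hessian_ineq_ub} at $\tilde y$ requires care in how the perturbation is performed. A secondary technical point is that the final constant $\Gamma$ must be uniform in $\lambda \in \R$, which demands careful bookkeeping of the dependencies in both the small-$\lambda$ regime (near the analytic center) and the large-$\lambda$ regime (near the optimum).
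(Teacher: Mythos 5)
Your overall strategy is the same as the paper's: the componentwise upper bound obtained from $\inp{c_t}{\cpt(\eta)}\leq \thet/\eta$, \eqref{eq:log_ineq}, the barycenter property and the Lipschitz bound of \Cref{lemma:cptrop_monoticity} is exactly the paper's inequality~\eqref{eq:unif_conv_cp_4}; the lower bound via a competitor point near the tropical barycenter, the optimality of $\cpt$ combined with \Cref{cor:log_bound}, the control of the gradient term through the complexity value $\thet$ and the Hessian upper bound of \Cref{thm:hessian_ineq_ub}, and the final combination of an aggregate sum bound with the coordinatewise upper bound reproduce~\eqref{eq:unif_conv_cp_3} and \Cref{lemma:component_ineq}. (Controlling the ratios $x^*_i/\tilde y_i$ by the already-proved upper bound instead of by the domination property built into the paper's competitor is a harmless variant.)

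The genuine gap is the construction of $\tilde y$, i.e.\ the content of \Cref{lemma:good_lift}, which you flag as the main obstacle but do not resolve, and the mechanism you sketch does not work. First, shifting $\cptrop(\lambda)$ along $-\onevector$ need not enter $\interior \Ktrop$: tropical polyhedra have facets of the form $x_i - x_j \leq a$, which are invariant under uniform shifts, and the tropical central path of~\eqref{eq:tcex} does lie on such facets (this is precisely why the regularity proof, \Cref{lemma:regular}, perturbs by the non-uniform vector $(2^{n-1},2^{n-2},\dots,1)$ rather than by $\onevector$). Second, regularity of $\Ktrop$ gives interior points nearby but with no quantitative trade-off between their depth and their distance to $\cptrop(\lambda)$; to get a bound $\Gamma/\log t$ uniform in $\lambda$ one needs a point at depth $\eps\asymp 1/\log t$ within distance $C_{\Ktrop}\,\eps$ of the barycenter, which is \Cref{lemma:perturbation_lemma}, proved by LP duality from the semilinearity hypothesis, item~(iii) of \Cref{assmpt:assmpt_K_c} --- an assumption your plan never invokes, although it is exactly where the dependence of $\Gamma$ on $\Ktrop$ enters. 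Third, Hausdorff convergence of $\log_t\Kt$ only furnishes some point of $\Kt$ whose $\log_t$-image is close to the chosen interior point; it does not certify that $\tilde y + (K+1)\tilde y_i e_i$ and $\tilde y - \tfrac12 \tilde y_i e_i$ belong to $\Kt$, which is what \Cref{thm:hessian_ineq_ub} requires at $\tilde y$ (and what your bound $|g_i(\tilde y)\tilde y_i|=O(1)$ rests on). The paper closes this with \Cref{prop:non_archimedean_pointwise}, a determinant-domination lifting argument showing that a tropical ball of radius about $\tfrac12\log_t n! + n\delta_\K(t) + r$ around $\log_t \tilde y$ inside $\Ktrop$ forces every point within multiplicative factor $t^{\pm r}$ of $\tilde y$ to lie in $\Kt$. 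Without these two quantitative ingredients, the lower-bound half of your argument does not go through.
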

More precisely, the constant $\Gamma$ can be obtained from
any representation of $\Ktrop$ as a finite union of polyhedra,
it has an explicit dependence in the collection of normals
to the facets of these polyhedra.

The idea of the proof is the following. Let $\lambda \in \R$. We suppose, by contradiction, that $\log_t \cpt(t^\lambda)$ is far from $\cpt(\lambda)$, and we build a point $x \in \K_t$ which has a smaller value w.r.t.~the objective function of~\eqref{eq:param_interior_point_program}. To do this, we use the log-like inequalities established in \cref{sec:log_like_barrier}. In particular, in order to apply \Cref{thm:hessian_ineq_ub}, we need to choose $x$ that is sufficiently far away the boundary of $\K_t$. But we also need $\log_t x$ to be close to the point $\cptrop(\lambda)$ of the central path. One difficulty is that, as a tropical barycenter, $\cptrop(\lambda)$ can be in the boundary of the log-limit $\Ktrop$. The careful construction of this point is the purpose of the following lemma, where $\alpha$ is a constant that only depends of $\thet$, $n$, $\delta^*$ and the constant $C_{\Ktrop}$ of \Cref{lemma:perturbation_lemma} in Appendix~\ref{app:proofs} (that only depends on $\Ktrop$). The proof of the following lemma also lies in Appendix~\ref{app:proofs}.

\begin{lemma}\label{lemma:good_lift}
There exists $t_1 > 1$ such that for all $t \geq t_1$ and $\eta > 0$, there is a point $x \in \Kt$ satisfying the following conditions:
\begin{enumerate}
\item $\inp{c_t}{x} \leq \frac{1}{\eta}$,
\item if $y \in \Kt$ is such that $\inp{c_t}{y} \leq \frac{1}{\eta}$, then $y \leq \alpha x$,
\item $x + (K+1) x_i e_i \in \Kt$ and $x - \frac{1}{2} x_i e_i \in \Kt$.
\end{enumerate}
\end{lemma}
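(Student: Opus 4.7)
The plan is to use the tropical central path as a blueprint: the point $x$ should satisfy $\log_t x \approx \cptrop(\lambda)$ where $\lambda \coloneqq \log_t \eta$. The naive candidate $x = t^{\cptrop(\lambda)}$ does not work directly, because $\cptrop(\lambda)$, being defined as a tropical supremum, typically lies on the boundary of $\Ktrop$, so that $x$ would have no ``room'' to accommodate the perturbations required by condition~(3). My approach is to slightly pull this point into the interior of $\Ktrop$ using the perturbation lemma (\Cref{lemma:perturbation_lemma} in Appendix~\ref{app:proofs}), then lift to $\Kt$ through the Hausdorff approximation $\distInf(\log_t \Kt, \Ktrop) \leq \delta^*/\log t$.

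\textbf{Construction.}
Fix $\lambda = \log_t \eta$. Since $\Ktrop$ is regular and semilinear, the perturbation lemma provides, for a shift $\sigma \coloneqq C/\log t$ with $C$ large enough (depending on $\thet, n, \delta^*, C_\Ktrop$), a point $u^\star \in \Ktrop$ such that (i) $u^\star \geq \cptrop(\lambda+\gamma) - \sigma\onevector$ for some $\gamma = O(\sigma)$, ensuring $\inpT{\ctrop}{u^\star} \leq -\lambda - \gamma/2$, and (ii) an $\ell_\infty$-ball of radius $\rho = \Theta(\sigma)$ around $u^\star$ lies in $\interior \Ktrop$. Using $\distInf(\log_t \Kt, \Ktrop) = O(1/\log t)$ together with the depth $\rho$ (choosing $C$ large enough that $\rho$ exceeds $\delta_\K(t)$ plus the log-scale size of the perturbations in condition~(3)), I set $x \coloneqq t^v$ where $v \in \log_t \Kt$ is any point with $\|v - u^\star\|_\infty \leq \delta_\K(t)$.

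\textbf{Verification of the three conditions.}
Condition~(1) follows from~\eqref{eq:log_ineq}:
\begin{equation*}
\log_t \inp{c_t}{x} \leq \inpT{\log_t c_t}{v} + \log_t n \leq \inpT{\ctrop}{u^\star} + O(1/\log t) \leq -\lambda
\end{equation*}
for $t$ large, giving $\inp{c_t}{x} \leq 1/\eta$. For condition~(3), the points $x + (K+1)x_i e_i$ and $x - \tfrac{1}{2} x_i e_i$ correspond in log-scale to $v + \log_t(K{+}2)\,e_i$ and $v - \log_t 2\,e_i$ respectively; both lie within $\ell_\infty$-distance $O(1/\log t)$ of $u^\star$. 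By construction of $\rho$, they lie in the ball in $\interior \Ktrop$, and then the Hausdorff-distance bound combined with the $\rho$-depth guarantees they lie in $\log_t \Kt$ (this is the content of the perturbation lemma). For condition~(2), consider $y \in \Kt$ with $\inp{c_t}{y} \leq 1/\eta$. Then $\log_t y$ lies within $\delta_\K(t)$ of $\Ktrop$, and~\eqref{eq:log_ineq} together with $\distInf(\log_t c_t,\ctrop) = O(1/\log t)$ yields $\inpT{\ctrop}{\log_t y} \leq -\lambda + O(1/\log t)$. Hence $\log_t y$ is close to $\Ktrop_{\lambda'}$ with $\lambda' = \lambda - O(1/\log t)$. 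Since any element of this set is dominated by its tropical barycenter $\cptrop(\lambda')$, and since \Cref{lemma:cptrop_monoticity} gives $\cptrop(\lambda') \leq \cptrop(\lambda) + O(1/\log t)\,\onevector$, we obtain $\log_t y \leq \log_t x + O(1/\log t)$, so $y \leq \alpha x$ for $\alpha$ independent of $t$ and $\eta$.

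\textbf{Main obstacle.}
The technical heart is the perturbation lemma: it must turn a boundary point $\bar u \in \Ktrop$ into a point $u^\star$ whose interior depth $\rho$ (in $\Ktrop$) is quantitatively controlled, and whose lift $v$ to $\log_t \Kt$ preserves that depth up to the Hausdorff error. This requires exploiting the semilinear structure of $\Ktrop$ (\ref{assmpt:assmpt_K_c}\ref{item:iii}) and the regularity~\ref{assmpt:assmpt_K_c}\ref{item:ii}: the constant $C_\Ktrop$ encodes the geometry of the normals of the finitely many polyhedra making up $\Ktrop$, and it is precisely this constant that propagates into the dominance factor $\alpha$ of condition~(2).
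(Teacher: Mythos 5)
Your overall plan coincides with the paper's: erode the tropical sublevel set to get a point close to its tropical barycenter but with a quantified $\ell_\infty$-depth (via \Cref{lemma:perturbation_lemma}), lift it to $\Kt$ through the Hausdorff bound of \Cref{assmpt:assmpt_K_c}, and check the three conditions essentially as you do for (1) and (2). However, there is a genuine gap at the decisive step for condition~(3). You claim that, because $\log_t(x+(K{+}1)x_ie_i)$ and $\log_t(x-\tfrac12 x_ie_i)$ lie in a ball of radius $\rho$ contained in $\interior\Ktrop$ with $\rho$ exceeding the Hausdorff error, ``the Hausdorff-distance bound combined with the $\rho$-depth guarantees they lie in $\log_t\Kt$ (this is the content of the perturbation lemma).'' It is not: \Cref{lemma:perturbation_lemma} is a purely tropical statement comparing $\Ktrop$ with its own $\eps$-erosion, and says nothing about membership in $\Kt$. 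More importantly, the inference itself is unjustified: $\distInf(\log_t\Kt,\Ktrop)\leq\delta_\K(t)$ only tells you that every point of $\Ktrop$ has \emph{some} point of $\log_t\Kt$ within distance $\delta_\K(t)$; it does not give exact membership of a prescribed point whose log-image is deep inside $\Ktrop$, which is what condition~(3) requires (the specific points $x+(K{+}1)x_ie_i$ and $x-\tfrac12 x_ie_i$ must belong to $\Kt$, not merely be close to it in log-scale).

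This is exactly where the real work of the paper's proof lies: it invokes \Cref{prop:non_archimedean_pointwise}, which states that if $x\in\Kt$ and $B_\infty(\log_t x;\tfrac12\log_t n!+n\delta_\K(t)+r)\subset\Ktrop$, then every $y$ with $\distInf(\log_t x,\log_t y)\leq r$ lies in $\Kt$. Its proof is not a triangle-inequality argument; it lifts the $n{+}1$ tropically perturbed points $u+\eps e_i$ (in homogeneous coordinates) to points of $\Kt$ and shows, by a determinant-domination estimate (whence the $\tfrac12\log_t n!$ margin), that $y$ lies in the convex cone they generate, using the convexity of $\Kt$ itself. Without this proposition (or an equivalent argument exploiting convexity of $\Kt$), your verification of condition~(3) does not go through; with it, your construction would need the depth $\rho$ to absorb the extra $\tfrac12\log_t n!+n\delta_\K(t)$ margin in addition to $\log_t(K{+}2)$, which is how the paper's $\eps(t)$ is calibrated.
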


\begin{proof}[Proof ({\cref{thm:unif_conv_cp}})]
We fix $t \geq t_1$ and $\eta > 0$, and we consider the point $x$ provided by \Cref{lemma:good_lift}. We define $f_{\eta,t}(z) \coloneqq \eta \inp{c_t}{z} + f_t(z)$, and we denote by $g_t$ the gradient of $f_t$. By \Cref{cor:log_bound}, we have
\begin{align*}
f_{\eta,t}(\cpt(\eta)) - f_{\eta,t}(x) &\geq \eta \big( \inp{c_t}{\cpt(\eta)} - \inp{c_t}{x} \big) + \inp{g_t(x)}{\cpt(\eta)-x} + \frac{1}{n} \inp[\Big]{\frac{1}{x}}{\cpt(\eta)-x} \\
& \qquad + \frac{1}{n} \sum_i \big( \log x_i - \log {\cpt(\eta)}_i \big) \, .
\end{align*}
Observe that $\inp{c_t}{\cpt(\eta)} \geq 0$ (since $c_t \in \R_{\geq 0}^n$ and $\K_t \subset \R_{\geq 0}^n$), and $\eta \inp{c_t}{x} \leq 1$ by hypothesis on $x$. We also have $\inp{\frac{1}{x}}{\cpt(\eta)} \geq 0$ as both vectors have positive components, and $\inp{\frac{1}{x}}{x} = n$. Furthermore, by Cauchy-Schwarz inequality (w.r.t.~$\inp{\cdot}{\cdot}_x$), we have:
\[
\abs{\inp{g_t(x)}{\cpt(\eta) - x}}
\leq \norm{H_t(x)^{-1} g_t(x)}_x \norm{\cpt(\eta)-x}_x \leq \sqrt{\thet} \norm{\cpt(\eta)-x}_x \, ,
\]
where the last inequality follows from the fact that $f_t$ is $\vartheta$-self-concordant. Thanks to \Cref{lemma:good_lift} and \Cref{thm:hessian_ineq_ub}, we know that the Hessian $H_t(x)$ of $f_t$ at $x$ satisfies $H_t(x) \preccurlyeq 4nK^2 \Diag(\frac{1}{x^2})$. Besides, by \cref{lemma:cp_val_ineq} (Appendix~\ref{app:proofs}), we have $\frac{1}{\thet} \inp{c_t}{\cpt(\eta)} \leqslant \frac{1}{\eta}$. Therefore, $\cpt(\eta) \leqslant (\thet \alpha) x$ by hypothesis on $x$, and denoting
\begin{align}
  \beta \coloneqq \max(1,{(1-\thet \alpha)}^2)\enspace,\label{eq:def-beta}
  \end{align}
  we get
\[
\norm{\cpt(\eta)-x}^2_x \leqslant 4nK^2\sum_{i=1}^n {\bigg( 1 - \frac{{\cpt(\eta)}_i}{x_i} \bigg)}^2 \leqslant 4n^2K^2 \beta  \, .
\]
Altogether, we get
\begin{equation}
\label{eq:unif_conv_cp_1}
f_{\eta,t}(\cpt(\eta)) - f_{\eta,t}(x) \geq
-2-2\sqrt{\thet \beta} nK
+ \frac{1}{n} \sum_i \big( \log x_i - \log {\cpt(\eta)}_i \big) \, .
\end{equation}
Let $\eta_t:=t^\lambda$.
We claim that there are constants $M>0$ and $t_0>t_1$ such that 
\begin{equation}
\label{eq:unif_conv_cp_3}
\sum_i \log_t {\cpt(\eta_t)}_i \geq \sum_i {\cptrop(\lambda)}_i - \frac{M}{\log t} \, ,
\end{equation}
for all $\lambda \in \R$ and $t\geq t_0$.
Indeed, suppose that the inequality in~\eqref{eq:unif_conv_cp_3}
does not hold for some $t$.
Then, \eqref{eq:unif_conv_cp_1} yields
\begin{equation}
\label{eq:unif_conv_cp_2}
f_{\eta_t,t}(\cpt(\eta_t)) - f_{\eta_t,t}(x) > -2-2\sqrt{\thet \beta} nK + \frac{1}{n} \sum_i \big( \log_t x_i - {\cptrop(\lambda)}_i \big) \log t + M \, .
\end{equation}
Consider $z \in \Kt$ such that $\distInf(\log_t z; \cptrop(\lambda)) \leqslant \delta_\K(t)$. Then
\begin{align*}
\log_t \inp{c_t}{z} &\leq \log_t n + \delta_\K(t) + \delta_c(t) + \inpT{c}{\cptrop(\lambda)} && \text{using the relation~(\ref{eq:log_ineq}), page~\pageref{eq:log_ineq}}\\
&\leq \log_t n + \delta_\K(t) + \delta_c(t) - \lambda  \, ,
\end{align*}
which implies that $\inp{c_t}{z} \leq n \exp(2\delta^*)/\eta_t$.
We set $\gamma \coloneqq \max(1,n \exp(2\delta^*))$. Since $0 \in \Kt$, the point $ z' \coloneqq \frac{1}{\gamma} z = (1 - \frac{1}{\gamma}) 0 + \frac{1}{\gamma} z$ belongs to $\Kt$. Therefore, by hypothesis on $x$, we have $z' \leq \alpha x$, and subsequently, $z \leq \alpha \gamma x$. Using the fact that $\cptrop(\lambda) \leq \delta_\K(t) + \log_t z$ (by definition of $z$), we obtain
\[
\cptrop(\lambda) \leq  \delta_\K(t) + \log_t (\alpha \gamma) + \log_t x
\, .
\]
Using this inequality in~\eqref{eq:unif_conv_cp_2} yields
\[
f_{\eta_t,t}(\cpt(\eta_t)) - f_{\eta_t,t}(x) > -2-2\sqrt{\thet \beta} nK  - (\delta^* + \log_t (\alpha \gamma)) + M \, .
\]
Taking $M:=\delta^* + 3 + 2\sqrt{\thet \beta} nK$, and assuming that $t> \alpha\gamma$,
  we deduce that $f_{\eta_t,t}(\cpt(\eta_t)) - f_{\eta_t,t}(x) > 0$ holds,
  which contradicts the fact that $\cpt(\eta_t)$ minimizes $f_{\eta_t,t}(\cdot)$. This entails that~\eqref{eq:unif_conv_cp_3} holds for this choice
  of $M$, and for all $t$ large enough.

Furthermore, $\inp{c_t}{\cpt(\eta_t)} \leq \frac{\thet}{\eta_t}$ (\cref{lemma:cp_val_ineq}), and by \Cref{assmpt:assmpt_K_c}, there is $u \in \Ktrop$ such that $\distInf(u, \log_t \cpt(\eta_t)) \leq \delta_\K(t)$. Then
\begin{align*}
\inpT{c}{u} & \leq \delta_\K(t) + \delta_c(t) + \log_t \inp{c_t}{\cpt(\eta)} && \text{using}~\eqref{eq:log_ineq} \\
& \leq \delta_\K(t) + \delta_c(t) + \log_t \thet - \lambda \, .
\end{align*}
By definition of the tropical barycenter, we deduce that 
\[
u \leq \cptrop(\lambda - (\delta_\K(t) + \delta_c(t) + \log_t \thet)) \leq \cptrop(\lambda) + (\delta_\K(t) + \delta_c(t) + \log_t \thet)
\]
where the last inequality follows from the 1-Lipschitz
continuity
of the tropical central path $\lambda \mapsto \cptrop(\lambda)$ with respect
to the sup-norm, see~\cref{lemma:cptrop_monoticity}. As $\log_t \cpt(\eta_t) \leq u + \delta_\K(t)$, we get:
\begin{equation}
\label{eq:unif_conv_cp_4}
\log_t \cpt(\eta_t) \leq \cptrop(\lambda) + 2\delta_\K(t) + \delta_c(t) + \log_t \thet \, .
\end{equation}
Applying \cref{lemma:component_ineq} (\Cref{app:proofs}) to \eqref{eq:unif_conv_cp_3} and \eqref{eq:unif_conv_cp_4} yields that:
\begin{align*}
  \distInf(\cpt(\eta_t), \cptrop(\lambda )) & \leq n
  (2\delta_\K(t) + \delta_c(t) + \log_t \thet) + \frac{M}{\log t} \\
  & \leq \Big((3n +1) \delta^* + \log \thet +
   3 + 2\sqrt{\thet \beta} nK \Big) / \log t \, , \label{eq:final_bound} 
\end{align*}
for all $t\geq t_0$ and for all $\lambda\in \R$.
The bound is of the form $\Gamma/\log t$, in which
the constant $\Gamma$ depends only of $n$, $\delta^*$
and $\vartheta$ (recalling that $K=4\vartheta +1$),
and, through $\beta$ and $\alpha$ (see~\eqref{eq:def-beta}),
on the constant $C_{\Ktrop}$ associated
to the semilinear set $\Ktrop$.
\end{proof}

\section{A tropical lower bound on the iteration complexity of interior point methods}\label{sec:complexity}

Interior point methods follow the central path for increasing values of $\eta$ up to finding an approximate solution to the convex program. In doing so, they build a polygonal curve in a certain neighborhood of the central path. In \Cref{subsec:neighborhoods}, we discuss the neighborhoods used in the literature of IPMs, and show that they all fit in a family of multiplicative neighborhoods of the central path. In \Cref{subsec:lower_bound}, we prove that the log-limit of the latter uniformly collapse to the tropical central path. Finally, we show how we deduce from this property a general tropical lower bound on the number of iterations of IPMs in the case of linear programming.

\subsection{Multiplicative neighborhoods of the central path}\label{subsec:neighborhoods}

In this section, we consider a general (\ie, non-parametric) formulation of the barrier problem:
\begin{equation}\label{eq:barrier_pb}
\Minimize \enspace \eta \inp{c}{x} + f(x) \enspace \SubjectTo \enspace x \in D_f
\end{equation}
where $f$ is a $\thet$-self-concordant barrier with domain $D_f \subset \R_{> 0}^n$, and $\eta > 0$. This problem induces a central path that we denote by $\cp$. 

Given $0 < \multlow < 1 < \multupp$, we define the \emph{multiplicative neighborhood (of parameters $\multlow, \multupp$)} of the point of the central path $\cp(\eta)$ as:
\[
\Mcal_{\multlow,\multupp}(\eta) \coloneqq \big\{ x \in D_f \colon \multlow \cp(\eta) \leq x \leq \multupp \cp(\eta) \big\} \, ,
\]

While there is an abundant literature on the algorithmics of IPMs in the special case of the logarithmic barrier (see~\cite{Wright} for an account), the only references in the case of general barriers we are aware of is~\cite{NesterovN94,Renegar01}. The main reason is that computing with barriers other than the logarithmic one usually requires involved iteration schemes, see for instance~\cite{LeeSidford19} for the case of Lewis weights barrier, and~\cite{DeKlerk} for that of the entropic barrier.

Interestingly, Renegar develops in~\cite{Renegar01} several iteration schemes, namely short-step, long-step and predictor-corrector methods. As far as we know, this constitutes the state of the art in the case of general barriers. We propose to rely on his description of these methods in order to discuss the kind of neighborhoods used. 

To this purpose, given $M > 0$, we introduce the following neighborhood of the point $\cp(\eta)$ of the central path ($\eta > 0)$:
\[
\mathcal{N}_M(\eta) \coloneqq \bigg\{ x \in D_f \colon f_\eta(x) - f_\eta(\cp(\eta)) \leq M \bigg\} \, .
\]
where $f_\eta(x) \coloneqq \eta \inp{c}{x} + f(x)$. We claim that this family of neighborhoods encompasses the different neighborhoods of the central path used in the various methods presented in~\cite{Renegar01}. 

Short- and long-step interior point methods consider a neighborhood $\Ncal^{\mathrm{step}}_\rho(\eta)$ of the point $\cp(\eta)$ of the form
\[
\Ncal^{\mathrm{step}}_\rho(\eta) \coloneqq \{ x \in D_f \colon \norm{n_\eta(x)}_x \leq \rho \} \, ,
\]
for some $\rho > 0$, and where $n_\eta(x) \coloneqq -H(x)^{-1} (\eta c + g(x))$ corresponds to the Newton direction used to iterate from the point $z$. The parameter $\rho$ satisfies $\rho \leq 1/4$. The following result is a consequence of~\cite[Theorem~2.2.5]{Renegar01}.
\begin{lemma}
Let $\rho \leq 1/4$. Then for all $\eta > 0$, $\Ncal^{\mathrm{step}}_\rho(\eta) \subset \Ncal_{3/16}(\eta)$.
\end{lemma}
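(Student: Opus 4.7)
The plan is to derive the required bound on the objective gap $f_\eta(x) - f_\eta(\cp(\eta))$ directly from the self-concordance of $f_\eta := f + \eta \inp{c}{\cdot}$, which shares its Hessian with $f$ and is therefore itself a $\thet$-self-concordant barrier. The starting point is the standard lower bound for self-concordant functions (see~\cite[\S~2.2]{Renegar01}): for any $x, y \in D_f$,
\[
f_\eta(y) \geq f_\eta(x) + \inp{\nabla f_\eta(x)}{y-x} + \omega(\norm{y-x}_x), \qquad \omega(t) := t - \log(1+t).
\]
Applied at $y = \cp(\eta)$, this inequality converts the goal into an upper bound on $f_\eta(x) - f_\eta(\cp(\eta))$ in terms of $r := \norm{\cp(\eta)-x}_x$ and the linear term at $x$.

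To control the linear term, I will exploit the identity $\nabla f_\eta(x) = -H(x) n_\eta(x)$, which implies $\inp{\nabla f_\eta(x)}{y-x} = -\inp{n_\eta(x)}{y-x}_x$. Cauchy--Schwarz in the local inner product $\inp{\cdot}{\cdot}_x$ then gives
\[
\abs{\inp{\nabla f_\eta(x)}{\cp(\eta)-x}} \leq \norm{n_\eta(x)}_x \cdot \norm{\cp(\eta)-x}_x = \lambda r,
\]
where $\lambda := \norm{n_\eta(x)}_x$. Combining this with the self-concordance lower bound yields
\[
f_\eta(x) - f_\eta(\cp(\eta)) \leq \lambda r - \omega(r).
\]
A direct computation shows that the Fenchel conjugate of $\omega$ is $\omega_*(\lambda) = -\lambda - \log(1-\lambda)$, attained at $r = \lambda/(1-\lambda)$, so the right-hand side is at most $\omega_*(\lambda)$.

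Since $\omega_*$ is increasing on $[0,1)$ and $\lambda \leq \rho \leq 1/4$, we conclude
\[
f_\eta(x) - f_\eta(\cp(\eta)) \leq \omega_*(1/4) = -\tfrac{1}{4} + \log\tfrac{4}{3},
\]
and the elementary estimate $\log(1+s) \leq s$ applied with $s = 1/3$ gives $\omega_*(1/4) \leq \tfrac{1}{12} \leq \tfrac{3}{16}$. There is no real obstacle here: the argument is a routine application of self-concordance and the only substantive ingredient beyond the textbook inequality is the identification of the Fenchel conjugate $\omega_*$, which in turn explains why the threshold $\rho \leq 1/4$ (with the slightly loose target constant $3/16$) is convenient.
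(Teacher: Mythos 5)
Your proof is correct, and it takes a genuinely different route from the paper's. The paper first applies Renegar's Theorem~2.2.5 to $f_\eta$ to conclude $\norm{\cp(\eta)-x}_x \leq 3/4$ whenever $\norm{n_\eta(x)}_x \leq 1/4$, and then uses only \emph{first-order} convexity together with Cauchy--Schwarz in $\inp{\cdot}{\cdot}_x$, which gives exactly $(1/4)\cdot(3/4) = 3/16$ (this is where the constant in the lemma comes from). You instead retain the second-order term $\omega(\norm{\cp(\eta)-x}_x)$ from the self-concordance lower bound and eliminate the unknown distance by passing to the Fenchel conjugate $\omega_*$, so you never need any bound on $\norm{\cp(\eta)-x}_x$, and you obtain the sharper constant $\omega_*(1/4) = -\tfrac14 + \log\tfrac43 \leq \tfrac1{12}$; the conjugate computation and the final estimate are correct. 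Two small blemishes, neither fatal. First, $f_\eta$ has the same Hessian as $f$ and is therefore self-concordant, but it is \emph{not} a $\thet$-self-concordant barrier (its Newton decrement is exactly $\norm{n_\eta(x)}_x$, which is unbounded far from the path); since you only invoke self-concordance, this is harmless. Second, the lower bound $f_\eta(y) \geq f_\eta(x) + \inp{\nabla f_\eta(x)}{y-x} + \omega(\norm{y-x}_x)$ with $\omega(t) = t - \log(1+t)$ is Nesterov's form and is not literally stated in Renegar's \S 2.2; in the paper's $\Ccal^2$, Renegar-style setting one would instead integrate the Hessian bound $H(z) \succcurlyeq (1-\norm{z-x}_x)^2 H(x)$ along the segment, which yields a slightly weaker minorant (for instance $r^2/2 - r^3/3 + r^4/12$ for $r \leq 1$, and linear growth beyond), but the same conjugate/optimization-in-$r$ step then still gives a value well below $3/16$ when $\norm{n_\eta(x)}_x \leq 1/4$, so your argument survives once the citation is adjusted.
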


\begin{proof}
Let $x \in \Ncal^{\mathrm{step}}_\rho(\eta)$. We apply~\cite[Theorem~2.2.5]{Renegar01} to the map $f_\eta$, which yields $\norm{\cp(\eta) - x}_x  \leq \frac{3}{4}$. By convexity of $f$, we have
\[
f_\eta(x) - f_\eta(\cp(\eta)) \leq -\inp{\eta c + g(x)}{\cp(\eta) - x} = \inp{n_\eta(x)}{\cp(\eta) - x}_x \leq \norm{n_\eta(x)}_x \norm{\cp(\eta) - x}_x \leq \frac{3}{16} \, . \qedhere
\]
\end{proof}

The predictor-corrector method alternates between a predictor step and a corrector step. The former starts from a point $x = x^1$ such that $\norm{n_{\vert L(x)}(x)}_{x} \leq \frac{1}{13}$, where $L(x)$ is the affine subspace of the points $y$ such that $\inp{c}{y} = \inp{c}{x}$, and $n_{\vert L(x)}(x)$ is the orthogonal projection of the Newton step $-H(x)^{-1} g(x)$ of $f$ at $x$ onto $L(x)$. It then follows a halfline $x - s c_x$ ($s \geq 0$), where $c_x \coloneqq H(x)^{-1} c$. The vector $-c_x$ corresponds to the so-called ``affine scaling'' direction used in the case of the logarithmic barrier. If $\bar s$ is the supremum of the $s \geq 0$ such that $x - s c_x \in D_f$, the predictor step stops at the point $x' \coloneqq x - \sigma \bar s c_x$, where $\sigma < 1$ is a prescribed constant. In the proof of the polynomial time convergence of the algorithm~\cite[p.~54]{Renegar01}, Renegar shows that 
\[
f(x') - f(\cp(\eta_2)) \leq \thet \log \frac{1}{1 - \sigma} + \frac{1}{154} =: \Mpc \, ,
\]
where $\eta_2 > 0$ satisfies such that $\inp{c}{\cp(\eta_2)} = \inp{c}{x'}$. The algorithm then performs a corrector step in the subspace $L(\cp(\eta_2))$ (using a number of exact line searches) to get back to a point $x^2$ such that $\inp{c}{x^2} = \inp{c}{\cp(\eta_2)}$ such that $\norm{n_{\vert L(x^2)}(x^2)}_{x^2} \leq \frac{1}{13}$. The proof of Renegar actually shows that all points $z$ in the  trajectory followed by the predictor-corrector method (\ie, the sequence of segments between the successive iterate satisfies $f(z) - f(\cp(\eta)) \leq \Mpc$, where $\eta > 0$ is chosen such as $\inp{c}{\cp(\eta)} = \inp{c}{z}$. We obviously have $z \in \Ncal_\Mpc(\eta)$.

We conclude this section by showing that the multiplicative neighborhoods introduced above indeed captures the neighborhoods of the form $\Ncal_M(\cdot)$. The proof relies again on the log-like properties of the barrier $f$:
\begin{proposition}
\label{prop:mult_neigh}
Let $M > 0$. There exist $\multlow, \multupp > 0$ depending only on $n$ and $M$ such that for all $\eta > 0$, $\Ncal_M(\eta) \subset \Mcal_{\multlow, \multupp}(\eta)$. 
\end{proposition}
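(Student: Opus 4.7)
The plan is to compare the point $x\in\Ncal_M(\eta)$ with $\cp(\eta)$ by applying the log-like lower bound of~\Cref{cor:log_bound} with the center of expansion chosen to be $\cp(\eta)$. Since $\cp(\eta)$ minimizes $f_\eta$ over $D_f$, the first-order optimality condition gives $g(\cp(\eta))+\eta c=0$, and the linear-in-$c$ part of the ensuing inequality will cancel the cost term in $f_\eta(x)-f_\eta(\cp(\eta))$.

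Carrying this out, setting $r_i := x_i/\cp(\eta)_i$ for each $i\in[n]$, the bound from~\Cref{cor:log_bound} (applied at $\cp(\eta)$) combined with the optimality condition yields
\[
f_\eta(x)-f_\eta(\cp(\eta))\;\geq\;\frac{1}{n}\sum_{i=1}^n\bigl(r_i-1-\log r_i\bigr)\;=\;\frac{1}{n}\sum_{i=1}^n\phi(r_i),
\]
where $\phi(t):=t-1-\log t$ is the well-known Kullback--Leibler type convex function on $(0,+\infty)$. Since $\phi\geq 0$, every summand is nonnegative, and the hypothesis $x\in\Ncal_M(\eta)$ gives $\phi(r_i)\leq nM$ for every $i$.

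Now $\phi$ is strictly convex, vanishes only at $t=1$, and satisfies $\phi(t)\to+\infty$ both as $t\to 0^+$ and $t\to+\infty$. Therefore the sublevel set $\{t>0:\phi(t)\leq nM\}$ is a bounded closed interval $[\multlow,\multupp]$ with $0<\multlow<1<\multupp<+\infty$, and these constants depend only on $n$ and $M$. This gives $\multlow\cp(\eta)_i\leq x_i\leq \multupp\cp(\eta)_i$ componentwise, i.e.\ $x\in\Mcal_{\multlow,\multupp}(\eta)$, which is the desired inclusion.

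There is no real obstacle here: everything reduces to the clean cancellation of the cost term via first-order optimality at $\cp(\eta)$, and to the elementary properties of $\phi$. The only point to double-check is that~\Cref{cor:log_bound} is available, which is the case since we assume $D_f\subset\R_{>0}^n$ in~\eqref{eq:barrier_pb}. Notice also that the resulting $\multlow,\multupp$ are independent of $\eta$ and of the particular barrier $f$ (only $n$ and $M$ enter), which is exactly what the proposition claims.
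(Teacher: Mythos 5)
Your proposal is correct and follows essentially the same argument as the paper: expand $f_\eta$ around $\cp(\eta)$ via \Cref{cor:log_bound}, cancel the cost term using $g(\cp(\eta))=-\eta c$, reduce to $\frac{1}{n}\sum_i\phi(x_i/\cp(\eta)_i)\leq M$ with $\phi(z)=z-1-\log z$, and conclude from the boundedness of the sublevel set $\{\phi\leq nM\}$. No gaps.
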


\begin{proof}
Consider $\eta>0$ and $x \in \mathcal{N}_M(\eta)$, meaning that $f_\eta(x) \leq M + f_\eta(\cp(\eta))$. By \cref{cor:log_bound}, we have
\begin{align}
M \geq f_\eta(x) - f_\eta(\cp(\eta)) & \geq \eta \inp{c}{x - \cp(\eta)} +  \inp{g(\cp(\eta))}{x-\cp(\eta)} + \inp{\frac{1}{n\cp(\eta)}}{x} - \inp{\frac{1}{n\cp(\eta)}}{\cp(\eta)} \nonumber\\ 
&\qquad + \frac{1}{n} \sum_i \big( \log {\cp(\eta)}_i - \log x_i \big) \, .\label{eq:chain-temp}
\end{align}
Since $\cp(\eta)$ is a point of the central path, then $g(\cp(\eta)) = - \eta c$. Furthermore $\inp{\frac{1}{n\cp(\eta)}}{\cp(\eta)} = 1$. Denoting $\phi: z \mapsto z - \log(z)-1$, we rewrite the inequality~\eqref{eq:chain-temp} as
\begin{equation}
\label{eq:mult_neigh_1}
M  \geq \frac{1}{n} \sum_i \phi \big( \frac{x_i}{{\cp(\eta)}_i} \big) \, .
\end{equation}
Since the function $\phi$ is bounded from below by $0$, then for all $i$, $\phi \big( \frac{x_i}{{\cp(\eta)}_i} \big) \leq nM$. Since $\phi$ is convex on $\R_{>0}$ and tends to $\infty$ at the points $0$ and $+\infty$,  $\phi^{-1}([0,nM)])$ is an interval of $\R_{>0}$, hence there are two constants $\multlow \leq \multupp$ depending only on $n$ and $M$ such that for all $i \in [n]$,
\[
\multlow \leq \frac{x_i}{{\cp(\eta)}_i} \leq \multupp \, . \qedhere
\]
\end{proof}

\subsection{The lower bound}
\label{subsec:lower_bound}

One of the benefits of introducing the multiplicative neighborhoods of the central path is to provide an elementary proof that their log-limit is reduced to the tropical central path. To this purpose, we return to the parametric formulation of the barrier problem described in Section~\ref{sec:uniform}, and we add explicitly the dependency in $t$ of the neighborhood, \ie, 
\[
\Mcal_{\multlow,\multupp,t}(\eta) \coloneqq \big\{ x \in \Kt \colon \multlow \cpt(\eta) \leq x \leq \multupp \cpt(\eta) \big\}  \, .
\]
where we recall that $f_{\eta,t}(z) = \eta \inp{c_t}{z} + f_t(z)$. We also define
\[
\Mcal_{\multlow,\multupp,t}([\ubar \eta, \bar \eta]) \coloneqq \bigcup_{\ubar \eta \leq \eta \leq \bar \eta} \Mcal_{\multlow,\multupp,t}(\eta) 
\]
the induced neighborhood of the portion of central path $\cpt([\ubar \eta, \bar \eta])$.

\begin{theorem}
\label{thm:unif_conv_neighborhood}
Let $\multlow, \multupp > 0$. There exists a constant
$\Gamma_{\multlow,\multupp}>0$ and $t_0>0$ such that,
for all $t\geq t_0$ and for all $\lambda\in \R$,
\[
\distInf(x, \cptrop(\lambda) ) \leq \frac{\Gamma_{\multlow,\multupp}}{\log t}
\enspace,\text{ for all }x \in \Mcal_{\multlow,\multupp,t}(t^\lambda) 
\enspace .
\]
\end{theorem}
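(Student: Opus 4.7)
The plan is to combine the multiplicative structure of the neighborhood $\Mcal_{\multlow,\multupp,t}(\cdot)$ with the uniform convergence result of \Cref{thm:unif_conv_cp}, using that $\log_t$ converts multiplicative factors into additive $O(1/\log t)$ perturbations. I do not expect any genuine obstacle: the statement is essentially a triangle inequality after taking logarithms.

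First I would unfold the definition. Fix $\lambda \in \R$, set $\eta = t^\lambda$, and take $x \in \Mcal_{\multlow,\multupp,t}(t^\lambda)$. By definition,
\[
\multlow\, \cpt(t^\lambda) \leq x \leq \multupp\, \cpt(t^\lambda) \, .
\]
Since all entries of $\cpt(t^\lambda)$ are positive (it lies in $\interior \Kt \subset \R_{>0}^n$) and $\multlow,\multupp > 0$, I can apply $\log_t$ entrywise to obtain
\[
\log_t \multlow\, e + \log_t \cpt(t^\lambda) \leq \log_t x \leq \log_t \multupp\, e + \log_t \cpt(t^\lambda) \, ,
\]
so that
\[
\distInf\bigl(\log_t x,\; \log_t \cpt(t^\lambda)\bigr) \leq \frac{\max(|\log \multlow|, |\log \multupp|)}{\log t} \, .
\]

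Second, I invoke \Cref{thm:unif_conv_cp}: for $t$ larger than some threshold $t_0$, and uniformly in $\lambda$,
\[
\distInf\bigl(\log_t \cpt(t^\lambda),\; \cptrop(\lambda)\bigr) \leq \frac{\Gamma}{\log t} \, ,
\]
where $\Gamma$ depends only on $\thet$, $n$, $\delta^*$ and $\Ktrop$. A triangle inequality for $\distInf$ then yields
\[
\distInf\bigl(\log_t x,\; \cptrop(\lambda)\bigr) \leq \frac{\Gamma + \max(|\log \multlow|, |\log \multupp|)}{\log t} \, ,
\]
and I set $\Gamma_{\multlow,\multupp} := \Gamma + \max(|\log \multlow|, |\log \multupp|)$, which depends only on $\multlow,\multupp$ and the constants already controlled by \Cref{thm:unif_conv_cp}. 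This is the claimed bound, valid for all $t \geq t_0$ and all $\lambda \in \R$, uniformly in the choice of $x$ in the neighborhood.

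The only subtlety worth flagging is that the argument implicitly uses that the extension of $\distInf$ to vectors in $\T^n$ (as defined in the footnote of \Cref{sec:uniform}) satisfies the triangle inequality and is compatible with the entrywise order, which is immediate from its definition as an infimum of translations. No further ingredients are needed.
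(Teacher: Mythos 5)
Your proof is correct and follows essentially the same route as the paper: apply $\log_t$ to the defining inequalities of the multiplicative neighborhood to get an additive perturbation of size $\max(|\log\multlow|,|\log\multupp|)/\log t$, then conclude by the triangle inequality with the uniform bound of \Cref{thm:unif_conv_cp}. Your constant $\Gamma+\max(|\log\multlow|,|\log\multupp|)$ is in fact the correct (sign-careful) version of the paper's $\Gamma+\max(\log\multlow,-\log\multupp)$, so nothing further is needed.
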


\begin{proof}
Let $\Gamma$ and $t_0$ be as in \cref{thm:unif_conv_cp}.
Then, for all $t\geq t_0$ and for all $\lambda\in\R$, we have that 
$\distInf(\log_t \cpt(t^\lambda);\cptrop(\lambda)) \leq \Gamma/(\log t)$.
Besides, for all $x \in \Mcal_{\multlow,\multupp,t}(t^\lambda)$, we have $\distInf(\log_t x, \log_t \cpt(t^\lambda)) \leq \max(\log_t \multlow, -\log_t \multupp)$.
 Therefore, for $t\geq t_0$, 
\[
\distInf(x, \cptrop(\lambda)) \leq \frac{\Gamma}{\log t}
+ \max(\log_t \multlow, -\log_t \multupp) = \frac{\Gamma_{\multlow,\multupp}}{\log t}
\enspace,
\]
with $\Gamma_{\multlow,\multupp}:=\Gamma+ \max(\log \multlow, -\log \multupp)$.
\end{proof}

Our aim is now to establish the iteration complexity lower bound in terms of the ``complexity'' of the tropical central path. We establish the lower bound in the case where the convex sets $\Kt$ are polyhedra, \ie, we focus on the complexity of linear programming. Recall that~\Cref{assmpt:assmpt_K_c} is always satisfied in this setting under a genericity condition.

As shown in~\cite{ABGJ18} (Lemma~5 and Prop.~16), in the case of linear programming, the tropical central path is the concatenation of finitely many tropical segments. The number of these segments acts as the complexity measure of the tropical central path. More precisely, given a section $\cptrop([\ubar \lambda, \bar \lambda])$ of the tropical central path, we denote by $\gamma([\ubar \lambda, \bar \lambda])$ the minimal number of tropical segments needed to describe it. We denote by $B_\infty(x;\eps)$ the $d_\infty$-ball of center $x$ and radius $\eps$, and we introduce the following tubular neighborhood or radius $\eps > 0$ of the section $\cptrop([\ubar \lambda, \bar \lambda])$ of the tropical central path:
\[
\Tcal([\ubar \lambda,\bar \lambda];\eps) \coloneqq \bigcup_{\ubar \lambda \leq \lambda \leq \bar \lambda} B_\infty(\cptrop(\lambda);\eps)
\]
where $\eps > 0$. The main result that we use from~\cite{ABGJ18} is the following.
\begin{proposition}[{restatement of \cite[Prop.~28]{ABGJ18}}]\label{prop:tube}
If $\eps > 0$ is small enough, then any concatenated sequence of  tropical segments included in the neighborhood $\Tcal([\ubar \lambda,\bar \lambda];\eps)$ that starts in $B_\infty(\cptrop(\ubar \lambda);\eps)$ and ends in $B_\infty(\cptrop(\bar \lambda);\eps)$ requires at least $\gamma([\ubar \lambda, \bar \lambda])$ tropical segments.
\end{proposition}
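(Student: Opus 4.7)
The plan is to exploit the minimality of the tropical-segment decomposition of $\cptrop([\ubar\lambda, \bar\lambda])$. By definition of $\gamma := \gamma([\ubar\lambda, \bar\lambda])$, there exist parameters $\ubar\lambda = \lambda_0 < \lambda_1 < \cdots < \lambda_\gamma = \bar\lambda$ such that $\cptrop$ restricted to each $[\lambda_{j-1}, \lambda_j]$ is itself a tropical segment, but no coarser such decomposition exists. Write $p_j := \cptrop(\lambda_j)$.

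First, I would extract a positive ``corner gap'' at each interior breakpoint. Minimality of $\gamma$ implies that for every $1 \leq j \leq \gamma - 1$, the point $p_j$ does not lie on $\tsegm(p_{j-1}, p_{j+1})$, for otherwise the two consecutive pieces could be fused into a single tropical segment contradicting the optimality of the decomposition. Using the monotonicity of $\cptrop$ provided by \Cref{lemma:cptrop_monoticity} together with a compactness argument, I would upgrade this to a \emph{uniform} corner gap: there exists $\delta^* > 0$ such that for all interior $j$ and all $\mu_u \in [\ubar\lambda, \lambda_{j-1}]$, $\mu_v \in [\lambda_{j+1}, \bar\lambda]$,
\[
  \distInf\bigl(p_j, \tsegm(\cptrop(\mu_u), \cptrop(\mu_v))\bigr) \geq \delta^* \, .
\]

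The core technical step is then a rigidity lemma: for $\eps < \delta^*/C$ with $C > 0$ suitable, no single tropical segment $\tsegm(u, v) \subset \Tcal([\ubar\lambda, \bar\lambda]; \eps)$ can have $u$ within $\eps$ of $\cptrop(\mu_u)$ for some $\mu_u \leq \lambda_{j-1}$ and $v$ within $\eps$ of $\cptrop(\mu_v)$ for some $\mu_v \geq \lambda_{j+1}$. The argument combines the Hausdorff-continuity of the map $(u, v) \mapsto \tsegm(u, v)$---which is manageable since tropical segments are piecewise linear in their endpoints with direction vectors in $\{0, \pm 1\}^n$---with the uniform corner gap above: such a segment would have to remain at distance at least $\delta^*/2$ from $p_j \in \cptrop([\ubar\lambda, \bar\lambda])$, contradicting containment in the $\eps$-tube.

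I would then conclude by a pigeonhole argument. To each vertex $q_\ell$ of the concatenation I assign a parameter $\mu_\ell \in [\ubar\lambda, \bar\lambda]$ with $\distInf(q_\ell, \cptrop(\mu_\ell)) \leq \eps$; the endpoint hypotheses give $\mu_0$ close to $\ubar\lambda$ and $\mu_k$ close to $\bar\lambda$. The rigidity lemma forces each range $[\min(\mu_\ell, \mu_{\ell+1}), \max(\mu_\ell, \mu_{\ell+1})]$ to contain at most one of the breakpoint values $\lambda_1, \ldots, \lambda_{\gamma-1}$. Since the union of these ranges must cover all $\gamma - 1$ interior breakpoints, we obtain $k \geq \gamma$. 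The main obstacle is the uniform corner gap: proving that $\distInf(p_j, \tsegm(\cptrop(\mu_u), \cptrop(\mu_v)))$ stays bounded away from $0$ as $(\mu_u, \mu_v)$ varies over the compact rectangle requires exploiting both the monotone structure of $\cptrop$ and the rigid $\{0, \pm 1\}$ direction structure of tropical segments, since a priori the tropical segment $\tsegm(\cptrop(\mu_u), \cptrop(\mu_v))$ may drift away from $\tsegm(p_{j-1}, p_{j+1})$ as its endpoints are perturbed along the central path.
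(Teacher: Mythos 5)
You should first note that the paper does not prove this proposition at all: it is imported verbatim as \cite[Prop.~28]{ABGJ18}, and the proof there rests on a finer structural analysis of the tropical central path than the ingredients you allow yourself (minimality of the decomposition, monotonicity, Lipschitz continuity, compactness). Measured against that, the crux of your argument --- the \emph{uniform corner gap} --- is not established, and it does not follow from what you invoke. Minimality only tells you that $\cptrop(\lambda_j)\notin\tsegm(p_{j-1},p_{j+1})$ for the two \emph{adjacent} breakpoints; the statement you need quantifies over all $\mu_u\le\lambda_{j-1}$, $\mu_v\ge\lambda_{j+1}$, and compactness can only upgrade pointwise positivity to a uniform bound \emph{after} non-membership is known for every such pair, which is a genuinely stronger claim. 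It is in fact false at the level of generality you work at: the monotone, $1$-Lipschitz concatenation $(1,0)\to(0,0)\to(0,-1)\to(-1,-2)$ in the plane consists of three pairwise non-mergeable tropical segments (so the corner gap holds at each interior vertex), yet the middle corner $(0,-1)$ lies \emph{exactly} on $\tsegm((1,0),(-1,-2))$, the tropical segment between an earlier and a later point of the curve. So any proof of the uniform gap must exploit that $\cptrop(\lambda)$ is the tropical barycenter of the sublevel set $\Ktrop_\lambda$ (for instance, when $\ctrop$ is supported on one coordinate, that coordinate drops at unit rate, which is what rules the above configuration out); this is precisely the structural work done in \cite{ABGJ18} and absent from your sketch. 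A second, more local issue in the rigidity step: containment of the segment in $\Tcal([\ubar\lambda,\bar\lambda];\eps)$ does not by itself force the segment to approach $p_j$, so ``stays $\delta^*/2$ away from $p_j$'' is not yet a contradiction; you need an intermediate-value argument on $x\mapsto\inpT{\ctrop}{x}$ (which is $1$-Lipschitz for $\distInf$ and equals $-\lambda$ along the path) to show that a connected subset of the tube joining levels on both sides of $\lambda_j$ must pass within $O(\eps)$ of $p_j$. That part is fixable; the uniform gap is the real gap.

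Even granting your rigidity lemma as stated, the final pigeonhole does not deliver the bound $\gamma([\ubar\lambda,\bar\lambda])$. The lemma only forbids a range $[\min(\mu_\ell,\mu_{\ell+1}),\max(\mu_\ell,\mu_{\ell+1})]$ from stretching from below $\lambda_{j-1}$ to above $\lambda_{j+1}$, i.e.\ from containing three consecutive breakpoints; a range containing two consecutive breakpoints $\lambda_j,\lambda_{j+1}$ (say $\mu_\ell$ slightly below $\lambda_j$ and $\mu_{\ell+1}$ slightly above $\lambda_{j+1}$) violates neither instance of the lemma, since that would require $\mu_\ell\le\lambda_{j-1}$ or $\mu_{\ell+1}\ge\lambda_{j+2}$. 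So each segment may advance past two breakpoints and you only get $k\ge(\gamma-1)/2$; even the stronger claim ``at most one breakpoint per range'', if it were available, yields only $k\ge\gamma-1$, because there are $\gamma-1$ interior breakpoints (note the case $\gamma=2$: your scheme cannot exclude a single segment joining the two end-balls). To reach $\gamma$ one needs a sharper statement --- in effect that a tropical segment in the tube with endpoints near $\cptrop(\mu_u)$ and $\cptrop(\mu_v)$ forces $\cptrop([\mu_u,\mu_v])$ to be a single tropical segment, after which subadditivity of $\gamma$ over the covering parameter intervals gives $k\ge\gamma$ --- and proving that again requires the two-parameter control discussed above, i.e.\ exactly the content of \cite[Prop.~28]{ABGJ18}.
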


In the following statement, the sequence of segments corresponds to the trajectory followed by the interior point method. In light of the discussion of~\Cref{subsec:neighborhoods}, we assume that it is contained in a multiplicative neighborhood of the central path.
\begin{theorem}\label{th:complexity}
Let $\ubar \lambda < \bar \lambda$. Provided that $t > 1$ is large enough, any sequence of segments $[x^0, x^1] \cup [x^1, x^2] \cup \dots \cup [x^{p-1}, x^p]$ contained in the neighborhood $\Mcal_{\multlow,\multupp,t}([t^{\ubar \lambda},t^{\bar \lambda}])$ of $\cpt$ and such that $x^0 \in \Mcal_{\multlow,\multupp,t}(t^{\ubar \lambda})$ and $x^p \in \Mcal_{\multlow,\multupp,t}(t^{\bar \lambda})$ contains at least $\gamma([\ubar \lambda, \bar \lambda])$ segments.
\end{theorem}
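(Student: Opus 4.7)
The plan is to associate, to each Euclidean segment $[x^k, x^{k+1}]$, a corresponding tropical segment $S_k \coloneqq \tsegm(\log_t x^k, \log_t x^{k+1})$, show that the concatenation $S_0 \cup \dots \cup S_{p-1}$ lies inside a tubular neighborhood of $\cptrop([\ubar \lambda, \bar \lambda])$, and then conclude via \Cref{prop:tube}.

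The first step is a quantitative comparison between an ordinary segment $[a,b] \subset \R_{>0}^n$ and its tropical counterpart $\tsegm(\log_t a, \log_t b)$ after logarithmic rescaling. Componentwise, the identity
\[
\log_t\bigl((1-\mu) t^\alpha + \mu t^\beta\bigr) = \max\bigl(\alpha + \log_t(1-\mu),\, \beta + \log_t \mu\bigr) + \delta, \qquad |\delta| \leq \log 2/\log t \, ,
\]
together with a rescaling of the combination weights by $s \coloneqq \max(\mu, 1-\mu) \in [1/2, 1]$ (so that $\lambda \coloneqq \log_t((1-\mu)/s)$ and $\nu \coloneqq \log_t(\mu/s)$ satisfy $\lambda \vee \nu = 0$, bringing them onto a valid parameterization of the tropical segment), yields the Hausdorff bound
\[
\distInf\bigl(\log_t [a,b],\, \tsegm(\log_t a, \log_t b)\bigr) \leq 2 \log 2 / \log t \, .
\]

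Combining this estimate with \Cref{thm:unif_conv_neighborhood}, I will establish the containment $S_0 \cup \dots \cup S_{p-1} \subset \Tcal([\ubar\lambda, \bar\lambda]; \eps)$ for $\eps \coloneqq (\Gamma_{\multlow,\multupp} + 2\log 2)/\log t$: any $y \in S_k$ lies within $2\log 2/\log t$ of $\log_t w$ for some $w \in [x^k, x^{k+1}] \subset \Mcal_{\multlow,\multupp,t}(t^\lambda)$ with $\lambda \in [\ubar\lambda, \bar\lambda]$, and $\log_t w$ is within $\Gamma_{\multlow,\multupp}/\log t$ of $\cptrop(\lambda)$ by \Cref{thm:unif_conv_neighborhood}. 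Applying the same theorem to the endpoints $x^0$ and $x^p$ furnishes $\log_t x^0 \in B_\infty(\cptrop(\ubar\lambda); \eps)$ and $\log_t x^p \in B_\infty(\cptrop(\bar\lambda); \eps)$. Taking $t$ large enough so that $\eps$ falls below the threshold of \Cref{prop:tube} then yields $p \geq \gamma([\ubar\lambda, \bar\lambda])$, as required. The main technical point is the uniform Hausdorff approximation of the first step, which must be controlled entrywise and uniformly in $\mu \in [0,1]$; however the elementary inequality $\max(u,v) \leq u+v \leq 2\max(u,v)$ for $u,v \geq 0$ makes this estimate routine, and the rest of the argument is bookkeeping.
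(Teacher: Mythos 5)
Your argument is correct and follows essentially the same route as the paper's proof: tropicalize each segment, show the resulting concatenation of tropical segments (and the two endpoints) stays in the tube $\Tcal([\ubar\lambda,\bar\lambda];\eps)$ by combining the segment-versus-tropical-segment estimate with \Cref{thm:unif_conv_neighborhood}, and invoke \Cref{prop:tube}. The only difference is that you re-derive the Hausdorff bound $\distInf(\log_t[a,b],\tsegm(\log_t a,\log_t b))\leq 2\log_t 2$ by an elementary weight-rescaling argument, whereas the paper simply cites \cite[Lemma~8]{ABGJ18} for the (slightly sharper) bound $\log_t 2$; this changes nothing in the conclusion.
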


\begin{proof}
We consider the associated sequence of tropical segments $\tsegm(\log_t x^i, \log_t x^{i+1})$ ($i \in [p]$). As shown in~\cite[Lemma~8]{ABGJ18}, we have $
d_\infty\big(\log_t [x^i, x^{i+1}], \tsegm(\log_t x^i, \log_t x^{i+1})\big) \leq \log_t 2$. Thus, taking $t$ large enough, we can suppose that:
\begin{equation}\label{eq:d_segm}
\text{for all} \enspace i \in [p] \, , \enspace d_\infty\big(\log_t [x^i, x^{i+1}], \tsegm(\log_t x^i, \log_t x^{i+1})\big) \leq \frac{\eps}{2} \, ,
\end{equation}
and, by \Cref{thm:unif_conv_neighborhood}, for all $\lambda\in\R$,
\begin{equation}\label{eq:d_neigh}
  \distInf(x, \cptrop(\lambda)) \leq \frac{\eps}{2} \, ,
  \text{ for all } x \in \Mcal_{\multlow,\multupp,t}(t^\lambda)
  \enspace,
\end{equation}
where $\eps > 0$ is chosen as in \Cref{prop:tube}. 

We claim that the sequence of tropical segments $\tsegm(\log_t x^i, \log_t x^{i+1})$ ($i \in [p]$) is included in the neighbhorhood $\Tcal([\ubar \lambda,\bar \lambda];\eps)$ of the tropical central path. Indeed, take $x \in \tsegm(\log_t x^i, \log_t x^{i+1})$. By~\eqref{eq:d_segm}, there exists $x' \in [x^i, x^{i+1}]$ such that $d_\infty(\log_t x,\log_t x') \leq \eps / 2$. Besides, since we have $[x^i, x^{i+1}] \subset \Mcal_{\multlow,\multupp,t}([t^{\ubar \lambda},t^{\bar \lambda}])$, there exists $\eta =t^\lambda \in [t^{\ubar \lambda}, t^{\bar \lambda}]$ such that $x' \in \Mcal_{\multlow,\multupp,t}(\eta)$. Using~\eqref{eq:d_neigh}, we get: 
\[
d_\infty(\log_t x,\cptrop(\lambda)) \leq
d_\infty(\log_t x,\log_t x')+
d_\infty(\log_t x',\cptrop(\lambda)) \leq \eps \, ,
\] 
showing the claim.
By the same argument, we have $\log_t x_0 \in B_\infty(\cptrop(\ubar \lambda);\eps)$ and $\log_t x^p \in B_\infty(\cptrop(\lambda);\eps)$. 

We conclude by applying \Cref{prop:tube}.
\end{proof}

\begin{remark}
  It is possible to give an explicit a lower bound for the value of $t$ in \Cref{th:complexity}, in terms of the constant $\Gamma$ of Theorem~\ref{thm:unif_conv_cp}, $\multlow$, $\multupp$ and the quantity $\eps$ of \Cref{prop:tube}. This would lead to a doubly exponential lower bound, like in the case of the logarithmic barrier (see~\cite{ABGJ18}).  See also~\cite[Fig.~2]{sirev} for a numerical illustration of the latter results, showing that in this case, and for doubly exponential values of $t$, the $\log_t$ image 
  of the trajectory followed by the predictor-corrector method
  effectively stays in a small neighborgood of the tropical central  path.
\end{remark}

The purpose of IPMs is to get an $\eps$-approximation of the optimal value. Therefore, it is relevant to establish a lower bound on the complexity expressed in terms of the value of the objective function rather than of the parameter $\eta$. In general, we know that $\inp{c}{\cp(\eta)} \leq \frac{\thet}{\eta}$ for all $\eta > 0$ (see \cref{lemma:cp_val_ineq}). The following result shows that, in the case of a parametric family of convex programs, the value of the objective function on the central path is actually in $\Omega(\frac{1}{\eta})$, under some mild assumptions. We set $\lambda^* \coloneqq -\sup_{x \in \Ktrop} \inpT{\ctrop}{x} \in \T$. 
\begin{proposition}
\label{prop:value_central_path}
For all $t \geq t_0$, for all $\eta > 0$ such that $\log_t \eta \geq \lambda^*$, we have:
\[
\frac{1}{\eta \exp(\Gamma + \delta^*)} \leq \inp{c_t}{\cpt(\eta)} \leq \frac{\vartheta}{\eta} \, .
\]
\end{proposition}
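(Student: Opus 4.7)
The upper bound $\inp{c_t}{\cpt(\eta)} \leq \vartheta/\eta$ is precisely the content of~\Cref{lemma:cp_val_ineq}, which is a standard feature of $\vartheta$-self-concordant barriers, and it does not use the hypothesis $\log_t \eta \geq \lambda^*$. For the lower bound, my strategy is to transport the inequality to the logarithmic scale by setting $\lambda \coloneqq \log_t \eta$, and then to combine the log-like inequality~\eqref{eq:log_ineq} with the uniform convergence result~\Cref{thm:unif_conv_cp}.

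More precisely, the target becomes $\log_t \inp{c_t}{\cpt(\eta)} \geq -\lambda - (\Gamma + \delta^*)/\log t$. Applying the left-hand inequality of~\eqref{eq:log_ineq} reduces this to showing that $\inpT{\log_t c_t}{\log_t \cpt(\eta)} \geq -\lambda - (\Gamma + \delta^*)/\log t$. By \Cref{assmpt:assmpt_K_c}\ref{item:i} we have $\log_t c_t \geq \ctrop - (\delta^*/\log t) \onevector$ entrywise, while \Cref{thm:unif_conv_cp} yields $\log_t \cpt(\eta) \geq \cptrop(\lambda) - (\Gamma/\log t) \onevector$ for $t \geq t_0$. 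Since the tropical inner product $\inpT{\cdot}{\cdot}$ is monotone in each of its arguments, I can deduce that
\[
\inpT{\log_t c_t}{\log_t \cpt(\eta)} \geq \inpT{\ctrop}{\cptrop(\lambda)} - \frac{\Gamma + \delta^*}{\log t} \, ,
\]
and the desired lower bound reduces to proving $\inpT{\ctrop}{\cptrop(\lambda)} \geq -\lambda$ whenever $\lambda \geq \lambda^*$.

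The main obstacle, and the only point where the assumption $\lambda \geq \lambda^*$ enters, lies in this last inequality. Note that the opposite bound $\inpT{\ctrop}{\cptrop(\lambda)} \leq -\lambda$ is immediate from the definition of $\cptrop(\lambda)$ as the tropical barycenter of $\{x \in \Ktrop : \inpT{\ctrop}{x} \leq -\lambda\}$. For the desired direction, the key structural fact is that $\Ktrop$ contains $-\infty$ (because $0 \in \Kt$ for every $t > 1$) and is tropically convex (\Cref{prop:trop_conv}); inspecting the tropical segment between $-\infty$ and a point $y$ shows that $y + \mu \onevector \in \Ktrop$ for all $y \in \Ktrop$ and all $\mu \leq 0$. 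Given $\lambda > \lambda^*$, I pick any $y \in \Ktrop$ with $\inpT{\ctrop}{y} > -\lambda$, which exists by definition of $\lambda^*$, and set $y' \coloneqq y + (-\lambda - \inpT{\ctrop}{y}) \onevector$; then $y' \in \Ktrop$ with $\inpT{\ctrop}{y'} = -\lambda$, so $\cptrop(\lambda) \geq y'$ and hence $\inpT{\ctrop}{\cptrop(\lambda)} \geq -\lambda$. The boundary case $\lambda = \lambda^*$ follows by letting $\lambda \downarrow \lambda^*$ and using the $1$-Lipschitz continuity of $\cptrop$ recorded in~\Cref{lemma:cptrop_monoticity} together with the continuity of $\inpT{\ctrop}{\cdot}$.
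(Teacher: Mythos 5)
Your proposal is correct, and at the top level it follows the same route as the paper: the upper bound is \Cref{lemma:cp_val_ineq}, and the lower bound comes from combining~\eqref{eq:log_ineq}, the distance bounds $\delta^*/\log t$ and $\Gamma/\log t$ from \Cref{assmpt:assmpt_K_c} and \Cref{thm:unif_conv_cp}, and a tropical statement about $\inpT{\ctrop}{\cptrop(\lambda)}$. The only divergence is in that last step. The paper proves the exact equality $\inpT{\ctrop}{\cptrop(\lambda)}=-\lambda$ by forming the point $\cptrop(\lambda)\vee(\nu+x)$ with a ``midpoint'' value $-(\lambda+\lambda')/2$ and deriving $\lambda=\lambda'$, invoking closedness of $\Ktrop$ to supply a point $x$ with $\inpT{\ctrop}{x}\geq-\lambda$ even when $\lambda=\lambda^*$; you instead prove only the inequality $\inpT{\ctrop}{\cptrop(\lambda)}\geq-\lambda$ (which is all the bound requires) by tropically rescaling a near-maximizer $y$ into $\Ktrop_\lambda$ and comparing with the barycenter, and you dispose of the boundary case $\lambda=\lambda^*$ by a limiting argument. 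Both are sound and use the same ingredients ($-\infty\in\Ktrop$, tropical convexity, the barycenter property); your variant is slightly more economical in that it sidesteps the attainment of the supremum defining $\lambda^*$, and in fact for the boundary case the monotonicity in \Cref{lemma:cptrop_monoticity} alone gives $\inpT{\ctrop}{\cptrop(\lambda^*)}\geq\inpT{\ctrop}{\cptrop(\lambda)}\geq-\lambda$ for all $\lambda>\lambda^*$, so you do not even need to invoke continuity of $\inpT{\ctrop}{\cdot}$.
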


\begin{proof}
The second inequality is precisely \cref{lemma:cp_val_ineq}. We focus on the first inequality.

We fix $t \geq t_0$ and $\eta > 0$. Let $\lambda \coloneqq \log_t \eta \geq \lambda^*$. We claim that $\inpT{\ctrop}{\cptrop(\lambda)} = -\lambda$. By definition of $\cptrop(\lambda)$, we know that $\inpT{\ctrop}{\cptrop(\lambda)} \leq -\lambda$. Let $\lambda' \coloneqq - \inpT{\ctrop}{\cptrop(\lambda)} \geq \lambda$. We introduce a point $x \in \Ktrop$ such that $\inpT{\ctrop}{x} \geq -\lambda$ (which is possible because $\Ktrop$ is closed, as a regular set). Ley $\nu \coloneqq (-\frac{\lambda + \lambda^\prime}{2} - \inpT{\ctrop}{x})$. Observe that $\nu \leq 0$. 
Therefore, the point $y \coloneqq \cptrop(\lambda) \vee (\nu + x)$ belongs to the set $\Ktrop$ (by tropical convexity). Besides, $\inpT{\ctrop}{y} = (-\lambda') \vee (-\frac{\lambda + \lambda'}{2}) = -\frac{\lambda + \lambda'}{2} \leq -\lambda$. We deduce that $y \in \Ktrop_\lambda$, so that $y \leq \cptrop(\lambda)$. However, by definition of $y$, we have $y \geq \cptrop(\lambda)$, hence $y = \cptrop(\lambda)$. We deduce that
\[
-\frac{\lambda + \lambda'}{2} = \inpT{\ctrop}{y} = \inpT{\ctrop}{\cptrop(\lambda)} = -\lambda' \, ,
\]
which implies $\lambda = \lambda'$.

Furthermore, by \cref{thm:unif_conv_cp}, we have $
\distInf(\log_t \cpt(\eta), \cptrop(\lambda)) \leq \frac{\Gamma}{\log(t)}$.
Combining this with the inequality~\eqref{eq:log_ineq} and the fact that $\distInf(\log_t c_t, \ctrop) \leq \frac{\delta^*}{\log t}$ yields
\[
\log_t \inp{c_t}{\cpt(\eta)} \geq \inpT{\ctrop}{\cptrop(\lambda)} - \frac{\Gamma + \delta^*}{\log t} = -\log_t \eta - \frac{\Gamma + \delta^*}{\log t} \, ,
\]
which implies $\inp{c_t}{\cpt(\eta)} \geq \frac{1}{\eta \exp(\Gamma+\delta^*)}$. This bound is valid for any $\eta > 0$ and $t \geq t_0$.
\end{proof}

\begin{lemma}\label{lemma:eta_bounds}
Let $t \geq t_0$. Suppose that $x \in \Mcal_{\multlow,\multupp,t}(\eta)$ for some $\eta \geq t^{\lambda^*}$. We have $
\frac{\multlow}{v \exp(\Gamma+\delta^*)} \leq \eta \leq \frac{\multupp \thet}{v}$, where $v = \inp{c_t}{x}$.
\end{lemma}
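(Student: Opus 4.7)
The plan is to combine the definition of the multiplicative neighborhood with the two-sided bound on the value of the objective along the central path provided by \Cref{prop:value_central_path}.

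First, by definition of $\Mcal_{\multlow,\multupp,t}(\eta)$, the point $x$ satisfies $\multlow \cpt(\eta) \leq x \leq \multupp \cpt(\eta)$ entrywise. Since $c_t \in \R_{\geq 0}^n$ and $\cpt(\eta) \in \interior \Kt \subset \R_{\geq 0}^n$, the inner products with $c_t$ preserve these inequalities, so
\[
\multlow \inp{c_t}{\cpt(\eta)} \leq v \leq \multupp \inp{c_t}{\cpt(\eta)} \, .
\]

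Next, because $\log_t \eta \geq \lambda^*$, I apply \Cref{prop:value_central_path} to control $\inp{c_t}{\cpt(\eta)}$ from both sides:
\[
\frac{1}{\eta \exp(\Gamma+\delta^*)} \leq \inp{c_t}{\cpt(\eta)} \leq \frac{\thet}{\eta} \, .
\]
Combining with the previous display gives $\frac{\multlow}{\eta \exp(\Gamma+\delta^*)} \leq v \leq \frac{\multupp \thet}{\eta}$, and rearranging each inequality for $\eta$ yields precisely the two bounds claimed.

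There is no real obstacle here: the lemma is a direct consequence of the two preceding results, and the only thing to check carefully is the monotonicity of the inner product $\inp{c_t}{\cdot}$ under the entrywise ordering on $\R_{\geq 0}^n$, which relies on the assumption (stated at the start of \Cref{sec:uniform}) that $c_t \in \R_{\geq 0}^n$ and $\Kt \subset \R_{\geq 0}^n$.
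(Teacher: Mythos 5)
Your proof is correct and follows the same route as the paper: the paper's own (one-line) proof likewise combines the entrywise inequalities defining $\Mcal_{\multlow,\multupp,t}(\eta)$ with the two-sided bound of \Cref{prop:value_central_path} to get $\frac{\multlow}{\eta \exp(\Gamma+\delta^*)} \leq v \leq \frac{\multupp \thet}{\eta}$ and then rearranges. Your additional remark on the monotonicity of $\inp{c_t}{\cdot}$ under the entrywise order, using $c_t \in \R_{\geq 0}^n$, just makes explicit a step the paper leaves implicit.
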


\begin{proof}[\proofname{} of \Cref{lemma:eta_bounds}]
Using the definition of $\Mcal_{\multlow,\multupp,t}(\eta)$ and \Cref{prop:value_central_path}, we have
\[
\frac{\multlow}{\eta \exp(\Gamma+\delta^*)} \leq v \leq \frac{\multupp \thet}{\eta} \, . \qedhere
\]
\end{proof}

\begin{theorem}\label{th:complexity2}
Consider a sequence of segments $[x^0, x^1] \cup [x^1, x^2] \cup \dots \cup [x^{p-1}, x^p]$ contained in the neighborhood $\Mcal_{\multlow,\multupp,t}([t^{\lambda^*}, +\infty[)$ of $\cpt$, and such that $\ubar v \coloneqq \inp{c_t}{x^0} \geq \inp{c_t}{x^p} =: \bar v$. 

Provided that $t$ is large enough, this sequence contains at least $\gamma([-\log_t \ubar v, -\log_t \bar v])$ segments.
\end{theorem}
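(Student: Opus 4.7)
The plan is to apply \Cref{prop:tube} directly with the interval $[\ubar\lambda, \bar\lambda] = [-\log_t \ubar v, -\log_t \bar v]$, rather than to route through \Cref{th:complexity} (whose hypothesis requiring the whole sequence to lie in $\Mcal_{\multlow,\multupp,t}([t^{\ubar\lambda}, t^{\bar\lambda}])$ for a specific pair $(\ubar\lambda,\bar\lambda)$ we cannot easily arrange). The key translation device is \Cref{lemma:eta_bounds}: for any $x \in \Mcal_{\multlow,\multupp,t}(\eta)$ with $\eta \geq t^{\lambda^*}$, the parameter $\log_t \eta$ and the value exponent $-\log_t \inp{c_t}{x}$ agree up to an $O(1/\log t)$ error. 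This converts the value-based hypothesis $\ubar v \geq \bar v$ of \Cref{th:complexity2} into the parameter-based hypothesis $-\log_t \ubar v \leq -\log_t \bar v$ required by \Cref{prop:tube}.

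\textbf{Endpoint conditions.} Pick $\eta_0 \geq t^{\lambda^*}$ with $x^0 \in \Mcal_{\multlow,\multupp,t}(\eta_0)$. Combining \Cref{thm:unif_conv_neighborhood} (which places $\log_t x^0$ close to $\cptrop(\log_t \eta_0)$), \Cref{lemma:eta_bounds} (which places $\log_t \eta_0$ close to $-\log_t \ubar v$), and the $1$-Lipschitz property of $\cptrop$ from \Cref{lemma:cptrop_monoticity} (which places $\cptrop(\log_t \eta_0)$ close to $\cptrop(-\log_t \ubar v)$), all errors being of order $1/\log t$, one obtains $\log_t x^0 \in B_\infty(\cptrop(-\log_t \ubar v); \eps)$ for $t$ large enough, where $\eps$ is the tolerance from \Cref{prop:tube}. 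The symmetric argument handles $x^p$.

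\textbf{Tube condition.} For any $x'$ on an ordinary segment, choose $\eta' \geq t^{\lambda^*}$ with $x' \in \Mcal_{\multlow,\multupp,t}(\eta')$, set $v' = \inp{c_t}{x'}$, and apply the same triangle of estimates to place $\log_t x'$ within $\eps/2$ of $\cptrop(-\log_t v')$. Together with the estimate $d_\infty(\log_t [x^{i-1},x^i], \tsegm(\log_t x^{i-1}, \log_t x^i)) \leq \log_t 2$ (the bound used in the proof of \Cref{th:complexity}), the tropical segments sit within $\eps$ of $\cptrop(-\log_t v')$. When $v' \in [\bar v, \ubar v]$, this immediately places those tropical segments in $\Tcal([-\log_t \ubar v, -\log_t \bar v]; \eps)$, and \Cref{prop:tube} then yields the desired bound $p \geq \gamma([-\log_t \ubar v, -\log_t \bar v])$.

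\textbf{Main obstacle.} The delicate point is that the intermediate values $v' = \inp{c_t}{x'}$ are not a priori constrained to lie in $[\bar v, \ubar v]$---the trajectory could in principle excursion outside this range. In all IPMs covered by \Cref{subsec:neighborhoods} (short/long-step and predictor-corrector), the objective values decrease monotonically along iterations, and since $\inp{c_t}{\cdot}$ is affine, monotonically along each segment as well, so the issue does not arise. In the general case, the intermediate value theorem allows one to extract a sub-sequence of segment pieces along which $v$ decreases monotonically from $\ubar v$ to $\bar v$, using at most $p$ pieces and still satisfying the tube and endpoint conditions, thereby reducing to the monotone case.
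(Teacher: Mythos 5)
Your proposal is correct and runs on the same machinery as the paper's proof --- \Cref{thm:unif_conv_neighborhood}, the value--parameter dictionary of \Cref{lemma:eta_bounds}, the Lipschitz bound of \Cref{lemma:cptrop_monoticity}, and the $\log_t 2$ approximation of ordinary segments by tropical ones, all fed into \Cref{prop:tube} --- but it organizes the reduction differently. The paper first treats the case where \emph{every} breakpoint value $\inp{c_t}{x^i}$ lies in $[\bar v, \ubar v]$, applies \Cref{prop:tube} on a slightly enlarged parameter interval $[-\log_t \ubar v + \log_t \tfrac{\multlow}{\exp(\Gamma+\delta^*)},\, -\log_t \bar v + \log_t (\multupp\thet)]$ and then uses monotonicity of $\gamma$; in the general case it \emph{enlarges} the value window to $[\bar v', \ubar v']$ given by the minimum and maximum of $\inp{c_t}{x^i}$ over all iterates and invokes monotonicity of $\gamma$ once more. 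You instead \emph{shrink} the trajectory rather than enlarge the window: you pass to a connected sub-path, made of at most $p$ sub-segments (each contained in an original segment, hence still in $\Mcal_{\multlow,\multupp,t}([t^{\lambda^*},+\infty[)$), whose values stay in $[\bar v,\ubar v]$ and whose endpoint values are exactly $\ubar v$ and $\bar v$, and you then apply \Cref{prop:tube} directly with the interval $[-\log_t \ubar v, -\log_t \bar v]$. This is a legitimate alternative, and arguably the cleaner one, because it keeps the endpoints of the path aligned with the endpoints of the tube, which is precisely what \Cref{prop:tube} requires.

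One phrase does need correcting: you cannot in general extract pieces ``along which $v$ decreases monotonically'' --- if the value oscillates inside $[\bar v,\ubar v]$, no connected monotone sub-path exists. What the intermediate value theorem actually gives (take $s_2$ the first parameter along the polygonal path at which the affine-along-segments value equals $\bar v$, then $s_1$ the last parameter in $[0,s_2]$ at which it is $\geq \ubar v$) is a connected sub-path with endpoint values exactly $\ubar v$ and $\bar v$ and all intermediate values in $[\bar v,\ubar v]$. This weaker property is the only thing your tube and endpoint arguments use, so the proof goes through verbatim once the claim is stated this way; the remark about monotone decrease in concrete IPMs is not needed, since the theorem concerns arbitrary segment sequences.
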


\begin{proof}
We first deal with the case where $\ubar v \geq \inp{c_t}{x^i} \geq \bar v$ for all $i \in [p]$.

Take $t \geq t_0$. We claim that the sequence of segments is included in $\Mcal_{\multlow,\multupp,t}\big([\frac{\multlow}{\bar v \exp(\Gamma+\delta^*)}, \frac{\multupp \thet}{\ubar v}]\big)$. Indeed, let $x \in [x^i, x^{i+1}]$, and $\eta \geq t^{\lambda^*}$ such that $x \in \Mcal_{\multlow,\multupp,t}(\eta)$. Using \Cref{lemma:eta_bounds} and $\inp{c_t}{x} \in [\ubar v, \bar v]$, we get that $\eta \in [\frac{\multlow}{\bar v \exp(\Gamma+\delta^*)}, \frac{\multupp \thet}{\ubar v}]$. 

We apply the same arguments as in the proof of \Cref{th:complexity}, and consider $t$ sufficiently large so that~\eqref{eq:d_segm} and~\eqref{eq:d_neigh} hold. Setting $\ubar \lambda \coloneqq -\log_t \ubar v + \log_t \frac{\multlow}{\exp(\Gamma+\delta^*)}$ and $\bar \lambda \coloneqq -\log_t \bar v + \log_t \multupp \thet$, we obtain that the sequence of tropical segments $\tsegm(\log_t x^i, \log_t x^{i+1})$ is contained in $\Tcal([\ubar \lambda,\bar \lambda];\eps)$.

Moreover, let $\eta^0 \geq t^{\lambda^*}$ such that $x^0 \in \Mcal_{\multlow,\multupp,t}(\eta^0)$. By~\eqref{eq:d_neigh}, we have $d_\infty(\log_t x^0, \cptrop(-\log_t \eta^0)) \leq \eps / 2$. Besides, by \Cref{lemma:cptrop_monoticity} and \Cref{lemma:eta_bounds}, we have:
\[
d_\infty(\cptrop(-\log_t \eta^0), \cptrop(\ubar \lambda)) \leq \log_t \eta^0 - \ubar \lambda \leq \log_t \multupp \thet \leq \eps / 2\, .
\]
where the last inequality is obtained up to taking $t$ slightly larger. This proves that $\log_t x^0 \in B_\infty(\cptrop(\ubar \lambda); \eps)$. The same kind of argument shows that $\log_t x^p \in B_\infty(\cptrop(\bar \lambda); \eps)$. We deduce from \Cref{prop:tube} that $p \geq \gamma([\ubar \lambda, \bar \lambda])$. 

We note that $\ubar m < 1 < \bar m$, and $\Gamma$ and $\delta^*$ are positive. Without loss of generality, we can also assume that $\thet \geq 1$.\footnote{The constant $\thet$ is an upper bound on the complexity values of the barriers $f_t$.} We deduce that $[-\log_t \ubar v + \log_t \frac{\multlow}{\exp(\Gamma+\delta^*)}, -\log_t \bar v + \log_t \multupp \thet] \supset [-\log_t \ubar v, -\log_t \bar v]$. Since $\gamma(\cdot)$ is monotone, we obtain the expected result.

We finally deal with the general case. We let ${\ubar v}' \coloneqq \max_i \inp{c_t}{x^i} \geq \ubar v$ and ${\bar v}' \coloneqq \min_i \inp{c_t}{x^i} \leq \bar v$. Then, there exists a subsequence $[x^k, x^{k+1}] \cup \dots \cup [x^{l-1}, x^l]$ such that for all $i \in \{k, k+1, \dots, l\}$, ${\ubar v}' \geq \inp{c_t}{x^i} \geq {\bar v}'$. Thus, $p \geq l-k \geq \gamma([-\log_t {\ubar v}', -\log_t {\bar v}'])$ by the first part of the proof. Besides, the latter quantity is lower bounded by $\gamma([-\log_t \ubar v, -\log_t \bar v])$. 
\end{proof}

\section{Application to the linear program~\eqref{eq:cex}}\label{sec:cube}

In this section, we apply the result of Section~\ref{sec:complexity} to the linear program~\eqref{eq:cex}. As shown in Lemma~\ref{lemma:isval} (in Appendix~\ref{app:cube}), the log-limit of the feasible set of~\eqref{eq:cex} is given by the set of points $x \in \T^n$ satisfying the following inequalities
\begin{equation}\label{eq:tcex}
\allowdisplaybreaks
\begin{aligned}
\bigvee_{j = 1}^{i-1} (-u_i + x_j) \vee (-u_{i+1} + 1 + x_i) & \leq \bigvee_{j = i+1}^{n-1} (-u_j + x_j) \vee x_n \vee (-u_n) && \text{for all}\enspace i = 1, \dots, n-1 \\
\bigvee_{j = 1}^n x_j & \leq 0 \\
-\infty \leq x_1 \leq x_2 & \leq \dots \leq x_{n-1} \leq u_n + x_n \, .
\end{aligned} \tag{$\tcex_n$}
\end{equation}
These inequalities defines a tropical polyhedron (even a tropical polytope). Lemma~\ref{lemma:isval} further provides the fact that Assumption~\ref{assmpt:assmpt_K_c} is fulfilled.

The point $\cptrop(\lambda)$ of the tropical central path with parameter $\lambda \in \R$ is therefore given by the tropical barycenter (\ie, the supremum w.r.t.~the componentwise order) of the set of points $x \in \T^n$ satisfying~\eqref{eq:tcex} and $x_n \leq -\lambda$. The following lemma is straightforward:
\begin{lemma}\label{lemma:x_n}
For all $\lambda \in \R$, $\big(\cptrop(\lambda)\big)_n = \min(-\lambda, 0)$. Moreover, $\cptrop(\lambda) = 0$ for all $\lambda \leq 0$.
\end{lemma}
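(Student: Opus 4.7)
The plan is to identify the cost vector and then compute the supremum of $x_n$ over $\Ktrop_\lambda$, which is by definition the $n$-th component of $\cptrop(\lambda)$, since the tropical barycenter is the componentwise supremum. The cost of \eqref{eq:cex} is $c_t = e^n$ (independent of $t$), so its log-limit is $\ctrop = (-\infty,\dots,-\infty,0)$ and $\inpT{\ctrop}{x}=x_n$ for every $x\in\T^n$. Consequently $\Ktrop_\lambda=\{x\in\Ktrop\colon x_n\leq-\lambda\}$ and $\big(\cptrop(\lambda)\big)_n=\sup\{x_n\colon x\in\Ktrop_\lambda\}$.

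For the upper bound, I will combine $x_n\leq-\lambda$ with the constraint $\bigvee_{j=1}^n x_j\leq 0$ from~\eqref{eq:tcex}, which forces $x_n\leq 0$; hence every $x\in\Ktrop_\lambda$ satisfies $x_n\leq\min(-\lambda,0)$. For the matching lower bound, I will check directly from~\eqref{eq:tcex} that the all-zero vector $0$ belongs to $\Ktrop$: the ordering constraints and $\bigvee x_j\leq 0$ are trivial, and each inequality in the first batch reduces to $\max(-u_i,1-u_{i+1})\leq 0$, which holds since $u_i\geq 1$ for $i\geq 1$ (and the right-hand side even contains $0$). I will also verify that $-\infty\cdot e\in\Ktrop$, which is immediate since every inequality in~\eqref{eq:tcex} has at least one finite term on its right-hand side. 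Then, by tropical convexity, the points of the tropical segment between $0$ and $-\infty\cdot e$ lie in $\Ktrop$; taking the combination $(\mu\cdot e+0)\vee(0\cdot e+(-\infty\cdot e))=\mu\cdot e$ for any $\mu\leq 0$ shows $\mu\cdot e\in\Ktrop$. Setting $\mu=\min(-\lambda,0)$ produces a point of $\Ktrop_\lambda$ whose $n$-th coordinate attains the upper bound, completing the first claim.

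For the ``moreover'' statement, when $\lambda\leq 0$ the constraint $x_n\leq-\lambda$ is implied by $x_n\leq 0$, so $\Ktrop_\lambda=\Ktrop$. Every point of $\Ktrop$ satisfies $x_j\leq 0$ for all $j$ (again by $\bigvee_j x_j\leq 0$), so the componentwise supremum of $\Ktrop_\lambda$ is componentwise bounded above by $0$; since $0\in\Ktrop_\lambda$ by the verification above, the supremum is exactly $0$, i.e.\ $\cptrop(\lambda)=0$.

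No real obstacle is expected: the only mildly delicate point is the verification that $0\in\Ktrop$, which requires inspecting the inequalities~\eqref{eq:tcex} coordinate-by-coordinate and using $u_k=3\cdot 2^{k-2}-1\geq 1$ for $k\geq 1$; everything else is a direct consequence of tropical convexity of $\Ktrop$ (Proposition~\ref{prop:trop_conv}) together with the already-established characterization of $\cptrop$ as the componentwise supremum of $\Ktrop_\lambda$.
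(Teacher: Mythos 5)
Your argument is correct and essentially the paper's: the upper bound comes from $x_n \leq -\lambda$ together with $\bigvee_j x_j \leq 0$, and the lower bound from an explicit feasible witness (the paper checks the point $(-\infty,\dots,-\infty,\min(-\lambda,0))$ directly against~\eqref{eq:tcex}, while you reach $\min(-\lambda,0)\,\onevector$ via tropical convexity from $0$ and $-\infty\,\onevector$ --- a harmless variation), and the ``moreover'' part is handled the same way in both proofs. One small correction: $u_1 = 1/2$, so your claim ``$u_k \geq 1$ for $k \geq 1$'' fails at $k=1$; the verification that $0 \in \Ktrop$ only needs $u_i \geq 0$ and $u_{i+1} \geq 1$ for $i \geq 1$ (as the paper states), which do hold, so the conclusion is unaffected.
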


\begin{proof}
For all $x$ satisfying~\eqref{eq:tcex} and $x_n \leq -\lambda$, we have $x_n \leq \min(-\lambda, 0)$. Moreover, the point $\begin{bsmallmatrix} -\infty_{n-1} \\ \min(-\lambda, 0) \end{bsmallmatrix}$ satisfies the constraints of~\eqref{eq:tcex}. This proves that $\big(\cptrop(\lambda)\big)_n = \min(-\lambda, 0)$. 

Now suppose that $\lambda \leq 0$.
Since any point $x$ satisfying~\eqref{eq:tcex} verifies $x \leq 0$ (owing to the inequality at the second line in~\eqref{eq:tcex}) and the point $0$ satisfies~\eqref{eq:tcex} (owing to the presence of $x_n$ at the right-hand side of the first series of inequalities in~\eqref{eq:tcex}, and to the fact that $0\leq u_i$ and $u_{i+1}\geq 1$ for all $i\geq 1$), we deduce that $\cptrop(\lambda) = 0$.
\end{proof}

We now fully describe the tropical central path associated of the linear program~\eqref{eq:cex}. This description is done by induction on $n \geq 1$, which motivates to use the notation $\cptrop_n$ rather than simply $\cptrop$ in order to keep track of the dependency on $n$. We  prove that $\cptrop_n$ is made of two copies of $\cptrop_{n-1}$ (resp.~when $\lambda \leq u_n-1$ and~$\lambda \geq u_n+1$) and an extra tropical segment inbetween. The first copy corresponds to lifting $\cptrop_{n-1}$ to $n$-dimensional space by inserting the constant value $0$ for $x_{n-1}$. The second copy is essentially a shift of $\cptrop_{n-1}$. We refer to Table~\ref{tab:value} for a value table of the tropical central path of~\eqref{eq:cex} in the case where $n = 2, 3, 4$, and to Figure~\ref{fig:trop_cp} for an illustration when $n = 3$. In the following proposition, given a vector $z$ of size $k$ and $I \subset [k]$, the notation $z_I$ stands for the vector $(z_i)_{i \in I}$. 
 
\begin{proposition}\label{prop:trop_cp}
The tropical central path is given by the following relations: for all $\lambda \in \R$,
\begin{align*}
\allowdisplaybreaks
\cptrop_1 (\lambda) & = \min(0,-\lambda) \\	
\text{and for all} \enspace n > 1 \, , \enspace \cptrop_n(\lambda) & = 
\begin{dcases}
0 & \text{if} \enspace \lambda \leq 0 \, , \\
\begin{bsmallmatrix}
(\cptrop_{n-1}(\lambda))_{[n-2]} \\
0 \\
-\lambda
\end{bsmallmatrix} & \text{if} \enspace 0 \leq \lambda \leq u_n - 1 \, , \\[1ex]
\begin{bsmallmatrix} 
-u_{n-1} e \\
0 \\
-(u_n - 1) 
\end{bsmallmatrix}
+
\begin{bsmallmatrix}
\big[((u_n - 1) - \lambda) \vee (-1)\big] e \\
(u_n - 1) -\lambda  
\end{bsmallmatrix}
& \text{if} \enspace u_n - 1 \leq \lambda \leq u_n + 1 \, , \\[1ex]
\begin{bsmallmatrix} 
-(u_{n-1} + 1)e \\
-1 \\
-(u_n + 1) 
\end{bsmallmatrix}
+
\begin{bsmallmatrix} 
\cptrop_{n-1}(\lambda - (u_n + 1)) \\
u_n + 1 - \lambda
\end{bsmallmatrix} & \text{if} \enspace u_n + 1 \leq \lambda \, .
\end{dcases}
\end{align*}
\end{proposition}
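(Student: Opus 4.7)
The proof will proceed by induction on $n$. The base case $n = 1$ is the first line of the statement, and the subcase $\lambda \leq 0$ for any $n$ is handled by~\Cref{lemma:x_n}. For the inductive step, in each of the three nontrivial ranges of $\lambda$ I will verify two things: \textbf{feasibility} of the proposed vector $v$ (i.e., $v$ satisfies~\eqref{eq:tcex} together with $v_n \leq -\lambda$), and \textbf{optimality} (any feasible $x$ lies componentwise below $v$). The feasibility check is a direct substitution using the recursions $u_n = 2u_{n-1}+1$ and $u_n + 1 = 2(u_{n-1}+1)$, together with the induction hypothesis for $\cptrop_{n-1}$ at an appropriate shifted parameter.

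For optimality, I plan to bound the coordinates of $x$ top-down. The bound $x_n \leq -\lambda$ is immediate. The bound on $x_{n-1}$ comes from a dichotomy in the $i = n-1$ constraint of~\eqref{eq:tcex}, whose right-hand side equals $x_n$ when $x_n \geq -u_n$ (so $x_{n-1} \leq u_n - 1 + x_n$) and equals $-u_n$ otherwise (so $x_{n-1} \leq -1$); combined with $\bigvee_j x_j \leq 0$, this produces the three distinct piecewise linear behaviors of $x_{n-1}$. Crucially, the same $i = n-1$ constraint also bounds every $x_j$ with $j \leq n-2$ via $x_j \leq u_{n-1} + \max(x_n, -u_n) = u_{n-1} - \min(\lambda, u_n)$. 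In the transition range $u_n - 1 \leq \lambda \leq u_n + 1$, these bounds already match the formula $-u_{n-1} + ((u_n-1) - \lambda) \vee (-1)$ in both subranges $[u_n - 1, u_n]$ and $[u_n, u_n + 1]$, so no further reduction is needed there.

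For the first $n-2$ coordinates in the left range $0 \leq \lambda \leq u_n - 1$ and the shifted range $\lambda \geq u_n + 1$, I will reduce to the rank-$(n-1)$ problem. In the left case I consider the auxiliary vector $y \coloneqq (x_1, \ldots, x_{n-2}, -\lambda) \in \T^{n-1}$, show it satisfies the rank-$(n-1)$ analogue of~\eqref{eq:tcex} with $y_{n-1} \leq -\lambda$, and invoke the induction hypothesis to get $y_{[n-2]} \leq (\cptrop_{n-1}(\lambda))_{[n-2]}$. In the shifted case the same conclusion will hold for the translated vector $w \coloneqq (x_1 + (u_{n-1}+1), \ldots, x_{n-2} + (u_{n-1}+1), x_{n-1}+1)$ compared to $\cptrop_{n-1}$ at parameter $\mu \coloneqq \lambda - (u_n+1)$; the identity $u_n + 1 = 2(u_{n-1}+1)$, equivalently $-u_n + (u_{n-1}+1) = -u_{n-1}$, is what makes the first-series right-hand sides line up after the translation.

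The main obstacle will be verifying that $y$ and $w$ are genuinely feasible for the rank-$(n-1)$ constraints. The first-series inequalities transport from the rank-$n$ ones via $x_{n-1} \leq 0$, $x_n \leq -\lambda$, and the identity $-u_{n-1} \vee -\lambda \vee -u_n = -\min(\lambda, u_{n-1})$ (left case), or via the translation together with $x_n \leq -\lambda \leq -(u_n+1)$ (shifted case). The chain inequality $y_{n-2} \leq u_{n-1} - \lambda$ does \emph{not} follow from the rank-$n$ chain and will have to be extracted from the $i = n-1$ bound $x_{n-2} \leq u_{n-1} - \lambda$ derived above; the chain for $w$, by contrast, reduces to $x_{n-2} \leq x_{n-1}$ and is therefore free. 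Similarly, the sum constraint for $w$ requires the sharper bound $x_j \leq -(u_{n-1}+1)$ for $j \leq n-2$, which again comes from the $i = n-1$ constraint using $x_n < -u_n$ in the shifted regime. Once these technical checks are in place, the proof assembles routinely from the established componentwise bounds.
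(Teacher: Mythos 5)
Your proposal is correct and follows essentially the same route as the paper's own proof: both directions (feasibility of the explicit vector and domination of every feasible point) are established exactly as in the paper, with the same key devices — \Cref{lemma:x_n}, the $i=n-1$ constraint to bound $x_{n-1}$ and $x_j$ ($j\leq n-2$), the projection $(x_{[n-2]},-\lambda)$ onto the rank-$(n-1)$ problem for $0\leq\lambda\leq u_n-1$, and the translation by $u_{n-1}+1$ (using $-u_n+(u_{n-1}+1)=-u_{n-1}$) for $\lambda\geq u_n+1$. The only difference is presentational (you package the recursion as an induction on $n$), so no changes are needed.
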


\begin{table}
\caption{Value table of the tropical central path of~\eqref{eq:cex} for $0 \leq \lambda \leq 2 u_n$, when $n = 2$~(left), $3$~(right), and $4$~(bottom).}\label{tab:value}
\medskip
\centering
\scriptsize
\begin{tabular}{c@{\quad}r@{\quad}r@{\quad}r@{\quad}r@{\quad}r}
\toprule
$\lambda$ & 0 & 1 & 2 & 3 & 4 \\
\midrule
$x_1$ & 0 & 0 & -1 & -1 & -2 \\
$x_2$ & 0 & -1 & -2 & -3 & -4 \\
\bottomrule
\end{tabular}
\qquad
$
\begin{array}{c@{\quad}r@{\quad}r@{\quad}r@{\quad}r@{\quad}r@{\quad}r@{\quad}r@{\quad}r@{\quad}r@{\quad}r@{\quad}r}
\toprule
\lambda & 0 & 1 & 2 & 3 & 4 & 5 & 6 & 7 & 8 & 9 & 10\\
\midrule
x_1 & 0 & 0 & -1 & -1 & -2 & -3 & -3 & -3 & -4 & -4 & -5 \\
x_2 & 0 & 0 & 0 & 0 & 0 & -1 & -1 & -2 & -3 & -4 & -5 \\
x_3 & 0 & -1 & -2 & -3 & -4 & -5 & -6 & -7 & -8 & -9 & -10 \\
\bottomrule
\end{array}$

\bigskip
\centering
\scriptsize
$
\begin{array}{c@{\quad}r@{\;\;}r@{\;\;}r@{\;\;}r@{\;\;}r@{\;\;}r@{\;\;}r@{\;\;}r@{\;\;}r@{\;\;}r@{\;\;}r@{\;\;}r@{\;\;}r@{\;\;}r@{\;\;}r@{\;\;}r@{\;\;}r@{\;\;}r@{\;\;}r@{\;\;}r@{\;\;}r@{\;\;}r@{\;\;}r}
\toprule
\lambda & 0 & 1 & 2 & 3 & 4 & 5 & 6 & 7 & 8 & 9 & 10 & 11 & 12 & 13 & 14 & 15 & 16 & 17 & 18 & 19 & 20 & 21 & 22\\
\midrule
x_1 & 0 & 0 & -1 & -1 & -2 & -3 & -3 & -3 & -4 & -4 & -5 & -6 & -6 & -6 & -7 & -7 & -8 & -9 & -9 & -9 & -10 & -10 & -11 \\
x_2 & 0 & 0 & 0 & 0 & 0 & -1 & -1 & -2 & -3 & -4 & -5 & -6 & -6 & -6 & -6 & -6 & -6 & -7 & -7 & -9 & -9 & -10 & -11 \\
x_3 & 0 & 0 & 0 & 0 & 0 & 0 & 0 & 0 & 0 & 0 & 0 & -1 & -1 & -2 & -3 & -4 & -5 & -6 & -7 & -8 & -9 & -10 & -11 \\
x_4 & 0 & -1 & -2 & -3 & -4 & -5 & -6 & -7 & -8 & -9 & -10 & -11 & -12 & -13 & -14 & -15 & -16 & -17 & -18 & -19 & -20 & -21 & -22\\
\bottomrule
\end{array}$

\end{table}

\begin{theorem}\label{th:cex_gamma}
  The following equalities hold:
  \begin{align*}
    \gamma([0,u_n-1]) &= 2^{n-1} - 1\enspace ,\\
\gamma([0,2u_n-1]) &= 2^n - 2\enspace,\\
\gamma([0,2u_n]) &= \gamma([0,+\infty[) = 2^n - 1 \enspace .
    \end{align*}
\end{theorem}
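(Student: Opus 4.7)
The proof proceeds by induction on $n \geq 1$, using the recursive structure of $\cptrop_n$ described in Proposition~\ref{prop:trop_cp} and the identity $u_n - 1 = 2u_{n-1}$ (which follows from $u_n = 2u_{n-1} + 1$). The base case $n = 1$ is direct: $\cptrop_1(\lambda) = -\lambda$ over $[0, 2u_1] = [0, 1]$ is a single tropical segment, while $[0, u_1 - 1]$ and $[0, 2u_1 - 1]$ are degenerate, yielding $\gamma = (0, 0, 1) = (2^{0} - 1,\, 2^{1} - 2,\, 2^{1} - 1)$.

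For the inductive step, I split $\cptrop_n$ restricted to $[0, +\infty[$ into the three pieces given by Proposition~\ref{prop:trop_cp} and analyze each. On $[0, u_n - 1] = [0, 2u_{n-1}]$, $\cptrop_n$ is obtained from $\cptrop_{n-1}$ by inserting a constant $0$ coordinate at position $n-1$ (using \Cref{lemma:x_n} to match the last coordinate). Inserting or dropping a coordinate that takes a common constant value at both endpoints of a tropical segment yields a tropical segment, so there is a bijection between tropical-segment decompositions, giving $\gamma([0, u_n - 1]) = \gamma_{n-1}([0, 2u_{n-1}]) = 2^{n-1} - 1$ by induction. On $[u_n - 1, u_n + 1]$, the formula exhibits $\cptrop_n(\lambda)$ as a tropical combination with $\lambda$-dependent weights tropically summing to zero of two fixed endpoints, so this piece is a single tropical segment. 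On $[u_n + 1, +\infty[$, one has $\cptrop_n(\lambda) = \phi(\cptrop_{n-1}(\lambda - (u_n + 1)))$, where $\phi\colon \R^{n-1} \to \R^n$ is the affine map that translates the first $n - 2$ coordinates and embeds the last coordinate of $\R^{n-1}$ into positions $n-1$ and $n$ with two distinct affine shifts (respectively $-1$ and $-(u_n + 1)$). A direct check shows that $\phi$ commutes with tropical combinations, so $\phi$ and its partial inverse on the image preserve tropical segments, giving $\gamma([u_n + 1,\, u_n + 1 + \bar\mu]) = \gamma_{n-1}([0, \bar\mu])$ for every $\bar\mu \in [0, +\infty]$.

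The delicate remaining step is to argue that the three pieces cannot be merged, i.e.~that $u_n - 1$ and $u_n + 1$ are mandatory endpoints in any tropical-segment decomposition of $\cptrop_n$. The key structural observation is: on a tropical segment from $u$ to $v$ with $u \geq v$ componentwise, parameterized by $T_i(\alpha) = (u_i + \alpha) \vee v_i$ with $\alpha$ decreasing from $0$, each coordinate is first strictly decreasing (at unit rate in $\alpha$) and then constantly equal to $v_i$; in particular, no coordinate can be constant on an initial sub-arc and only then start strictly decreasing. Now, inspection of the explicit formula shows that the $(n-1)$-th coordinate of $\cptrop_n$ equals $0$ on $[0, u_n - 1]$ and equals $u_n - 1 - \lambda$ on $[u_n - 1, u_n]$, exhibiting a constant-to-strictly-decreasing transition at $\lambda = u_n - 1$; similarly it is $-1$ on $[u_n, u_n + 1]$ and strictly decreases past $u_n + 1$. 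By the structural fact, neither transition can lie in the interior of a tropical segment, so both $u_n - 1$ and $u_n + 1$ must be decomposition endpoints.

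Combining, for $\bar\mu$ successively equal to $2u_{n-1}$, $2u_{n-1} - 1$, and $+\infty$, the total count splits as $\gamma([0, u_n - 1]) + 1 + \gamma_{n-1}([0, \bar\mu])$, which by the induction hypothesis gives $\gamma([0, 2u_n]) = \gamma([0, +\infty[) = (2^{n-1} - 1) + 1 + (2^{n-1} - 1) = 2^n - 1$ and $\gamma([0, 2u_n - 1]) = (2^{n-1} - 1) + 1 + (2^{n-1} - 2) = 2^n - 2$, together with $\gamma([0, u_n - 1]) = 2^{n-1} - 1$ from the first piece alone. The main obstacle is the rigorous ``mandatory endpoint'' argument: one must carefully formalize the structural fact about tropical segments and verify it applies to both transition points at $u_n \pm 1$, ensuring that no tropical segment can absorb either transition into its interior.
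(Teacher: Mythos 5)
Your proof is correct, and its skeleton is the same as the paper's: induction on $n$, the identity $u_n-1=2u_{n-1}$, the two segment-preserving maps that identify $\cptrop_n$ on $[0,u_n-1]$ and on $[u_n+1,+\infty[$ with (lifted/shifted) copies of $\cptrop_{n-1}$, and the additivity $\gamma_n([0,\lambda])=\gamma_n([0,u_n-1])+1+\gamma_{n-1}([0,\lambda-(u_n+1)])$. Where you genuinely diverge is in justifying that the junctions at $u_n\pm 1$ are mandatory breakpoints (and that the middle piece is one tropical segment): the paper verifies the middle piece and the non-mergeability of consecutive pieces by looking at the direction vectors $-e^K$ of the ordinary segments and invoking Lemma~5 of~\cite{ABGJ18}, whereas you give a self-contained argument — the middle piece is checked directly to be a tropical segment from its parametrization, and the breakpoints are forced by the observation that along a monotone tropical segment each coordinate decreases and then stays constant, so a constant-then-strictly-decreasing pattern in coordinate $n-1$ (constant $0$ on $[0,u_n-1]$, constant $-1$ on $[u_n,u_n+1]$) cannot occur inside a single segment. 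The paper's route gets the merging criterion for free from the earlier work and is shorter to state; yours avoids the external lemma at the cost of needing two small points to be made explicit: (i) in a decomposition, each tropical segment is a sub-arc of the (monotone, by \Cref{lemma:cptrop_monoticity}) path, with endpoints $\cptrop_n(\lambda_1)\geq\cptrop_n(\lambda_2)$, so your parametrized description applies with the correct orientation; and (ii) the path is non-constant on the intervals where coordinate $n-1$ is constant (coordinate $n$ equals $-\lambda$ there), which is what makes the transition genuinely impossible within one segment — also, "constant on an \emph{initial} sub-arc" should be relaxed to "constant on a nondegenerate sub-arc", since a putative segment straddling $u_n+1$ may start before $u_n$; your decrease-then-constant structural fact covers this as stated. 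With these points spelled out, the argument is complete and the final bookkeeping ($\bar\mu=2u_{n-1}$, $2u_{n-1}-1$, $+\infty$) matches the paper's.
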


\begin{proof}
We show the statement by induction on $n \geq 1$. To this purpose, we denote by $\gamma_n(\cdot)$ the quantity $\gamma(\cdot)$ associated with the tropical central path $\cptrop_n$ of~\eqref{eq:cex}. 

We note that the relations above are trivially verified for $n = 1$. We now suppose that they holds for $n-1$, where $n > 1$. 

By \Cref{lemma:x_n} and \Cref{prop:trop_cp}, we now that $\cptrop_n([0,u_n - 1])$ is precisely the image of $\cptrop_{n-1}([0,u_n - 1])$ by the map $\phi \colon z \mapsto \begin{bsmallmatrix} z_{[n-2]} \\ 0 \\ z_{n-1} \end{bsmallmatrix}$. Besides, given $z, z' \in \T^{n-1}$, we have $\phi(\tsegm(z, z')) = \tsegm(\phi(z), \phi(z'))$. We deduce that $\gamma_n([0,u_n-1]) = \gamma_{n-1}([0,u_n - 1])$. Since $u_n - 1 = 2 u_{n-1}$, we deduce by the induction hypothesis that $\gamma_n([0,u_n-1]) = 2^{n-1} - 1$.

Similarly, if $\lambda \geq u_n + 1$, then $\cptrop_n([u_n+1,\lambda])$ is precisely the image of $\cptrop_{n-1}([0,\lambda - (u_n + 1)])$ by the map $\psi : z \mapsto \begin{bmatrix} 
-(u_{n-1} + 1)e \\
-1 \\
-(u_n + 1) 
\end{bmatrix}
+
\begin{bmatrix} 
z \\
z_{n-1}
\end{bmatrix}$, thanks to Lemma~\ref{lemma:x_n} and \Cref{prop:trop_cp}. Since $\psi(\tsegm(z, z')) = \tsegm(\psi(z), \psi(z'))$ or all $z, z' \in \T^{n-1}$, we have $\gamma_n([u_n+1, \lambda]) = \gamma_{n-1}([0, \lambda - (u_n + 1)])$. 

Finally, $\cptrop_n([u_n - 1, u_n])$ and $\cptrop_n([u_n, u_n+1])$ are two ordinary segments directed along $-e^{[n]}$ and $-e^n$. As a consequence of \cite[Lemma 5]{ABGJ18}, $\cptrop_n([u_n - 1, u_n + 1])$ is made of one tropical segment.

Note that the last ordinary segment in $\cptrop_n([0,u_n - 1])$ is directed by a vector of the form $-e^K$ where $K \subset [n]$ and $n-1 \notin K$. Since the first ordinary segment of $\cptrop_n([u_n - 1, u_n + 1])$ is directed by $-e^{[n]}$, and $[n] \subset K$, we deduce from \cite[Lemma 5]{ABGJ18} that $\gamma_n([0,u_n + 1]) = \gamma_n([0,u_n-1]) + 1$. Similarly, if $\lambda > u_n + 1$, the first ordinary segment in $\cptrop_n([u_n+1,\lambda])$ is directed along a vector of the form $-e^L$ where $L \subset [n]$ contains $n-1$ (indeed, $(\cptrop_n(\lambda'))_{n-1} = u_n + 1 - \lambda'$ for all $u_n + 1 < \lambda' \leq \lambda$ by \Cref{lemma:x_n} and \Cref{prop:trop_cp}). Since the last ordinary segment of $\cptrop_n([0,u_n + 1])$ is directed along $-e^n$, we conclude that $
\gamma_n([0, \lambda]) = \gamma_n([0,u_n-1]) + 1 + \gamma_n([u_n+1, \lambda])$. Equivalently, 
\[
\gamma_n([0, \lambda]) = 2^{n-1} + \gamma_{n-1}([0, \lambda - (u_n + 1)]) \, .
\]
Thus, $\gamma_n([0,2u_n - 1]) = 2^{n-1} + \gamma_{n-1}([0, u_n - 2]) = 2^{n} - 2$, thanks to the induction hypothesis and the relation $u_n = 2u_{n-1} + 1$. Similarly, $\gamma_n([0, 2 u_n]) = 2^{n-1} + \gamma_{n-1}([0, u_n - 1]) = 2^n - 1$, and $\gamma_n([0, +\infty[) = 2^{n-1} + \gamma_{n-1}([0, +\infty[) = 2^n - 1$.
\end{proof}

We are now ready to prove \Cref{th:cex_intro}, that we restate in a more precise way as follows:
\begin{theorem}\label{th:cex_lower_bound}
Let $0 < \multlow < 1 < \multupp$. Then, if $t \gg 1$, any interior point method whose trajectory is contained in the neighborhood $\Mcal_{\multlow, \multupp, t}(\R_{> 0})$ of the central path of~\eqref{eq:cex} requires at least $2^n - 1$ iterations to reduce the value of the objective function from $\Omega(1)$ to $1/t^{2u_n}$.  
\end{theorem}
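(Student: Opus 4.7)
The approach is to combine the general lower bound of Theorem~\ref{th:complexity2} with the explicit count $\gamma([0, 2u_n]) = 2^n - 1$ from Theorem~\ref{th:cex_gamma}. Lemma~\ref{lemma:isval} (Appendix~\ref{app:cube}) ensures Assumption~\ref{assmpt:assmpt_K_c} holds for $\cex_n(t)$, giving $\Ktrop = \tcex_n$ and $\inpT{\ctrop}{x} = x_n$. The inequality $\bigvee_j x_j \leq 0$ appearing in $\tcex_n$, together with $0 \in \Ktrop$, yields $\sup_{x \in \Ktrop} x_n = 0$, so $\lambda^* = 0$; the hypothesis of Theorem~\ref{th:complexity2} then specializes to the neighborhood $\Mcal_{\multlow, \multupp, t}([1, +\infty))$.

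The main preparatory step is to convert the broad hypothesis (trajectory in $\Mcal_{\multlow, \multupp, t}(\R_{>0})$) into this narrower one. I would set $\ubar{v}_0 := \multlow/\exp(\Gamma + \delta^*)$, a positive constant independent of $t$. For any point $x \in \Mcal_{\multlow, \multupp, t}(\eta)$ with $\inp{c_t}{x} \leq \ubar{v}_0$, the multiplicative bound gives $\inp{c_t}{\cpt(\eta)} \leq \inp{c_t}{x}/\multlow \leq 1/\exp(\Gamma + \delta^*)$, which by Proposition~\ref{prop:value_central_path} does not exceed $\inp{c_t}{\cpt(1)}$; the strict monotonicity of $\eta \mapsto \inp{c_t}{\cpt(\eta)}$ then forces $\eta \geq 1$. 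Hence if $V_0 := \inp{c_t}{x^0} \leq \ubar{v}_0$, the whole trajectory already lies in $\Mcal_{\multlow, \multupp, t}([1, +\infty))$. Otherwise, letting $k_0 \geq 1$ be the smallest index with $\inp{c_t}{x^{k_0}} \leq \ubar{v}_0$, I would split the segment $[x^{k_0-1}, x^{k_0}]$ at the point $y$ satisfying $\inp{c_t}{y} = \ubar{v}_0$: the modified sub-trajectory $y, x^{k_0}, \dots, x^N$ consists of $N - k_0 + 1$ segments, lies in $\Mcal_{\multlow, \multupp, t}([1, +\infty))$, starts at value $\ubar{v}_0$, and ends at value at most $1/t^{2u_n}$.

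Applying Theorem~\ref{th:complexity2} to this (sub-)trajectory then produces a lower bound of $\gamma([-\log_t \tilde v, 2u_n])$ on its number of segments, where $\tilde v \in \{V_0, \ubar{v}_0\}$ is the starting value. From Proposition~\ref{prop:trop_cp}, an easy induction on $n$ gives $\cptrop_n(\lambda) = (0, \dots, 0, -\lambda)$ for all $\lambda \in [0, 1]$ and $n \geq 1$; the structural analysis in the proof of Theorem~\ref{th:cex_gamma} further shows that this piece is exactly the first tropical segment of $\cptrop_n$ (so it cannot be absorbed into any longer tropical segment), hence $\gamma([\lambda_0, 2u_n]) = \gamma([0, 2u_n]) = 2^n - 1$ for every $\lambda_0 \in [0, 1)$. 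Since $\tilde v$ is a fixed positive constant, the inequality $-\log_t \tilde v < 1$ holds eventually as $t$ grows, giving $\gamma([-\log_t \tilde v, 2u_n]) = 2^n - 1$. In the non-split case this immediately gives $N \geq 2^n - 1$; in the split case, $N - k_0 + 1 \geq 2^n - 1$ combined with $k_0 \geq 1$ yields the same. The main technical subtlety—the mismatch between the hypothesis $\eta > 0$ here and the requirement $\eta \geq t^{\lambda^*}$ in Theorem~\ref{th:complexity2}—is resolved precisely by the $\ubar{v}_0$-truncation argument above.
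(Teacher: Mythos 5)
Your proposal is correct and takes essentially the same route as the paper: identify $\lambda^* = 0$, apply \Cref{th:complexity2} with $\bar v = t^{-2u_n}$ and $\ubar v = \Omega(1)$, observe that $\gamma([\lambda_0, 2u_n]) = \gamma([0, 2u_n])$ for $\lambda_0 < 1$, and conclude with \Cref{th:cex_gamma}. Your additional truncation step (reducing the hypothesis from $\Mcal_{\multlow,\multupp,t}(\R_{>0})$ to $\Mcal_{\multlow,\multupp,t}([1,+\infty[)$ via the constant $\multlow/\exp(\Gamma+\delta^*)$ and \Cref{prop:value_central_path}) is a legitimate patch of a mismatch the paper passes over silently; just note that it tacitly assumes the objective value never rises back above that constant at later iterates, which can be arranged by splitting at the last, rather than the first, crossing.
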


\begin{proof}
We note that the supremum $\lambda^*$ of $x_n$ for $x$ satisfying~\eqref{eq:tcex} is equal to $0$. Thus, we can apply \Cref{th:complexity2} with $\ubar v = \Omega(1)$ and $\bar v = 1/t^{2u_n}$ to the sequence of segments corresponding to the trajectory followed by the interior point method. This proves that there is a least $\gamma([\log_t \ubar v, 2u_n])$ segments, and subsequently, iterations. We can easily see from \Cref{prop:trop_cp} that the latter quantity is equal to $\gamma([0, 2u_n])$, provided that $\log_t \ubar v < 1$, which can be assumed up to taking $t$ slightly larger. We conclude by \Cref{th:cex_gamma}.
\end{proof}

\begin{remark}
The exponential lower bound of \Cref{th:cex_lower_bound} is robust to changes on the initial and final values $\Omega(1)$ and $1/t^{2u_n}$ of the objective function. For instance, we have chosen the target value $1/t^{2 u_n}$ because this corresponds to the $1/2^{2L}$ threshold (where $L$ is the bitsize of the LP) used in linear programming after which the rounding method is applied to find an exact optimal solution. However, replacing $1/t^{2 u_n}$ by its square root $1/t^{u_n}$ only halves the lower bound (see \Cref{th:cex_gamma}).
\end{remark}

We conclude this section by some comments on the linear program~\eqref{eq:cex}.
We are guided by the structure of the tropical central path, which is reminiscent of that of the simplex path on the Klee--Minty cubes. More precisely, the first part of the tropical central path satisfies $x_{n-1} = 0$ and the other coordinates are negative. In terms of the linear program~\eqref{eq:cex}, this corresponds to the fact that the central path is close to the facet $\sum_{j = 1}^n x_j = 1$. In the last part of the tropical central path, we have $x_{n-1} = u_n + x_n$. This corresponds to a central path close to the facet $x_{n-1} = t^{u_n} x_n$.
\Cref{lemma:pair} shows that these faces are disjoint,
whereas~\Cref{lemma:pair2} shows that the inequalities in~\eqref{eq:cex} are paired, like in a cube.
In fact, we believe that the feasible set of~\eqref{eq:cex} is combinatorially equivalent to a $n$-cube. This is supported by the computation of the face lattice of the feasible set for $n \leq 5$. We leave the proof of this for a further work.

\section{Conclusion}

The combinatorics of the feasible set of~\eqref{eq:cex} is yet to be fully studied, in particular, the correspondence between the vertices and the $2^n$ extremities of the tropical segments in the tropical central path.

While we have used tropical geometry to construct complexity lower bounds to existing methods, we believe that it could be also a guide to develop new algorithms, that resist to such obstructions. In particular, it raises the question of identifying a trajectory to the optimal solution that cannot degenerate to the boundary of the tropical feasible set, \ie, that remains ``central'' in the tropical sense. 

\paragraph{Acknowledgments.} This work was partially done when the first author was visiting the Hausdorff Research Institute for Mathematics (HIM) during the ``Discrete Optimization'' trimester in Fall 2021. He thanks HIM for the support. He warmly thanks the organizers of the trimester. He specially thanks Daniel Dadush, Georg Loho, Bento Natura and L{\'a}szl{\'o} Végh for the stimulating discussions on this work. 

\bibliographystyle{alpha}

\appendix

\section{Proofs and auxiliary statements}
\label{app:proofs}

\begin{proof}[\proofname{} of~\Cref{cor:log_bound}]
Let $x,y \in D_f$. We define the function
\[
\phi(s) \coloneqq f(x + s(y-x)) \, .
\]
It is twice differentiable, and computing its derivatives yields $\phi'(s) = \inp{g(x+s(y-x))}{y-x}$ and $\phi''(s) = {(y-x)}^{\intercal} H(x+s(y-x)) (y-x)$. Using \cref{thm:hessian_ineq_lb}, we get
\[
\phi''(s) \geq \inp{(y-x)}{\frac{y-x}{n{\big(x+s(y-x)\big)}^2}} \, .
\]
Integrating this inequality gives $\phi'(s) \geq \inp{y-x}{\frac{1}{nx} - \frac{1}{n(x + s(y-x))}} + \phi'(0)$, and integrating once more yields
\[
\phi(1) - \phi(0) \geq \inp{y-x}{g(x) + \frac{1}{nx}} + \frac{1}{n} \sum_i \big( \log x_i - \log y_i \big) \, , 
\]
which is the desired result.
\end{proof}

\begin{lemma}
\label{lemma:component_ineq}
Let $x,y \in \R^n$. Suppose there are $a, b > 0$ such that $x \leq y + a$ and $\sum_i x_i + b \geq \sum_i y_i$.  Then $\distInf(x;y) \leq \max((n-1) a + b,a)\leq na+b$.
\end{lemma}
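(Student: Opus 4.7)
The plan is to exploit the two given inequalities component by component, using one of them to bound $x_i - y_i$ from above and combining them to bound $y_i - x_i$ from above.

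First I would note that the hypothesis $x \leq y + a$ (understood entrywise, with $a$ scalar) directly yields $x_i - y_i \leq a$ for every $i \in [n]$. To get a bound in the other direction, I would fix an index $i$ and write
\[
y_i - x_i \;=\; \sum_{j=1}^n (y_j - x_j) \;-\; \sum_{j \neq i} (y_j - x_j) \;\leq\; b + \sum_{j \neq i} (x_j - y_j),
\]
where the right-hand side uses the second hypothesis $\sum_j (y_j - x_j) \leq b$. Then the already-established bound $x_j - y_j \leq a$ for each $j \neq i$ gives $y_i - x_i \leq b + (n-1)a$.

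Combining the two directions yields $|x_i - y_i| \leq \max(a,\, (n-1)a + b)$ for every $i$, hence $\distInf(x,y) \leq \max(a, (n-1)a + b)$, which is the first claimed inequality. The final inequality $\max(a, (n-1)a + b) \leq na + b$ is immediate since $a \leq na + b$ (as $n \geq 1$ and $b > 0$) and $(n-1)a + b \leq na + b$.

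There is no genuine obstacle here: the statement is an elementary arithmetic manipulation, and the only thing worth highlighting is the trick of isolating the $i$-th coordinate in the sum to convert a global (sum) bound into a coordinatewise one, at the cost of accumulating $(n-1)$ copies of the one-sided bound $a$.
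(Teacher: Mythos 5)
Your proof is correct and follows essentially the same route as the paper's: both derive the upper bound $x_i-y_i\leq a$ directly from the entrywise hypothesis and then use the sum constraint together with $(n-1)$ copies of that bound to control $y_i-x_i$ (the paper phrases this via $\min_i(x_i-y_i)$ rather than isolating a fixed coordinate, but the computation is identical). Nothing further is needed.
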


\begin{proof}[\proofname{} of \Cref{lemma:component_ineq}]
  Since $x\leq y +a$, we have $\max_i (x_i-y_i)\leq a$. 
  Moreover, $-b\leq \sum_i (x_i-y_i)\leq (n-1)\max_i (x_i-y_i)
  + \min_i (x_i-y_i)$, which entails
  that $\min_i(x_i-y_i)\geq -(n-1)a-b$.
  It follows that $\|x-y\|_\infty
  = \max(\max_i (x_i-y_i),-\min_i (x_i-y_i))
  \leq \max(a,(n-1)a+b)$.
\end{proof}

The proof of \cref{prop:non_archimedean_pointwise} relies on tropical properties of $\Ktrop$ to deduce metric properties on the family of convex sets $(\Kt)_t$. We start with a tropical perturbation lemma. We define first the metric $d_1$ on $\T^n$. The support of a point $u$ is the set $\supp(u) \coloneqq \{ i \colon u_i \neq -\infty \}$. Then $d_1(u,v)$ is set to be $+\infty$ if $\supp(u) \neq \supp(v)$, and otherwise
\[
d_1(u,v) \coloneqq \sum_{i \in \supp(u)} \abs{u_i - v_i}
\] 
We define $B_1(u;r)$ to be the open ball of center $u$ and radius $r$ for the $d_1$ distance.

For all $\eps > 0$, we define $\Ktrop^\eps$ the \textit{$\eps$-erosion} of $\Ktrop$ to be the set
\[
\Ktrop^\eps \coloneqq \{ u \in \Ktrop \colon B_1(u;\eps) \subset \Ktrop \} \, .
\]
We define \textit{$\linr(\Ktrop)$ the lower inner radius} of $\Ktrop$ to be the supremum of the radii for which the $\eps$-erosion $\Ktrop^\eps$ is not empty:
\[
\linr(\Ktrop) \coloneqq \sup\{ \eps \geq 0 \colon \Ktrop^\eps \neq \emptyset \} \, .
\]
Since $\Ktrop$ is supposed to be regular, then its interior is not empty, and $\linr(\Ktrop) > 0$.

\begin{lemma}
	\label{lemma:perturbation_lemma}
	There is $C_{\Ktrop} \geq 0$ depending only on $\Ktrop$ such that for all $\eps \in [0,\linr(\Ktrop)[$, we have $\distInf(\Ktrop, \Ktrop^\eps) \leq C_{\Ktrop} \eps$.
\end{lemma}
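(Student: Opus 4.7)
The plan is to exploit the semilinear structure of $\Ktrop\cap\R^n$ granted by~\Cref{assmpt:assmpt_K_c}\ref{item:iii} and combine it with Hoffman's error bound for systems of linear inequalities. Since both $d_1$ and $d_\infty$ are infinite between points of distinct supports, the $\eps$-erosion $\Ktrop^\eps$ decomposes along the support stratification, so it suffices to work stratum by stratum. For each $S \subseteq [n]$, set $\Ktrop_S := \Ktrop \cap \{u\in\T^n : \supp(u)=S\}$, which is naturally identified with a semilinear subset of $\R^S$ (a face-at-infinity computation that is standard for semilinear sets). I would bound the one-sided Hausdorff distance from $\Ktrop_S$ to its $d_1$-erosion by $C_S\,\eps$ in each stratum, and then take $C_{\Ktrop} := \max_S C_S$.

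Within a single stratum, write $\Ktrop_S = \bigcup_k P^S_k$ as a finite union of polyhedra $P^S_k = \{x \in \R^S : A^S_k x \leq b^S_k\}$; by the regularity assumption~\ref{item:ii}, after refinement I may restrict attention to the full-dimensional pieces that actually cover $\Ktrop_S$. The key observation is that the $d_1$-erosion of $P = \{x : Ax \leq b\}$ of radius $\eps$ is again polyhedral, with an explicit description: because the norm dual to $\ell_1$ is $\ell_\infty$, this erosion coincides with $\{x : Ax \leq b - \eps N\}$, where $N_i := \|a_i\|_\infty$ collects the $\ell_\infty$-norms of the rows of $A$. In particular, any point of this tightened polyhedron automatically belongs to $\Ktrop_S^\eps\subset \Ktrop^\eps$.

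Now apply Hoffman's lemma to the tightened system: for any $u \in P^S_k$, the residual vector $(A^S_k u - (b^S_k - \eps N^S_k))_+$ is bounded componentwise by $\eps N^S_k$ (since $u$ already satisfies the original inequalities), so Hoffman produces $v$ with $A^S_k v \leq b^S_k - \eps N^S_k$ and $\|u-v\|_\infty \leq H(A^S_k) \cdot \eps \cdot \max_i \|a^S_{k,i}\|_\infty$, where $H(A^S_k)$ is the Hoffman constant of $A^S_k$. This yields $d_\infty(u,\Ktrop^\eps) \leq C_{k,S}\,\eps$ with $C_{k,S} := H(A^S_k)\max_i\|a^S_{k,i}\|_\infty$ depending only on the matrix $A^S_k$. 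Taking a maximum over the finitely many $(k,S)$ gives $C_\Ktrop$, with exactly the announced explicit dependence on the facet normals of the polyhedra describing $\Ktrop$.

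The main obstacle will be handling the strata ``at infinity'', i.e., $S \subsetneq [n]$: one must check that each $\Ktrop_S$ is itself semilinear in $\R^S$, and, more importantly, that its full-dimensional polyhedral pieces admit a nonempty $\eps$-erosion so that Hoffman's bound is not vacuous. The hypothesis $\eps < \linr(\Ktrop)$ together with the regularity of $\Ktrop$ is precisely what secures the latter, since the strata whose polyhedral pieces do not collapse under erosion are exactly those supporting points of the interior of $\Ktrop$ used to witness $\linr(\Ktrop) > 0$.
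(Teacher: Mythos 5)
Your proposal is correct in substance and, at its core, follows the same route as the paper: decompose $\Ktrop$ into finitely many polyhedra, observe that the $d_1$-erosion of a polyhedron $\{x\colon Ax\leq b\}$ is the same polyhedron with right-hand side tightened by $\eps$ times a row-dependent factor, bound the $\distInf$-distance from any feasible point to this tightened polyhedron by a constant times $\eps$, and take the maximum of the per-piece constants. The only real difference is the engine in the middle step: the paper carries out the LP-duality argument by hand (the nearest-point LP and its dual, optimized over the vertices of $\{w\geq 0,\ \norm{\transpose{A}w}_1\leq 1\}$), whereas you invoke Hoffman's error bound as a black box; these are the same estimate—the paper's computation is essentially a proof of the Hoffman bound for this instance—so you gain brevity while the paper gets a more explicit constant (incidentally, your tightening factor $\norm{a_i}_\infty$ per row is the correct one; the paper's $\inf_j \abs{A_{ij}}$ reads like a typo). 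Your support stratification is a sensible refinement for handling points with $-\infty$ entries, which the paper's proof silently sidesteps by working with polyhedra in $\R^n$. One caution: your claim that $\eps<\linr(\Ktrop)$ together with regularity guarantees a nonempty $\eps$-erosion of every full-dimensional piece is not justified as stated—$\linr(\Ktrop)$ is a global quantity, and a piece of a decomposition can be much thinner than $\Ktrop$ itself, making the tightened system infeasible for some $\eps<\linr(\Ktrop)$. You are not worse off than the paper here, since its union step $\distInf(x^*,\Ktrop^\eps)\leq\distInf(x^*,\Pcal_i^\eps)$ tacitly requires $\Pcal_i^\eps\neq\emptyset$ as well, but if you want a complete argument you need either to choose the polyhedral pieces so that their own lower inner radii are controlled, or to absorb into $C_{\Ktrop}$ the finitely many thresholds of $\eps$ at which a given piece's erosion becomes empty, rather than attributing this to regularity alone.
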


\begin{proof}
	Let us assume first that $\Ktrop$ is a polyhedra of the form $\Ktrop = \{ u \in \R \colon A u \leq b \}$ with $A \in \R^{m \times n}$ and $b \in \R^m$. Then 
	\[
	u \in \Ktrop^\eps \Leftrightarrow \forall j, u \pm \eps \e_j \in \Ktrop \Leftrightarrow \forall j, Au \leq b \pm \eps A_{\cdot j} \, .
	\]
	where $A_{\cdot j}$ denotes the $j^\textnormal{th}$ column of $A$. Therefore $\Ktrop^\eps = \{ u \colon Au \leq b_\eps \}$ where $b-b_\eps = \eps a$ with $a$ the positive vector defined by $a_i = \inf_j \abs{A_{ij}}$.
	
	Take then $u \in \Ktrop$. Consider the linear program
	\[
	\textnormal{minimize } \distInf(u, v) \textnormal{ subject to } v \in \Ktrop^\eps \, .
	\]
	Its dual can be written as
	\[
	\textnormal{maximize } w (Au - b_\eps) \textnormal{ subject to } w \geq 0 \textnormal{ and } \norm{\transpose{A}w}_1 \leq 1 \, .
	\]
	Since $\eps < \linr(\Ktrop)$, then the primal program is feasible. It is then also bounded. Therefore by strong duality, the dual is bounded as well and they have the same value. Its maximum is attained at one of the vertices of the polyhedron $\{w \geq 0,\norm{\transpose{A}w}_1 \leq 1\}$. We define $V$ to be this finite set of vectors with positive coordinates. Since $v$ also has positive coordinates, we have the following inequalities:
	\[
	\distInf(u, K^\eps) = \max_{w \in V} w(Au-b_\eps) \leq \max_{w \in V} w(b-b_\eps) = \eps \max_{w \in V} \inp{w}{a} \, .
	\]
	This yields the result by taking $C_{\Ktrop} \coloneqq \max_{w \in V} \inp{w}{a} \geq 0$.

Suppose now that $\Ktrop$ is only semilinear, meaning it is the finite union of polyhedra $\cup_i \Pcal_i$. Observe that for since $\Ktrop^\eps\subset \Ktrop$, $\distInf(\Ktrop,\Ktrop^\eps)=\sup_{x\in \Ktrop} \distInf(x,\Ktrop^\eps)$. By a compactness argument, the supremum is achieved, so 
        $\distInf(\Ktrop,\Ktrop^\eps)=\distInf(x^*,\Ktrop^\eps)$
        for some $x^*\in\Ktrop$. We have $x^*\in \Pcal_i$ from some $i$.
        Noting that $\Ktrop^\eps\supset \Pcal_i^\eps$, 
        and using the fact that the function $Y\mapsto \distInf(x^*,Y)$ is nonincreasing with respect to set inclusion, deduce that $\distInf(x^*,\Ktrop^\eps)
        \leq \distInf(x^*,\Pcal_i^\eps) \leq \distInf(\Pcal_i,\Pcal_i^\eps)
        \leq \eps C_{\Pcal_i}$.
        It follows that
        \[
	\distInf(\Ktrop, \Ktrop^\eps) \leq \eps C_{\Ktrop} \text{ with }C_{\Ktrop}:=\max_{i} C_{\Pcal_i} \enspace .
        \]
\end{proof}

\begin{proposition}
\label{prop:non_archimedean_pointwise}
Let $t > 1$ and $r > 0$. Let $x \in \Kt\cap (\R_{>0})^n$ such that $B_\infty(\log_t x; \frac{1}{2} \log_t n! + n \delta_\K(t) + r) \subset \Ktrop$. For all $y \in \R_{>0}^n$, if $\distInf(\log_t x;\log_t y) \leq r$, then $y \in \Kt$.
\end{proposition}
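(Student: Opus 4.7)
The aim is to realize $y$ as a convex combination of points already known to belong to $\Kt$. Set $a := \log_t y - \log_t x \in [-r,r]^n$ and fix a permutation $\sigma$ sorting its entries $a_{\sigma(1)} \leq \cdots \leq a_{\sigma(n)}$. For a parameter $b > 0$ (to be chosen below so that $b + \delta_\K(t) \leq R := \frac{1}{2}\log_t n! + n\delta_\K(t) + r$), introduce the $n{+}1$ ``staircase'' vectors $u^0,\dots,u^n \in \{-b,+b\}^n$ interpolating from $u^0 = be$ to $u^n = -be$ by flipping the coordinates $\sigma(1),\dots,\sigma(n)$ one by one. A direct coordinatewise computation, which amounts to solving a triangular linear system, produces nonnegative weights $\beta_0,\dots,\beta_n$ summing to one such that $y = \sum_{k=0}^n \beta_k\,(x \cdot t^{u^k})$ holds componentwise; these are the classical weights of the order-polytope decomposition of $a$ reinterpreted in multiplicative coordinates.

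Next, I would exploit the hypothesis that $B_\infty(\log_t x; R) \subset \Ktrop$ together with the Hausdorff bound $\distInf(\log_t \Kt, \Ktrop) \leq \delta_\K(t)$. Provided $b + \delta_\K(t) \leq R$, each ``ideal vertex'' $\log_t x + u^k$ lies in $\Ktrop$, so there exists $w^k \in \Kt$ with $\distInf(\log_t w^k, \log_t x + u^k) \leq \delta_\K(t)$. By convexity of $\Kt$, the point $\hat y := \sum_k \beta_k w^k$ lies in $\Kt$, and bookkeeping gives $t^{-\delta_\K(t)} y \leq \hat y \leq t^{\delta_\K(t)} y$ componentwise. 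Thus $\hat y$ is only a multiplicative $O(t^{\delta_\K(t)})$-approximation of $y$, not $y$ itself.

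The main obstacle is to upgrade this approximation to the equality $y \in \Kt$, and this is precisely where the budget $R = \tfrac12\log_t n! + n\delta_\K(t) + r$ is spent. The share $n\delta_\K(t)$ will be absorbed by iterating (or refining) the lift: at each of $n$ stages one ``freezes'' one more coordinate of $y$ and uses a fresh Hausdorff approximation in the remaining directions, each stage costing $\delta_\K(t)$ of slack. The share $\tfrac12 \log_t n!$ plays the role of an AM--GM (Jensen) budget: replacing the single sorted decomposition above by its symmetrization over all $n!$ permutations of the sorting order, the gap between the symmetrized arithmetic average of the lifts and the target $y$ is of order $\tfrac12 \log n!$ in log scale, exactly matching the remaining budget. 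I expect this symmetrization-plus-AM--GM step, together with the coordinatewise iteration, to be the main technical hurdle. Once the two errors are absorbed, the resulting point is exactly $y$, and closedness and convexity of $\Kt$ finish the proof.
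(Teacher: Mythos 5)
There is a genuine gap, and it sits exactly at the step you yourself flag as "the main technical hurdle." Your first two steps are sound: the staircase (order-simplex) decomposition does write $y$ exactly as a convex combination of the ideal points $x\cdot t^{u^k}$, and lifting each ideal vertex into $\Kt$ via the Hausdorff bound and taking the same convex combination produces a point $\hat y\in\Kt$ with $t^{-\delta_\K(t)}y\leq \hat y\leq t^{\delta_\K(t)}y$. But the final step — upgrading this multiplicative approximation to the exact membership $y\in\Kt$ — is not an argument, only a hope. The error $t^{\pm\delta_\K(t)}$ is a \emph{fixed} quantity for fixed $t$: re-lifting "coordinate by coordinate" injects a fresh Hausdorff error of the same size at every stage, and no limit can be taken inside a fixed $\Kt$ because $\delta_\K(t)$ does not shrink along your iteration. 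Likewise, averaging the construction over all $n!$ sorting orders changes which point of $\Kt$ you obtain, but there is no identity forcing the symmetrized point to equal $y$; "AM--GM with budget $\tfrac12\log_t n!$" is not a mechanism that cancels the perturbation of the vertices. What is really needed is a \emph{robust containment} statement: the convex hull (or cone) of the lifted vertices must be shown to still contain $y$ after each vertex has been moved multiplicatively by $t^{\pm\delta_\K(t)}$, and your staircase simplex is exactly the configuration for which $y$ can sit on or arbitrarily close to a face (e.g.\ when several weights $\beta_k$ vanish), so containment is not stable under such perturbations without a quantitative argument you have not supplied.

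The paper resolves this by choosing the auxiliary points differently and proving robustness by hand. It homogenizes ($\tilde x=(1,x)$, cone over $\Kt$), takes the $n+1$ points $\log_t\tilde x+\eps\,\onevector_i$ with the \emph{full} budget $\eps=\tfrac12\log_t n!+n\delta_\K(t)+r$ pushed into a single coordinate direction each, lifts them to $x^i\in\tilde\Kcal_t$, and then shows that $(1,y)$ lies in the simplicial cone generated by $x^1,\dots,x^{n+1}$ by checking the sign of the relevant $(n+1)\times(n+1)$ determinants: in the Leibniz expansion the identity permutation term carries a factor $t^{n\eps-n\delta_\K(t)}$ and dominates the $n!$ off-diagonal terms, each bounded by $t^{(n-2)\eps+n\delta_\K(t)}$ (times $t^{r}$ when $y$ replaces a column). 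This is where $\tfrac12\log_t n!$ is spent (beating the $n!$ terms), where $n\delta_\K(t)$ is spent (absorbing the lift errors inside each product), and where $r$ is spent (the displacement of $y$); membership of $(1,y)$ in the cone then gives $y\in\Kt$ by ordinary convexity. Your proposal would need an analogue of this diagonal-dominance (or a separating-hyperplane stability) argument for your staircase vertices — and as written it is absent, so the proof does not go through.
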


\begin{proof}

  It will be convenient to work with homogeneous coordinates.
  So, we associate to the convex set $\Kt\cap (\R_{>0})^n$
  the convex cone $\tilde{\K}_t\subset (\R_{>0})^{n+1}$ generated
  by the vectors of the form $\tilde{x}:=(1,x)$ with $x\in \Kt\cap (\R_{>0})^n$.
  Similarly, we associate to the tropical convex set $\Ktrop$
  the tropical convex cone $\tilde{\Ktrop}$ of $\R^{n+1}$, generated
  by the vectors of the form $(0,x)$ where $x\in \Ktrop\cap \R^n$
  (we use the same notation for the classical
  and the tropical homegeneisation, since no ambiguity will arise).
  The sup-norm distance $d_\infty$ on $\R^n$ induces a
  projective distance $\tilde{d}_\infty$ on $\R^{n+1}$. Here, projective
  is understood in the tropical sense,
  meaning that the distance is invariant under the additive action of scalars,
  i.e., $\tilde{d}_\infty( \lambda \unitAM  + u,v)=\tilde{d}_\infty(u,v)$
  holds for all $u,v\in \R^{n+1}$ and $\lambda\in\R$.
  Denoting by $\tilde{B}_\infty(\cdot;\cdot)$ the balls with
  respect to this projective distance, 
  the hypothesis then reads
  $\tilde{B}_\infty(\log_t \tilde{x}; \eps) \subset \tilde{\Ktrop}$,
  with $\eps \coloneqq \frac{1}{2} \log_t n! + n \delta_\K(t) + r$.

  Denote $u^i \coloneqq \log_t \tilde{x} + \eps \onevector_i$ for $i \in [n+1]$. Since by hypothesis on $\log_t \tilde{x}$ these points belong to
  $\tilde{\Ktrop}$, then we know that there exists $x^i \in \tilde{\K}_t$ such that for all $i \in [n+1]$, $\distInf(\log_t x^i;u^i) \leq \delta_\K(t)$. In other words, the $x^i$ are such that for all $i,j \in [n+1]$,
	\[
	t^{-\delta_\K(t) + \delta_{ij} \eps} \tilde{x}_j \leq x^i_j \leq t^{\delta_\K(t) + \delta_{ij} \eps} \tilde{x}_j \, .
	\]
	We claim that for all $y \in \R^n$ such that $\distInf(\log_t y;\log_t x) \leq r$, the point $\tilde{y} = (1,y) \in \R^{n+1}$ belongs to the cone generated by the vectors $x^i$. This cone is defined by the following inequalities for $i \in [n+1]$:
	\[
	\det(x^1, \dots, x^{n+1}) \det(x^1, \dots, x^{i-1}, \tilde{y}, x^{i+1}, \dots, x^{n+1})	\geq 0 \, .
	\]
	We expand the first determinants as follows:
	\begin{equation}
		\label{eq:non_archimidean_pointwise_1}
		\det(x^1, \dots, x^{n+1}) = \sum_{\sigma \in S_{n+1}} \textnormal{sign}(\sigma) \prod_{j=1}^{n+1} x^j_{\sigma(j)} \, .
	\end{equation}
	Looking at the terms of the sum, since a permutation $\sigma \neq \textnormal{Id}$ has at most $(n-2)$ fixed points, we get
	\[
	\prod_{j=1}^{n+1} x^j_{\sigma(j)} \leq t^{n\delta_\K(t) + (n-2) \eps} \prod_{j=1}^{n+1} \tilde{x}_j  \, .
	\]
	For the term corresponding to $\sigma = \textnormal{Id}$, we have
	\[
	\prod_{j=1}^{n+1} x^j_j \geq t^{-n \delta_\K(t) + n \eps} \prod_{j=1}^{n+1} \tilde{x}_j \, .
	\]
	Since $\frac{1}{2} \log_t n! + n \delta_\K(t) \leq \eps$, then $n! t^{n \delta_\K(t) + (n-2) \eps} \leq t^{- n \delta_\K(t)+n\eps}$. This yields 
	\[
	\abs{\sum_{\sigma \neq \textnormal{Id}} \textnormal{sign}(\sigma) \prod_{j=1}^{n+1} x^j_{\sigma(j)}} \leq \sum_{\sigma \neq \textnormal{Id}} \prod_{j=1}^{n+1} x^j_{\sigma(j)} \leq n! t^{n \delta_\K(t) + (n-2) \eps} \prod_{j=1}^{n+1} \tilde{x}_j \leq t^{-n \delta_\K(t) + n \eps} \prod_{j=1}^{n+1} \tilde{x}_j \leq \prod_{j=1}^{n+1} x^j_j \, .
	\]
	As a result, in the sum defined in \cref{eq:non_archimidean_pointwise_1}, the term associated to the identity dominates the terms associated to the other permutations and is positive. The determinant is therefore positive.
	
	Likewise, for $i \in [n+1]$, developing the determinant
	\begin{equation}
		\label{eq:non_archimidean_pointwise_2}
		\det(x^1, \dots, x^{i-1}, \tilde{y}, x^{i+1}, \dots, x^{n+1}) = \sum_{\sigma \in S_{n+1}} \textnormal{sign}(\sigma) \tilde{y}_{\sigma(i)} \prod_{\substack{j = 1 \\ j \neq i}}^{n+1} x^j_{\sigma(j)} \, ,
	\end{equation}
	and using that $\distInf(\log_t y;\log_t x) \leq r$, we see that for $\sigma \neq \textnormal{Id}$ a permutation, we have
	\[
	\tilde{y}_{\sigma(i)} \prod_{\substack{j = 1 \\ j \neq i}}^{n+1} x^j_{\sigma(j)} \leq t^{r + (n-1)\delta_\K(t) + (n-2) \eps} \prod_{j=1}^{n+1} \tilde{x}_j
	\]
	and
	\[
	\tilde{y}_i \prod_{\substack{j = 1 \\ j \neq i}}^{n+1} \tilde{x}^j_i \geq t^{-r-(n-1) \delta_\K(t) + n \eps}  \prod_{j=1}^{n+1} \tilde{x}_j \, .
	\]
	Since $\frac{1}{2}n! + (n-1) \delta_\K(t) + r \leq \eps$, then the determinant is also positive.
	
	As a result, $\tilde{y} \in \textnormal{cone}({x^0, \dots, x^n}) \subset \tilde{\K}_t$, meaning that $y \in \Kt$.
\end{proof}

\begin{proof}[\proofname~(\cref{lemma:good_lift})]
	Consider $t_K > 1$ such that for $t \geq t_K$, $\eps(t) \coloneqq \delta_c(t) + (n+1) \delta_\K(t) + \log_t (K+2) + \frac{1}{2} \log_t n! + \log_t n \leq \linr(\Ktrop)$. Denote $\lambda = - \log_t \eta$, and $b^\lambda = \tbar \Ktrop_\lambda$. Following \cref{lemma:perturbation_lemma}, we know that there is $u \in \Ktrop$ such that $B_\infty(u;\eps(t)) \subset \Ktrop_\lambda$ and $\distInf(u; b^\lambda(t)) \leq C_{\Ktrop} \eps(t)$. 
	
	Consider $x \in \Kt$ such that $\distInf(\log_t x; u) \leq \delta_\K(t)$. Then using the relation~(\ref{eq:log_ineq}) and the linearity of the inner product, we have
	\[
	\log_t \inp{c_t}{x} \leq \log_t n + \delta_c(t) + \delta_\K(t) + \inpT{c}{u} \leq \log_t n + \delta_c(t) + \delta_\K(t) + \lambda - \eps(t) \leq \lambda \, ,
	\]
	meaning that $\inp{c_t}{x} \leq \frac{1}{\eta}$.
	
	Furthermore, for any $y \in \Kt$ such that $\inp{c_t}{y} \leq \frac{1}{\eta}$, using once more the relation~(\ref{eq:log_ineq}), we get
	\[
	\inpT{c}{\log_t y} \leq \delta_c(t) + \inpT{\log_t c_t}{\log_t y} \leq \delta_c(t) + \log_t \inp{c_t}{y} \leq \delta_c(t) + \lambda \, .
	\]
	Therefore $\log_t y \leq b^\lambda + \delta_c(t) \leq \log_t x + C_{\Ktrop} \eps(t) + \delta_\K(t) + \delta_c(t)$, meaning
	\[
	y \leq x t^{C_{\Ktrop} \eps(t)} t^{\delta_\K(t) + \delta_c(t)} \leq \alpha x  \, .
	\]
	where $\alpha \coloneqq \exp(C_{\Ktrop}\big((n+2)\delta^* + \log(K+2) + \frac{1}{2} \log(n!) + \log(n)\big) + 2\delta^*)$.
	
	Finally, denoting $a^i \coloneqq x + (K+1) x_i \onevector_i$, we have $\distInf(\log_t a^i; \log_t x) \leq \log_t(K+2)$. Since $B_\infty(\log_t x; \eps(t) - \delta_\K(t)) \subset \Ktrop$ and $\eps(t) - \delta(t) \geq \frac{1}{2} n! + n \delta(t) + \log_t(C+3)$, then using \cref{prop:non_archimedean_pointwise}, we have $a^i \in \Kt$. Likewise, denoting $b^i \coloneqq x - \frac{1}{2} x_i$, we have $\distInf(\log_t b_i; \log_t x) \leq \log_t 2 \leqslant \log_t(K+2)$, therefore $b^i \in \Kt$.
\end{proof}

\section{Interior point methods}\label{app:IPM}

We consider a convex optimization problem of the form~\eqref{eq:convex}, where $\Kcal$ is a closed convex set, and suppose that its optimal value, which we denote $\optval$, is finite. Given a self-concordant barrier $f$ over $\interior \Kcal$, we denote by $\cp(\eta)$ the point of the central path of parameter $\eta \geq 0$, \ie, the unique optimal solution of~\eqref{eq:penalized}. 
The following lemma justifies the interest of the central path by providing a bound of the value function along it. It shows that as $\eta \to +\infty$, the point of the central path $\cp(\eta)$ converges to a solution of~\eqref{eq:convex}. 
\begin{lemma}
\label{lemma:cp_val_ineq}
For all $\eta>0$, we have $\inp{c}{\cp(\eta)} \leq \optval + \frac{\thet}{\eta}$.
\end{lemma}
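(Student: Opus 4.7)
The plan is to combine the first-order optimality condition satisfied by $\cp(\eta)$ with a classical inequality from the theory of self-concordant barriers. I would first note that, since $\cp(\eta)$ is the (unique) minimizer over the open set $\interior \Kcal$ of the strictly convex function $x \mapsto \eta \inp{c}{x} + f(x)$, it must satisfy the stationarity equation $\eta c + g(\cp(\eta)) = 0$, or equivalently $g(\cp(\eta)) = -\eta c$.

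The second step is to invoke the standard ``dual-gradient'' bound for $\thet$-self-concordant barriers: for every $x \in \interior \Kcal$ and every $y \in \closure \Kcal$,
\[
\inp{g(x)}{y - x} \leq \thet \, .
\]
This is a classical consequence of the definition $\thet = \sup_x \norm{g_x(x)}_x^2$, treated in~\cite[\S~2.3.3]{Renegar01}, and is closely related to the Dikin-ball containment property~\cite[Theorem~2.3.4]{Renegar01} already exploited in the proof of~\Cref{thm:hessian_ineq_ub}. As a sanity check, for the $n$-self-concordant logarithmic barrier $f(x) = -\sum_i \log x_i$ on $\R_{>0}^n$, specializing to the boundary point $y = 0$ produces $\inp{g(x)}{y - x} = n$, showing that the bound is tight with $\thet = n$.

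The final step is a one-line substitution: plugging $g(\cp(\eta)) = -\eta c$ from the first step into the dual-gradient inequality applied at $x = \cp(\eta)$ yields, for every $y \in \Kcal$,
\[
-\eta \inp{c}{y - \cp(\eta)} \leq \thet \, ,
\]
which rearranges to $\inp{c}{\cp(\eta)} \leq \inp{c}{y} + \thet / \eta$. Taking the infimum of the right-hand side over $y \in \Kcal$ (which is finite by assumption on $\optval$) then gives the desired $\inp{c}{\cp(\eta)} \leq \optval + \thet/\eta$. No real obstacle is expected: the argument is essentially a single substitution, and the only non-trivial ingredient is the standard dual-gradient inequality. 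Should one wish to avoid citing it as a black box, one can derive it directly from the definition of $\thet$ by integrating the self-concordance inequality along the half-line from $x$ toward $y$, which is a routine computation.
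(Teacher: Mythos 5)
Your proposal is correct and follows essentially the same route as the paper: stationarity gives $g(\cp(\eta)) = -\eta c$, and the bound $\inp{g(x)}{y-x} \leq \thet$ from Renegar (the paper cites Theorem~2.3.3) applied at $x = \cp(\eta)$ yields the claim after rearranging. The only cosmetic difference is that you take an infimum over $y \in \Kcal$ while the paper plugs in an optimal solution $x^*$ directly; the content is identical.
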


\begin{proof}
We have $g(\cp(\eta)) = -\eta c$ by the optimality of $\cp(\eta)$. By~\cite[Theorem 2.3.3]{Renegar01}, we know that for all $x, y \in D_f$, $\inp{g(x)}{y-x} \leq \thet$. By applying this to $\cp(\eta)$ and an optimal solution $x^*$ of~\eqref{eq:convex}, we obtain:
\[
\inp{c}{\cp(\eta)} - \inp{c}{x^*} = \frac{1}{\eta} \inp{g(\cp(\eta))}{x^* - \cp(\eta)} \leq \frac{\thet}{\eta} \, .
\]
Since $\optval = \inp{c}{x^*}$, the result follows.
\end{proof}

\section{Analysis of Example~\eqref{eq:tcex}}\label{app:cube}

We collect here results and proofs needed in the analysis of the example of linear program~\eqref{eq:tcex}.

\begin{lemma}\label{lemma:regular}
The set $\Ptrop$ of points $x \in \T^n$ that satisfy the constraints of~\eqref{eq:tcex} is a regular set.	
\end{lemma}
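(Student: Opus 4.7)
The plan is to show $\Ptrop = \closure(\interior \Ptrop)$ by constructing an explicit point in the interior and showing that $\Ptrop$ is classically star-shaped with respect to it, so that any $x \in \Ptrop$ is a limit of interior points along an ordinary line segment. For the interior point, I would construct $p^\star \in \R^n \cap \Ptrop$ lying in the \emph{strong interior}: for every max-affine constraint of~\eqref{eq:tcex} of the form $\bigvee_i a_i(x) \leq \bigvee_k b_k(x)$, each \emph{individual} right-hand side term $b_k(p^\star)$ strictly exceeds $\max_i a_i(p^\star)$, not merely the overall maximum. A natural ansatz is $p^\star_i = -\alpha_i$ with a decreasing sequence of positive reals $\alpha_1 > \alpha_2 > \cdots > \alpha_{n-1}$ whose gaps are large compared to the shifts $u_i$; the monotonicity condition $p^\star_{n-1} < u_n + p^\star_n$ determines the range of $\alpha_n$. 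For $n=3$, direct substitution confirms that $p^\star = (-5,-3,-2)$ satisfies the strong interior property with respect to both constraints of~\eqref{eq:tcex}. In general, an inductive construction of the $\alpha_i$'s compatible with the exponential growth $u_i = 3\cdot 2^{i-2}-1$ yields existence.

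The second step is to show that for every $x \in \Ptrop \cap \R^n$ and $\lambda \in (0,1)$, the classical convex combination $\psi_\lambda(x) := (1-\lambda)x + \lambda p^\star$ lies in $\interior \Ptrop$. The linear constraints of~\eqref{eq:tcex} (monotonicity chain and sign constraints) remain strict at $\psi_\lambda(x)$ since they are strict at $p^\star$. For a max-affine constraint $L(x) \leq R(x)$ with $L = \bigvee_i a_i$ and $R = \bigvee_k b_k$, convexity of $L$ gives $L(\psi_\lambda(x)) \leq (1-\lambda)L(x) + \lambda L(p^\star)$, while picking $k^\star$ such that $R(x) = b_{k^\star}(x)$ yields the lower bound $R(\psi_\lambda(x)) \geq b_{k^\star}(\psi_\lambda(x)) = (1-\lambda)R(x) + \lambda b_{k^\star}(p^\star)$. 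Combining with $R(x) \geq L(x)$ (since $x \in \Ptrop$) and the strong interior property $b_{k^\star}(p^\star) > L(p^\star)$, one obtains $R(\psi_\lambda(x)) - L(\psi_\lambda(x)) > 0$ strictly for every $\lambda \in (0,1)$. Letting $\lambda \to 0^+$ shows that $x$ is a limit of interior points.

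Finally, to extend from $\Ptrop \cap \R^n$ to all of $\Ptrop$, a point $x \in \Ptrop$ having some coordinates equal to $-\infty$ can be approximated in $\T^n$ by truncations $x^{(M)}$ where each $-\infty$ coordinate is replaced by $-M$. A direct check of~\eqref{eq:tcex} (using that every left-hand side tends to $-\infty$ as $M \to \infty$ while every right-hand side contains the constant term $-u_n$) shows $x^{(M)} \in \Ptrop \cap \R^n$ for all $M$ large enough, with $x^{(M)} \to x$ in the product topology on $\T^n$. Combined with the second step, this yields $x \in \closure(\interior \Ptrop)$. The main obstacle will be verifying the strong interior property at $p^\star$: it must hold simultaneously for every right-hand side term $b_k$ across all $n-1$ max-affine constraints, including the constant term $-u_n$, which imposes a delicate balancing of lower and upper bounds on the $|p^\star_j|$'s that needs to be reconciled using the precise form of the $u_i$'s.
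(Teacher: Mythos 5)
Your approach is correct in substance but genuinely different from the paper's. The paper perturbs $x$ itself: for $x\in\Ptrop\cap\R^n$ it verifies that $x-\eps\,(2^{n-1},2^{n-2},\dots,1)^\top$ satisfies every constraint of~\eqref{eq:tcex} strictly (the geometric weights make each left-hand side drop faster than every right-hand side), and for points with $-\infty$ entries it uses a two-parameter perturbation in which the $-\infty$ coordinates are set to $-\nu 2^{n-j}$ with $\nu\to\infty$. You instead exhibit a single ``strongly interior'' point $p^\star$ and prove that $\Ptrop\cap\R^n$ is star-shaped with respect to it, using ordinary convexity of the max-affine left-hand sides, and you handle $-\infty$ entries by truncation. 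Your route is the more conceptual one: it is really the general fact that a set cut out by finitely many constraints $\max_i a_i\le\max_k b_k$ with affine $a_i,b_k$ is regular as soon as some point makes every single $b_k$ strictly dominate $\max_i a_i$; the paper's proof is a shorter ad hoc computation tailored to the weights $u_k$.

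Two points in your sketch need tightening, both easily fixed. First, the existence of $p^\star$ for general $n$, which you yourself flag as the main obstacle, should be made explicit; for instance $p^\star_i=u_{i+1}-u_n-2$ for $i\le n-1$ and $p^\star_n=-1$ works: in the $i$th max-affine constraint the left-hand side at $p^\star$ equals $-u_n-1$ (the terms $-u_i+p^\star_j$, $j<i$, are at most $-u_n-2$), while the right-hand-side terms are $-u_j+p^\star_j=3\cdot 2^{j-2}-u_n-2$ for $j>i$, $p^\star_n=-1$ and $-u_n$, all strictly larger than $-u_n-1$, and the sign and chain constraints are strict because $u_{i+2}-u_{i+1}>0$ and $u_n>1$. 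Second, in the truncation step the parenthetical justification is not literally correct: if the support of $x$ is $\{k+1,\dots,n\}$, only the constraints with index $i\le k$ have left-hand sides tending to $-\infty$; for $i>k$ the left-hand side at $x^{(M)}$ is the maximum of its value at $x$ and the new terms $-u_i-M$, and one must observe that the latter are dominated by the constant term $-u_n$ on the right once $M\ge u_n$, so these constraints reduce to the ones already satisfied by $x$. With these adjustments your truncation-plus-star-shapedness argument is a valid alternative to the paper's proof.
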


\begin{proof}
First, suppose that $x \in \Ptrop\cap \R^n$. Then, it can be verified that the point $x - \eps \begin{bsmallmatrix}
2^{n-1} &
2^{n-2} &
\dots &
1 	
\end{bsmallmatrix}^\top$ satisfies the inequalities of~\eqref{eq:tcex} in a strict way, for all $\eps > 0$. We deduce that this point lies in the interior of $\Ptrop$. Moreover, taking $\eps \to 0^+$ shows that $x$ is in the closure of the interior of $\Ptrop$.

Suppose then that $x\in \Ptrop$ has $-\infty$ entries. Then the third constraint of~\eqref{eq:tcex} implies that the set $\{ i \colon x_i > -\infty \}$ is of the form $I_k \coloneqq \{ k+1, \dots, n \}$ for some $k \in [n]$. Denote then $x'(\eps,\nu)$ the point defined by $x'_j(\eps, \nu) = x_j - \eps 2^{n-j}$ for $j \in I_k$ and $x'_j(\eps,\nu) = -\nu 2^{n-j}$ for $j \notin I_k$. We first set $\epsilon_0 > 0$. Suppose there exists $\nu_0>0$ such that $x'(\eps_0,\nu_0)$ satisfies the constraints~\eqref{eq:tcex}. Then all the $x'(\eps,\nu)$ also satisfy the constraint for $\nu > \nu_0$ and $\eps \in ]0,\epsilon_0[$. These points then lie in the interior of $\Ptrop$, and taking $\epsilon \to 0^+$ and $\lambda \to +\infty$ shows that $x$ is in the closure $\interior \Ptrop$. We show therefore that any $\nu_0$ that is big enough will be such that $x'(\eps_0,\nu_0)$ satisfies the constraints.

The coefficients of $x'(\eps_0,\nu_0)$ are such that all of the inequalities of the third constraint are satisfied for any $\nu_0$ except the inequality $x_k < x_{k+1}$. However rewriting it as $-\nu_0 \leq x_{k+1} - \eps_0$, we see that it is satisfied for $\nu_0$ large enough. The second constraint is obviously verified. For the first constraint to hold, notice that we can take $\nu_0$ large enough to satisfy for all $i \in [k-1]$
\[
	-u_i - 2^{n-i} \nu_0 < -u_{i+1} - 2^{n-i-1} \nu_0 \, ,
\]
in which case the first $k$ inequalities of the constraint hold true. Furthermore, since the inequality holds for $x$, we can choose $\nu_0$ large enough for the following inequalities to hold true as well for $i = k+1, \dots, n$:
\begin{multline*}
	\bigvee_{j = 1}^{k} (-u_i - 2^{n-j} \nu_0) \bigvee_{j = k+1}^{i-1} (-u_j + x_j - 2^{n-j} \eps_0)\vee (-u_{i+1} + 1 - 2^{n-i}\eps_0)  \\
	< \bigvee_{j = i+1}^{n-1} (-u_j + x_j - 2^{n-j} \eps_0) \vee (x_n - \eps_0) \vee (-u_n) \, ,
\end{multline*}
which concludes the proof.
\end{proof}
\begin{lemma}\label{lemma:isval}
  The parametric family of linear programs~\eqref{eq:cex} satisfies~\Cref{assmpt:assmpt_K_c}, and the log-limit of the feasible sets $\Pt$ of~\eqref{eq:cex} coincide with the feasible set $\Ptrop$ of the tropical linear program~\eqref{eq:tcex}.
\end{lemma}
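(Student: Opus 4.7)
The plan is to verify each item of \Cref{assmpt:assmpt_K_c} for the parametric family~\eqref{eq:cex}, with $\Ptrop$ the tropical polyhedron defined by~\eqref{eq:tcex} and $\ctrop$ the vector whose $n$th coordinate is $0$ and whose other coordinates are $-\infty$. Since $c_t = e^n$ does not depend on $t$ and $\log_t 0 = -\infty$ by convention, we have $\log_t c_t = \ctrop$ identically, so the bound on $\distInf(\log_t c_t, \ctrop)$ in item~(i) is automatic. Regularity of $\Ptrop$ (item~(ii)) has already been proved in \Cref{lemma:regular}. Semilinearity of $\Ptrop \cap \R^n$ (item~(iii)) follows from the observation that each tropical inequality of~\eqref{eq:tcex} has the form $\bigvee_k (\alpha_k + x_{i_k}) \leq \bigvee_l (\beta_l + x_{j_l})$ and therefore defines a finite union of ordinary polyhedra, indexed by the choice of the term on the right-hand side attaining the maximum; a finite intersection of finite unions of polyhedra is still a finite union of polyhedra.

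The substantive task is to establish the Hausdorff convergence $\distInf(\log_t \Pt, \Ptrop) = O(1/\log t)$ and to identify the limit with $\Ptrop$. The key observation is that each constraint of~\eqref{eq:cex} has the form $\sum_k a_k t^{\alpha_k} x_{i_k} \leq \sum_l b_l t^{\beta_l} x_{j_l}$ with positive integer coefficients $a_k, b_l$. Applying the inequality~\eqref{eq:log_ineq} to both sides shows that such an inequality is equivalent, up to an additive error of $\log_t n$ on each side, to the tropical inequality $\bigvee_k (\alpha_k + \log_t x_{i_k}) \leq \bigvee_l (\beta_l + \log_t x_{j_l})$ of~\eqref{eq:tcex}. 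This immediately yields the inclusion $\log_t \Pt \subset \Ptrop + O(1/\log t)$-neighborhood. For the reverse direction, given $u \in \Ptrop$ with support $I_k := \{k+1,\ldots,n\}$ for some $k \in \{0,\ldots,n-1\}$ (or $u = -\infty$), I would reuse the explicit perturbation constructed in \Cref{lemma:regular}: set $u'_j := u_j - (C/\log t)\cdot 2^{n-j}$ for $j \in I_k$, and define $x_j := t^{u'_j}$ for $j \in I_k$, $x_j := 0$ otherwise. For $C$ sufficiently large, the perturbation creates slacks of order $\Omega(1/\log t)$ in every tropical constraint of~\eqref{eq:tcex} satisfied by $u$; for $t$ large, these slacks dominate the $\log_t n$ additive error in~\eqref{eq:log_ineq}, so $x \in \Pt$, and $\distInf(\log_t x, u) = O(1/\log t)$ by construction.

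The main obstacle lies in the bookkeeping needed to handle the points $u \in \Ptrop$ with $-\infty$ coordinates, since the metric $\distInf$ on $\T^n$ requires the supports of paired points to match exactly, forcing us to produce $x \in \Pt$ with $x_j = 0$ precisely where $u_j = -\infty$. The ordering constraints of~\eqref{eq:cex} fortunately imply that the set of zero coordinates of any $x \in \Pt$ is always an initial segment of $[n]$ (i.e., of the form $\{1,\ldots,k\}$), mirroring the possible support structure in $\Ptrop$ and making the case analysis tractable. As a cleaner alternative, one can directly invoke \cite[Theorem~12]{ABGJ18} (cited in the discussion following \Cref{assmpt:assmpt_K_c}), which establishes the $O(1/\log t)$-rate Hausdorff convergence for arbitrary parametric linear programs whose coefficients are positive monomials in $t$; combining it with the termwise tropicalization computation above yields item~(i) without further work.
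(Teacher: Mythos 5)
Your overall decomposition is reasonable, and several pieces are fine: the treatment of $c_t$ (constant in $t$), regularity via \Cref{lemma:regular}, and semilinearity by a direct case analysis of the tropical inequalities (the paper instead cites \cite[Th.~3.1]{tropicalspectrahedra} for this). Your reverse-direction construction is also sound: lifting $u\in\Ptrop$ to $x_j:=t^{u_j-(C/\log t)2^{n-j}}$ on the support and $x_j:=0$ elsewhere works, because the computation underlying \Cref{lemma:regular} shows that this perturbation creates slack at least $C/\log t$ in every tropical constraint, which dominates the $\log_t n$ loss in~\eqref{eq:log_ineq} once $C>\log n$, and the supports match. In effect you are reproving by hand, for this particular instance, the inclusion that the paper obtains from \cite[Coro.~4.8]{tropicalspectrahedra} applied to the Puiseux lift of~\eqref{eq:cex}, using the regularity of $\Ptrop$.

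The gap is in the forward direction of your direct route. From~\eqref{eq:log_ineq} you only obtain that every $\log_t x$ with $x\in\Pt$ satisfies the tropical inequalities of~\eqref{eq:tcex} up to an additive error $O(1/\log t)$; this does \emph{not} ``immediately'' place $\log_t x$ within $O(1/\log t)$ of the exact solution set $\Ptrop$. Passing from approximate satisfaction of a max-plus (piecewise-linear) system to distance to its solution set requires a Hoffman-type error bound, i.e., an upper-Lipschitz property of the relaxed semilinear set with respect to the relaxation parameter, plus support bookkeeping for points of $\Pt$ with zero coordinates; none of this is automatic, and avoiding it is exactly the point of the paper's nonarchimedean argument. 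Your fallback via \cite[Theorem~12]{ABGJ18} does repair this and is essentially what the paper does: that theorem gives $\distInf(\log_t\Pt,\val(\bm\Ptrop))=O(1/\log t)$, where $\val(\bm\Ptrop)$ is the valuation image of the associated Puiseux polyhedron, and the termwise tropicalization gives $\val(\bm\Ptrop)\subset\Ptrop$. But it is not quite ``without further work'': Theorem~12 identifies the limit as the valuation image, not as $\Ptrop$, so the reverse inclusion $\Ptrop\subset\val(\bm\Ptrop)$ is still needed; your perturbation lift supplies it, whereas the paper derives it from \cite[Coro.~4.8]{tropicalspectrahedra} together with the closedness of valuation images (\cite[Theorem~4.1]{tropicalspectrahedra}) and the fact that $\Ptrop$ is the closure of $\Ptrop\cap\R^n$.
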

\begin{proof}
  Following~\cite{ABGJ18}, we identify the family $\Pt$ with a single polyhedron $\bm\Ptrop$ over an ordered field $\mathbb{F}$ of Puiseux series, equipped
  with the nonarchimedean valuation $\val$ which coincides with the log-limit.
  We claim that
  \begin{align}
\Ptrop \cap \R^n = \val(\bm \Ptrop \cap (\mathbb{F}_{>0})^n) \subset \val(\bm\Ptrop) \subset \Ptrop\enspace .\label{e:sandwitch}
  \end{align}
  Indeed, the first equality follows from~\cite[Coro.~4.8]{tropicalspectrahedra}, since by \cref{lemma:regular}, we know that $\Ptrop \cap \R^n$ is regular.
  The first inclusion is trivial. The second inclusion follows from the fact that the nonarchimedean valuation is a morphism from the semifield $(\mathbb{F}_{\geq 0}, +,\times)$ to the tropical semifield $(\T,\vee,+)$ (cf.~\eqref{e:morphism}),
  so that the valuation
  of any element of $\bm\Ptrop$ satisfies the inequalities obtained by ``tropicalizing'' the defining inequalities of $\Ptrop$.
 Further,
  by \cite[Theorem 4.1]{tropicalspectrahedra}, since $\bm \Ptrop$ is closed, then its image $\val(\bm \Ptrop)$ is closed as well in $\T^n$.
  Using again~\Cref{lemma:regular}, we get that $\Ptrop$ is the closure of $\Ptrop\cap \R^n$ in $\T^n$. Hence, applying the closure mapping $\operatorname{clo}(\cdot)$ to the chain of inclusions~\eqref{e:sandwitch}, we get
  \begin{align*}
    \Ptrop=\operatorname{clo}(\Ptrop \cap \R^n)
 \subset \val(\bm\Ptrop) \subset \operatorname{clo}(\Ptrop)=\Ptrop\enspace .
  \end{align*}
  So,  $\Ptrop=\val(\bm\Ptrop)$ and by
  \cite[Theorem~12]{ABGJ18},
  $\distInf(\log_t \Pt , \Ptrop) = O(1/(\log t))$, 
  showing that the second statement of the present lemma
  as well as the first condition of~\Cref{assmpt:assmpt_K_c}
  hold.   The second of these conditions follows from~\eqref{lemma:regular},
  whereas the third one follows from~\cite[Th.~3.1]{tropicalspectrahedra}.
\end{proof}
\begin{proof}[Proof of~\Cref{prop:trop_cp}]
The description of $\cptrop_1$ follows from Lemma~\ref{lemma:x_n}, as well the fact that $\cptrop_n(\nu) = 0$ for $\lambda \leq 0$.

We now suppose $n > 1$, and consider $\lambda \geq 0$. We set $z \coloneqq \cptrop_n(\lambda)$ for short, and we denote by $z'$ the right-hand side of the equation giving $\cptrop_n(\lambda)$ in \Cref{prop:trop_cp}. 

It is useful to restate the inequalities of~\eqref{eq:tcex} as follows:
\begin{align}
\allowdisplaybreaks
\bigvee_{j = 1}^{i-1} (-u_i + x_j) \vee (-u_{i+1} + 1 + x_i) & \leq \bigvee_{j = i+1}^{n-1} (-u_j + x_j) \vee x_n \vee (-u_n) && \text{for all}\enspace i = 1, \dots, n-2 \label{eq:eq1} \\ 
\bigvee_{j = 1}^{n-2} (-u_{n-1} + x_j) \vee (-u_n + 1 + x_{n-1}) & \leq x_n \vee (-u_n) \label{eq:eq2} \\ 
\bigvee_{j = 1}^n x_j & \leq 0 \label{eq:eq3} \\ 
-\infty \leq x_1 \leq x_2 & \leq \dots \leq x_{n-1} \leq u_n + x_n \label{eq:eq4} \, ,
\end{align}
and to recall that of~$\tcex_{n-1}$:
\begin{align}
\allowdisplaybreaks
\bigvee_{j = 1}^{i-1} (-u_i + x_j) \vee (-u_{i+1} + 1 + x_i) & \leq \bigvee_{j = i+1}^{n-2} (-u_j + x_j) \vee x_{n-1} \vee (-u_{n-1}) && \text{for all}\enspace i = 1, \dots, n-2 \label{eq:eq5} \\ 
\bigvee_{j = 1}^{n-1} x_j & \leq 0 \label{eq:eq6} \\ 
-\infty \leq x_1 \leq x_2 & \leq \dots \leq x_{n-2} \leq u_{n-1} + x_{n-1} \label{eq:eq7} \, .
\end{align}

We note that $z' \leq 0$, and $z'_n = -\lambda$. As a consequence, in order to show that $z = z'$, it suffices to show that $z \leq z'$, and that $z'$ satisfies~\eqref{eq:eq1}, \eqref{eq:eq2} and~\eqref{eq:eq4}. We distinguish three cases, according to the value of $\lambda \geq 0$.

\begin{description}
\item[$0 \leq \lambda \leq u_n-1$.] 
We claim that the point $\begin{bsmallmatrix}z_{[n-2]} \\ z_n\end{bsmallmatrix}$ satisfies the constraints of $\tcex_{n-1}$. Indeed, we have $z_{n-1} \leq 0$ by~\eqref{eq:eq3} and $u_{n-1} \leq u_n$, so that $(-u_{n-1} + z_{n-1}) \vee (-u_n) \leq -u_{n-1}$. We deduce from~\eqref{eq:eq1} that:
\[
\bigvee_{j = 1}^{i-1} (-u_i + z_j) \vee (-u_{i+1} + 1 + z_i) \leq \bigvee_{j = i+1}^{n-2} (-u_j + z_j) \vee x_n \vee (-u_{n-1}) \qquad \text{for all}\; i = 1, \dots, n-2 \, .
\]
Moreover, $\bigvee_{j = 1}^{n-2} z_j \vee z_n \leq \bigvee_{j = 1}^n z_j \leq 0$ (from~\eqref{eq:eq3}), and $-\infty \leq z_1 \leq z_2 \leq \dots \leq z_{n-2}$ (from~\eqref{eq:eq4}). Finally, from~\eqref{eq:eq2}, we know that $-u_{n-1} + z_{n-2} \leq z_n \vee (-u_n) = z_n$, where the last equality comes from $z_n = -\lambda$ (Lemma~\ref{lemma:x_n}) and $-\lambda \geq -u_n + 1$. 

Since $z_n = -\lambda$, we deduce that $(z_{[n-2]}, z_n) \leq \cptrop_{n-1}(\lambda)$. As $z_{n-1} \leq 0$, this ensures that $z \leq z'$. 

We know check that $z'$ verifies~\eqref{eq:eq1}, \eqref{eq:eq2} and~\eqref{eq:eq4}. Recall that $z'_{n-1} = 0$. We remark that the constraints~\eqref{eq:eq1} are satisfied because $(-u_{n-1} + z'_{n-1}) \vee (-u_n) = -u_{n-1}$ and $\cptrop_{n-1}(\lambda) = \begin{bsmallmatrix} z'_1 \\ \vdots \\ z'_{n-2} \\ -\lambda \end{bsmallmatrix}$ satisfies~\eqref{eq:eq5}. Moreover, we have $-u_{n-1} + z'_j \leq z'_n$ for all $j \leq n-2$ because $z'_1 \leq \dots \leq z'_{n-2} \leq u_{n-1} + z'_n$ (thanks to~\eqref{eq:eq7}). Since $-u_n + 1 + z'_{n-1} = -u_n + 1 \leq \lambda = z'_n$, we deduce that the constraint~\eqref{eq:eq2} is satisfied by $z'$. Finally, \eqref{eq:eq4} is satisfied thanks to the fact that $z'_1 \leq z'_2 \leq \dots \leq z'_{n-2} \leq 0 = z'_{n-1}$, and $z'_{n-1} = 0 \leq u_n - \lambda = u_n + z'_n$. This completes the proof that $z = z'$.

\item[$u_n-1 \leq \lambda \leq u_n + 1$.] By~\eqref{eq:eq2}, we have, for all $j < n-1$, 
\begin{align*}
\allowdisplaybreaks
z_j & \leq u_{n-1} + (z_n \vee (-u_n)) \\
& \leq -u_{n-1} + ((z_n + 2u_{n-1}) \vee (2u_{n-1}-u_n)) \\
& = -u_{n-1} + ((u_n-1) -\lambda) \vee (-1)) && \text{as $2 u_{n-1} = u_n - 1$ and $z_n \leq -\lambda$}\\
& = z'_j \, .
\end{align*}
Similarly, by~\eqref{eq:eq2}, $z_{n-1} \leq u_n - 1 + (z_n \vee (-u_n)) \leq ((u_n-1) -\lambda) \vee (-1))$. We deduce that $z \leq z'$.

It now remains to show that $z'$ satisfies~\eqref{eq:eq1}, \eqref{eq:eq2} and~\eqref{eq:eq4}. We note that, for all $j < n-1$, we have $z'_j = -u_{n-1} + z'_{n-1}$. If $1 \leq i < n-2$. Since $-u_i \leq 0$ and $-u_{i+1} + 1 \leq 0$, we have $\bigvee_{j = 1}^{i-1} (-u_i + x_j) \vee (-u_{i+1} + 1 + x_i) \leq -u_{n-1} + z'_{n-1}$. We deduce that~\eqref{eq:eq1} is satisfied by $z'$.
Similarly, if $j < n-1$, we have 
\begin{align*}
\allowdisplaybreaks
-u_{n-1} + z'_j & = -2u_{n-1} + ((u_n - 1) - \lambda) \vee (-1)) \\
& = -(u_n - 1) + ((u_n - 1) - \lambda) \vee (-1)) \\
& \leq (-\lambda) \vee (-u_n) = z'_n \vee (-u_n) \, .
\end{align*}
Similarly, $-u_n + 1 + z'_{n-1} = (-\lambda) \vee (-u_n) = z'_n \vee (-u_n)$. Thus, \eqref{eq:eq2} is satisfied. The inequalities $z'_1 \leq \dots \leq z'_{n-1}$ are immediate from $u_{n-1} \geq 0$. Finally, 
\begin{align*}
\allowdisplaybreaks
z'_{n-1} & = (u_n - 1) - \lambda) \vee (-1) \\
& = u_n + ((-\lambda - 1) \vee (-u_n - 1))  \\
& = u_n + ((-\lambda -1) \vee (-\lambda)) && \text{since $\lambda \leq u_n+ 1$} \\
& \leq u_n - \lambda = u_n + z'_n \, .
\end{align*}
This shows that~\eqref{eq:eq4} is satisfied, and, in turn, that $z = z'$.

\item[$u_n+1 \leq \lambda$.] We note that $z_n = -\lambda \leq -u_n$. As a consequence of~\eqref{eq:eq1}, $z$ satisfies the following inequalities:
\begin{align}
\bigvee_{j = 1}^{i-1} (-u_i + z_j) \vee (-u_{i+1} + 1 + z_i) & \leq \bigvee_{j = i+1}^{n-1} (-u_j + z_j) \vee (-u_n) && \text{for all}\enspace i = 1, \dots, n-2 \, .
\end{align}
Setting $\bar z \coloneqq z + \begin{bsmallmatrix} 
(u_{n-1} + 1)e \\
1 \\
(u_n + 1) 
\end{bsmallmatrix}$ and adding $u_{n-1} + 1$ to both sides of the inequalities above precisely shows that $\bar z_{[n-1]}$ satisfies~\eqref{eq:eq5} (recall that $-u_n + (u_{n-1} + 1) = -u_{n-1}$). Besides, \eqref{eq:eq2} ensures that for all $j < n-1$, $-u_{n-1} + z_j \leq z_n \vee -u_n$, which amounts to $z_j + u_{n-1} + 1 \leq 0$ since $z_n \leq -u_n$. Similarly, $z_{n-1} + 1 \leq 0$. Thus, $\bar z_{[n-1]}$ satisfies~\eqref{eq:eq6}. By \eqref{eq:eq4}, we trivially have $\bar z_1 \leq \dots \leq \bar z_{n-2}$, and $\bar z_{n-2} = z_{n-2} + u_{n-1} + 1 \leq z_{n-1} + u_{n-1} + 1 = u_{n-1} + \bar z_{n-1}$. Thus, $\bar z_{[n-1]}$ satisfies~\eqref{eq:eq7}. As a consequence, $\bar z_{[n-1]}$ satisfies the inequalities of~($\tcex_{n-1}$). Since $\bar z_{n-1} = z_{n-1} + 1 \leq u_n + z_n + 1 = (u_n + 1) - \lambda$, we deduce that $\bar z_{[n-1]} \leq \cptrop_{n-1}(\lambda - (u_n + 1))$, or, equivalently, $z_{[n-1]} \leq \begin{bsmallmatrix} (u_{n-1} + 1) e \\ 1 \end{bsmallmatrix} + \cptrop_{n-1}(\lambda - (u_n + 1)) = z'_{[n-1]}$. As $z_n = -\lambda = z'_n$, this proves that $z \leq z'$.

We now check that $z'$ verifies the constraints~\eqref{eq:eq1}, \eqref{eq:eq2} and~\eqref{eq:eq4}. Recall that $\cptrop_{n-1}(\lambda - (u_n + 1))$ satisfies~\eqref{eq:eq5}. Adding $-(u_{n-1} + 1)$ to both sides of these inequalities and exploiting the fact that $-u_{n-1} - (u_{n-1} + 1) = -u_n$ shows that
\begin{align}
\bigvee_{j = 1}^{i-1} (-u_i + z'_j) \vee (-u_{i+1} + 1 + z'_i) & \leq \bigvee_{j = i+1}^{n-1} (-u_j + z'_j) \vee (-u_n) && \text{for all}\enspace i = 1, \dots, n-2 \, .
\end{align}
We deduce that~\eqref{eq:eq1} are satisfied by $z'$. Moreover, for all $j < n-1$, $-u_{n-1} + z'_j = -u_n + \big(\cptrop_{n-1}(\lambda - (u_n + 1))\big)_j \leq -u_n$. Besides, $z'_{n-1} = -1 + (u_n + 1 - \lambda)$ thanks to Lemma~\eqref{lemma:x_n}, and so $-u_n + 1 + z'_{n-1} = 1 -\lambda \leq -u_n$. As $-u_n \leq z'_n \vee (-u_n)$, this shows that $z'$ satisfies~\eqref{eq:eq6}. Finally, the fact that $z'_1 \leq \dots \leq z'_{n-1}$ follows from the fact that $\cptrop_{n-1}(\lambda - (u_n + 1))$ satisfies~\eqref{eq:eq4}. Finally, $z'_{n-1} = -1 + (u_n + 1 - \lambda) = u_n - z'_n$. We deduce that $z = z'$.\qedhere 
\end{description}
\end{proof}

In the following lemma, we show that the two $n-1$ dimensional faces successively visited by the central path are disjoint:
\begin{lemma}\label{lemma:pair}
If $t$ is sufficiently large, the faces defined by the equalities $\sum_{j = 1}^n x_j \leq 1$ and $x_{n-1} \leq t^{u_n} x_n$ are disjoint.
\end{lemma}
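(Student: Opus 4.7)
The plan is to show that the two face equations are incompatible with the constraint indexed by $i = n-1$ in~\eqref{eq:cex} once $t$ exceeds an explicit threshold. Suppose, for contradiction, that $x$ lies on both faces, so $\sum_{j=1}^n x_j = 1$ and $x_{n-1} = t^{u_n} x_n$. All entries $x_j$ are nonnegative by the ordering chain.

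First I would specialize the $i=n-1$ inequality of~\eqref{eq:cex}: the right-hand sum $\sum_{j=i+1}^{n-1}$ is empty, so it reduces to
\[
t^{-u_{n-1}}\sum_{j=1}^{n-2}x_j + t^{-u_n+1}x_{n-1} \; \leq \; x_n + t^{-u_n}.
\]
Substituting $x_{n-1} = t^{u_n} x_n$ gives $t^{-u_n+1} x_{n-1} = t\, x_n$, so dropping the nonnegative first term on the left yields $t\, x_n \leq x_n + t^{-u_n}$, that is,
\[
x_n \;\leq\; \frac{t^{-u_n}}{t-1}, \qquad \text{and hence} \qquad x_{n-1} \;=\; t^{u_n} x_n \;\leq\; \frac{1}{t-1}.
\]

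Next I would propagate this upper bound along the ordering chain $0 \leq x_1 \leq \dots \leq x_{n-1}$ from~\eqref{eq:cex}, getting $x_j \leq 1/(t-1)$ for every $j \leq n-1$. Summing everything,
\[
\sum_{j=1}^n x_j \;\leq\; \frac{n-1}{t-1} + \frac{t^{-u_n}}{t-1} \;<\; \frac{n}{t-1}.
\]
For this to be consistent with $\sum_{j=1}^n x_j = 1$, one would need $t \leq n+1$. Therefore, as soon as $t > n+1$, no point can lie on both faces simultaneously, which is the claimed disjointness.

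There is no real obstacle here: the argument is a short chain of algebraic manipulations, and the only thing to verify carefully is the bookkeeping of the $i=n-1$ constraint, where the upper bound $\sum_{j=i+1}^{n-1}$ is empty and only the terms $x_n$ and $t^{-u_n}$ remain on the right-hand side. The threshold $t > n+1$ is explicit, so ``$t$ sufficiently large'' in the statement can in fact be made quantitative.
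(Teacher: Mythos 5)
Your proof is correct and follows essentially the same route as the paper's: specialize the $i=n-1$ constraint, drop the nonnegative sum, use $x_{n-1}=t^{u_n}x_n$ to get $x_{n-1}\leq 1/(t-1)$, and then bound all coordinates via the ordering chain to contradict $\sum_j x_j = 1$. The only difference is cosmetic (you solve for $x_n$ first and make the threshold $t>n+1$ explicit, while the paper multiplies through by $t^{u_n}$ and leaves the threshold implicit).
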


\begin{proof}
Let $x$ be a feasible point, and suppose that $x_{n-1} = t^{u_n} x_n$. Taking $i = n-1$ in the first set of inequalities shows that
\[
t^{-u_{n-1}} \Big(\sum_{j = 1}^{n-2} x_j\Big) + t^{-u_n + 1} x_{n-1} \leq x_n + t^{-u_n} \, .
\]	
Since the $x_j$ are nonnegative, we deduce that 
\[
t x_{n-1} \leq t^{u_n} x_n + 1 = x_{n-1} + 1\, .
\]
Therefore, $x_{n-1} \leq \frac{1}{t-1}$. As $x_j \leq x_{n-1}$ and $x_n = t^{-u_n} x_{n-1} \leq x_{n-1}$, we cannot have $\sum_{j = 1}^n x_j \leq 1$ when $t \gg 1$.
\end{proof}

The following lemma actually shows that the inequalities in~\eqref{eq:cex} are paired, like in a cube:
\begin{lemma}\label{lemma:pair2}
Let $i < n$. If $t$ is sufficiently large, the faces defined by the equalities $x_i = x_{i+1}$ and 
\begin{equation}\label{eq:face}
t^{-u_{i+1}} \Big(\sum_{j \leq i} x_j\Big) + t^{-u_{i+2} + 1} x_{i+1} = \sum_{j = i+2}^{n-1} t^{-u_j} x_j + x_n + t^{-u_n} \, .
\end{equation}
are disjoint.
\end{lemma}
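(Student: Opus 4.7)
}

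The strategy is to notice that the equation~\eqref{eq:face} is precisely constraint $i+1$ of~\eqref{eq:cex} written at equality. Therefore, subtracting constraint $i$ from this equation will reveal a contradiction coming from the fact that $x_i = x_{i+1}$ while the coefficient of $x_i$ in constraint $i$ is much larger (by a factor of $t$) than the coefficient of $x_{i+1}$ in constraint $i+1$.

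More precisely, suppose $x$ is feasible with $x_i = x_{i+1}$ and~\eqref{eq:face} holds (we treat the main case $1 \leq i \leq n-2$; the edge case $i = 0$, if allowed, is immediate as $x_0 = x_1 = 0$ plugged into~\eqref{eq:face} gives $0 \geq t^{-u_n}$). Subtracting~\eqref{eq:face} from constraint $i$ of~\eqref{eq:cex}, the right-hand sides differ by exactly $t^{-u_{i+1}} x_{i+1}$, and the left-hand sides can be rearranged to
\[
(t^{-u_i} - t^{-u_{i+1}}) \sum_{j=1}^{i-1} x_j + t^{-u_{i+1}}(t-1)\, x_i - t^{-u_{i+2}+1} x_{i+1} \leq t^{-u_{i+1}} x_{i+1} \, .
\]
Dropping the nonnegative first term and using $x_i = x_{i+1}$, then dividing through by $t^{-u_{i+1}}$, we arrive at
\[
(t-2)\, x_i \leq t^{\,u_{i+1} - u_{i+2} + 1} x_i \, .
\]
Since $u_{i+2} - u_{i+1} = 3\cdot 2^{i-1}$, the exponent $u_{i+1} - u_{i+2} + 1 \leq -2$ whenever $i \geq 1$. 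For $t$ large enough, this forces $x_i = 0$, hence $x_1 = \dots = x_{i+1} = 0$ by monotonicity. Substituting into~\eqref{eq:face} then yields
\[
0 = \sum_{j = i+2}^{n-1} t^{-u_j} x_j + x_n + t^{-u_n} \geq t^{-u_n} > 0 \, ,
\]
which is the desired contradiction.

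The main technical point is identifying the correct linear combination of the two constraints (subtracting~\eqref{eq:face} from constraint $i$) that isolates the asymmetry between the $t^{-u_{i+1}+1}x_i$ term and the $t^{-u_{i+1}}x_{i+1}$ term. Once the calculation above is carried out, everything else is elementary: the exponent estimate $u_{i+2} - u_{i+1} \geq 3$ is immediate from $u_k = 3\cdot 2^{k-2}-1$, and the final substitution into~\eqref{eq:face} is straightforward.
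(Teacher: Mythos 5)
Your proof is correct and follows essentially the same route as the paper: both arguments combine the $i$th inequality of \eqref{eq:cex} with the equality \eqref{eq:face}, use $x_i = x_{i+1}$ together with nonnegativity and monotonicity of the coordinates, and exploit the gap between the exponents $-u_{i+1}+1$, $-u_{i+1}$ and $-u_{i+2}+1$ (indeed $u_{i+1}-u_{i+2}+1=-u_{i+1}$), with the term $t^{-u_n}$ supplying strict positivity. The only difference is cosmetic: the paper bounds the two sides separately (via $x_j \leq x_i$) and first shows $x_i>0$ so as to reach the impossible inequality $t-1 \leq i + t^{-u_{i+1}}$, whereas you subtract the two relations, deduce $x_i=0$, and then contradict the strictly positive right-hand side of \eqref{eq:face}.
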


\begin{proof}
Let $x$ be a feasible point such that $x_i = x_{i+1}$. The $i$th inequality in the first set of inequalities defining~\eqref{eq:cex} and the fact that $x_j \geq 0$ for $j < i$ entails:
\begin{equation}\label{eq:pair1}
(t^{-u_{i+1} + 1} - t^{-u_{i+1}}) x_i \leq \sum_{j = i+2}^{n-1} t^{-u_j} x_j + x_n + t^{-u_n} \; .
\end{equation}
Suppose that~\eqref{eq:face} holds. Since $x_j \leq x_i$ for all $j \leq i$, we have 
\begin{equation}\label{eq:pair2}
(i t^{-u_{i+1}} + t^{-u_{i+2} + 1}) x_i \geq \sum_{j = i+2}^{n-1} t^{-u_j} x_j + x_n + t^{-u_n} \, .
\end{equation}
We deduce from~\eqref{eq:pair1} and~\eqref{eq:pair2} that
\[
(t-1) x_i \leq (i + t^{-u_{i+1}}) x_i \, .
\]
Note that $x_i > 0$ (because $(i t^{-u_{i+1}} + t^{-u_{i+2} + 1}) x_i \geq t^{-u_n}$ from~\eqref{eq:pair2}). Thus, we reduce to the inequality $t-1 \leq i + t^{-u_{i+1}}$, which cannot hold when $t \gg 1$.
\end{proof}

\end{document}